\documentclass{article}
\usepackage{amsmath,amssymb,enumitem,multicol,amsthm,xcolor,hyperref}

\usepackage{enumitem}
\setlistdepth{9}

\newcommand{\begincases}{\begin{enumerate}[label*={\sc Case \arabic*},wide, labelwidth=!, labelindent=0pt]}

\newcommand{\begincasesa}{\begin{enumerate}[label={\sc Case ({\rm \roman*})},wide, labelwidth=!, labelindent=0pt]}
\newcommand{\beginsubcases}{\begin{enumerate}[label*= {\rm .\arabic*},wide, labelwidth=!, labelindent=0pt]}

\newlist{Cases}{enumerate}{9}
\setlist[Cases,1]{label={\sc Case {\rm \arabic*}},wide, labelwidth=!, labelindent=0pt}
\setlist[Cases,2]{label*= {\rm .\arabic*},wide, labelwidth=!, labelindent=0pt}
\setlist[Cases,3]{label*= {\rm .\arabic*},wide, labelwidth=!, labelindent=0pt}
\setlist[Cases,4]{label*= {\rm .\arabic*},wide, labelwidth=!, labelindent=0pt}
\setlist[Cases,5]{label*= {\rm .\arabic*},wide, labelwidth=!, labelindent=0pt}
\setlist[Cases,6]{label*= {\rm .\arabic*},wide, labelwidth=!, labelindent=0pt}
\setlist[Cases,7]{label*= {\rm .\arabic*},wide, labelwidth=!, labelindent=0pt}
\setlist[Cases,8]{label*= {\rm .\arabic*},wide, labelwidth=!, labelindent=0pt}
\setlist[Cases,9]{label*= {\rm .\arabic*},wide, labelwidth=!, labelindent=0pt}

\newlist{CasesA}{enumerate}{1}
\setlist[CasesA,1]{label={\sc Case {\rm (\alph*)}},wide, labelwidth=!, labelindent=0pt}


\newcommand{\sx}[2]{S_{#1}(#2)}
\newcommand{\nt}{\sigma}
\newcommand{\gt}{\Theta}
\newcommand{\constr}{\hat \ve_{\Om+1} }
\newcommand{\tv}[1]{(#1)_\vartheta}
\newcommand{\A}{\nt}
\newcommand{\B}{\nt}

\newcommand{\fs}[2]{ #1 [#2]}
\newcommand{\cfs}[2]{ #1 \{ #2\}}
\newcommand{\nfs}[2]{ \cfs #1   #2 }

\newcommand{\beginclaims}{\begin{enumerate}[label*={\sc Claim },wide, labelwidth=!, labelindent=0pt]}

\newcommand{\coeffs}[1]{{\rm C}(#1)}

\newcommand{\mc}[1]{ #1^* }

\newcommand{\mck}[2]{ #2^*}

\newcommand{\putaway}[1]{}


\newcommand{\ve}{\varepsilon}

\newcommand{\Om}{{\Omega}}

\newcommand{\al}{{\alpha}}

\newcommand{\be}{{\beta}}

\newcommand{\ga}{\gamma}

\newcommand{\de}{\delta}

\newcommand{\la}{\lambda}

\newcommand{\om}{\omega}

\newcommand{\vt}{{\vartheta}}

\newtheorem{theorem}{Theorem}[section]
\newtheorem{lemma}[theorem]{Lemma}
\newtheorem{claim}[theorem]{Claim}
\newtheorem{remark}[theorem]{Remark}
\newtheorem{definition}[theorem]{Definition}
\newtheorem{corollary}[theorem]{Corollary}

\newtheorem{proposition}[theorem]{Proposition}
\newtheorem{example}[theorem]{Example}

\begin{document}

\title{Fundamental sequences and fast-growing hierarchies for the Bachmann-Howard ordinal}
\author{David Fern\'andez-Duque and Andreas Weiermann}
\maketitle

\begin{abstract}
Hardy functions are defined by transfinite recursion and provide upper bounds for the growth rate of the provably total computable functions in various formal theories, making them an essential ingredient in many proofs of independence.
Their definition is contingent on a choice of fundamental sequences, which approximate limits in a `canonical' way.
In order to ensure that these functions behave as expected, including the aforementioned unprovability results, these fundamental sequences must enjoy certain regularity properties.

In this article, we prove that Buchholz's system of fundamental sequences for the $\vartheta$ function enjoys such conditions, including the Bachmann property.
We partially extend these results to variants of the $\vartheta$ function, including a version without addition for countable ordinals.
We conclude that the Hardy functions based on these notation systems enjoy natural monotonicity properties and majorize all functions defined by primitive recursion along $\vartheta(\ve_{\Om+1})$.
\end{abstract}

\section{Introduction}

It is an empirically observed phenomenon that natural arithmetical theories are well-ordered by consistency strength.
This makes it possible to measure said strength using ordinal numbers, a key objective in contemporary proof theory \cite{RathjenSynthese}.
Given a theory $T$, its proof-theoretic ordinal, which we may denote $\|T\|$, provides a wealth of information about its arithmetical consequences; in particular, $\|T\|$ can be used to characterize the provably total computable functions of $T$ in terms of transfinite recursion \cite{Buchholz91,FriedmanS95}.

To be precise, one can define a hierarchy $(H_\al)_{\al<\|T\|}$ of increasingly fast-growing functions bounding all provably total computable functions of $T$.
The precise growth rate of such functions depends on a system of {\em fundamental sequences,} which to each limit $\al<\|T\|$ assigns an increasing sequence $(\cfs\al n)_{n<\om}$ with $\al$ as its limit.
Fundamental sequences allow us to define recursion along $\al$ computably and, if these fundamental sequences satisfy some regularity properties, they indeed majorize the provably total computable functions of $T$ \cite{BCW}.
Fundamental sequences satisfying said regularity conditions are known as {\em regular Bachmann systems.}

In this sense, the ordinal
\[\varepsilon_0 =\sup \{\om,\om^\om,\om^{\om^\om}\ldots\}, \]
measures the proof-theoretic strength of Peano arithmetic \cite{Gentzen1936}.
Ordinals below $\varepsilon_0$ can be represented in terms of $0,+$ and the function $\xi\mapsto\om^\xi$.
If we wish to represent the proof-theoretic ordinal of stronger theories, we need a richer set of ordinal functions.
From a methodological perspective, it is convenient to have a rich enough system of ordinal notations to accommodate many familiar theories within a uniform scale.
A reasonable upper bound for the characterization of theories that are `not terribly strong' is the Bachmann-Howard ordinal, sometimes denoted $\psi(\ve_{\Om+1})$ \cite{Pohlers1981}, where $\Om$ is the first uncountable ordinal and $\psi$ represents its countable collapse.

In this paper we will instead represent this ordinal in terms of the $\vartheta$ function, originally introduced by Rathjen~\cite{RathjenFragments,RathjenKruskal}.
Buchholz has defined a system of fundamental sequences based on this notation \cite{BuchholzOrd}, and has also contemplated variants of the $\vartheta$ function relative to some set $\mathbb X$ club in $\Om$ \cite{BuchholzSurvey}.
In this paper we show that Buchholz's fundamental sequences indeed form a regular Bachmann system, and thus the associated fast-growing hierarchies can be used to bound the provably total computable functions of theories of strength up to and including the Bachmann-Howard ordinal.
We will present results in terms of arbitrary $\mathbb X$ when possible, but will focus especially on the cases when $\mathbb X$ is the class of additively indecomposable ordinals, giving rise to the standard $\vartheta$ function, as well as the case where $\mathbb X=1+\sf Ord$, which gives rise to a `slower' version we denote $\nt$.
As we will see, $\nt$ provides notations for the same ordinals as $\vartheta$, with the feature that addition is not needed for countable ordinals.
This may at first be surprising since addition-free systems are usually weaker (see e.g.~\cite{RathjenAddition}), but notations based on $\sigma$ do allow for addition between {\em uncountable} ordinals, avoiding this loss of strength.
We remark that there are other natural choices for $\mathbb X$, which we do not explore in detail; one canonical choice is to take $\mathbb X$ to be the class of $\varepsilon$-numbers, i.e.~ordinals $\alpha$ such that $\alpha=\omega^\alpha$, suitable for ordinal notation systems with $\xi \mapsto \omega^\xi$ as a primitive operation.

The layout of the paper is as follows.
Section~\ref{secOrd} reviews notations and fundamental sequences for ordinals below $\varepsilon_{\Omega+1}$ and Section~\ref{secParTheta} defines parametrized theta functions.
Section~\ref{secXLim} shows how to approximate values of $\Theta_\mathbb X$, setting the stage for defining fundamental sequences later.
Section~\ref{secRegularBS} discusses notions from~\cite{BCW} involving norms and fundamental sequences for countable ordinals, arriving at what we call {\em regular Bachmann systems.}
Section~\ref{secFS} then discusses Buchholz's fundamental sequences in the general setting of parametrized theta functions.
The results in Sections~\ref{secOrd}, \ref{secParTheta}, \ref{secXLim} and \ref{secFS} have appeared in \cite{BuchholzOrd,BuchholzSurvey} and are reproduced with Buchholz's permission.
Section~\ref{secThetaFun} compares the $\vartheta$ and $\nt$ functions and shows how to express one function in terms of the other, while Section~\ref{secBachmannP} shows that Buchholz's systems of fundamental sequences for both $\vartheta$ and $\sigma$ enjoy the Bachmann property.
The proofs in this section can be adapted to other parametrized theta functions by considering any extra cases that arise (e.g.~for the function $\xi\mapsto\omega^\xi$) assuming, of course, that the Bachmann property indeed holds.
Section~\ref{secNorms} discusses norms for ordinal notation systems in some generality and then applies them to $\vartheta$ and $\sigma$ to show that both are regular Bachmann systems (in fact, $\vartheta$ is {\em Cantorian}, which means that it treats $\omega$ exponentiation in the `standard' way), which is our main technical result.
Section~\ref{sectHardy} reviews Hardy functions and establishes, using results of~\cite{BCW}, that every primitive recursive function relative to the Bachmann-Howard ordinal is dominated by both the Hardy function based on $\vartheta$ and on $\sigma$.
Section~\ref{secConc} gives some concluding remarks.

We aim for an accessible and self-contained presentation, so we assume familiarity only with elementary ordinal arithmetic.

\section{Ordinals below $\ve_{\Om+1}$}\label{secOrd}


In this section, we review some basic notions from ordinal arithmetic.
We assume familiarity with ordinal addition, multiplication, and exponentiation.
The class of successor ordinals (those of the form $\eta+1$) will be denoted $\rm Succ$, while the class of limit ordinals is denoted $\rm Lim$. 
The predecessor of $\al$ will be denoted $\al-1$, when it exists.

Throughout the text, we will use normal forms for ordinals based on either $\om$ or $\Om$, the first uncountable ordinal.
Let $\kappa\in \{\om,\Om\}$ and $\xi>0$ be ordinals.
There exist unique ordinals $\al,\be,\ga$ with $\be<\kappa$ such that $ \xi=\kappa^\al\be+\ga$ and $\ga<\kappa^\al$.
This is the {\em $\kappa$-normal form of $\xi$.}
If $\kappa=\om$, then $\be$ is a finite number and we can replace the term $\om^\al\be$ by $\om^\al+\ldots +\om^\al$ ($\be$ times), leading to the usual Cantor normal form.
More generally, we say that an expression $\al+\be$ is in {\em Cantor normal form} if $\al+\be>\al'+\be $ for all $\al'<\al$.

\begin{remark}
In our presentation, $\al$, $\be$ and $\ga$ are regarded as ordinals rather than terms; formally, $\Om^\al\be+\ga$ should be regarded as a triple of ordinals $(\al,\be,\ga)$. Thus we do not include the standard clause that they should be inductively written in normal form.
We will work with ordinals rather than notation systems when possible, although the latter will be revisited in Section~\ref{secNorms}.
\end{remark}

The ordinal $\ve_{\Om+1}$ is defined as the least $\ve>\Omega$ such that $\ve=\om^\ve$.
If $\xi=\Om ^\al\be+\ga <\ve_{\Om+1}$ is in $\Om$-normal form, then we may also deduce that $\al<\xi$.
We may define a general version of fundamental sequences for ordinals in $\Om$-normal form, where each $\xi<\ve_{\Om+1}$ is approximated by a sequence of suitable cofinality.
These general fundamental sequences will be useful later to provide more standard fundamental sequences for countable ordinals.

\begin{definition}
We define fundamental sequences for ordinals below $\ve_{\Om+1}$ and $\theta<\Om$ recursively as follows.
\begin{enumerate}

\item $\fs 0\theta = \fs 1\theta = 0$,

\item $ \fs{( \Omega^\alpha\be + \ga )}{\theta} =  \Omega^\alpha\be + \fs{ \ga }{\theta} $ if $\Omega^\alpha\be + \ga$ is in $\Omega$-normal form and $\ga>0$,

\item $ \fs{( \Omega^\alpha\be  ) }{\theta} =  \Omega^\alpha\theta $ if $\be$ is a limit,

\item $ \fs{ \Omega^{\al+1}   }{\theta} =  \Omega^\al \theta $,

\item $ \fs{ ( \Omega^\alpha(\be+1) )  }{\theta} =  \Omega^\al \be +  {\fs {\Omega^\al}\theta } $ if $\be>0$, and

\item $ \fs{ \Omega^\alpha }{\theta} =  \Omega^{\fs \al\theta } $ if $\al$ is a limit.

\end{enumerate}
\end{definition}

Note that $\al+\be$ being in Cantor normal form does not suffice to guarantee $\fs{(\al+\be)}\theta=\al+\fs\be\theta$, as for example $\fs{\Om(\om+\om)}1=\Om\neq \Om\om+\fs{(\Om\om)}1$.
However, we do have a restricted version of this phenomenon.
The proof follows a simple induction and is left to the reader.

\begin{lemma}\label{lemmFSSum}
Suppose that $\Om^\al(\be+\de)+\ga <\ve_{\Om+1}$ is in $\Om$-normal form, $\de$ is a successor or zero, and $\Om^\al \de +\ga>0$.
Then, for all $\theta$,  $\fs{(\Om^\al(\be+\de)+\ga )}\theta=\Om^\al\be+\fs{(\Om^\al \de +\ga )}\theta$.
\end{lemma}

Theta functions are defined in terms of the maximal coefficient of an ordinal.
We define the set of coefficients of $\xi <\varepsilon_{\Om+1}$ by $\coeffs  0 = \{0\}$ and $\coeffs  {\Omega^{\alpha}\be+\ga} = \coeffs  {\al} \cup \coeffs  \ga \cup \{\be\}$ if $\Omega^{\alpha}\be+\ga$ is in $\Om$-normal form, and the maximal coefficient of $\alpha$ by $\mck\Omega\al=\max\coeffs\al$.
For $\xi<\varepsilon_{\Omega+1}$, the {\em terminal part} of $\xi$, denoted $\tau(\xi)$, is given recursively by
\begin{enumerate}

\item $\tau(0) = 0$ and $\tau(\zeta+1) = 1$,

\item $\tau (\Omega^\al \be+\ga) = \tau  (\ga)$ if $\ga $ is a limit and $\Omega^\al \be+\ga$ is in $\Omega$-normal form,

\item $\tau (\Omega^\al \be ) = \be$ if $\be$ is a limit,

\item $\tau (\Omega^\al (\be+1) ) = \tau(\al)$ if $\al$ is a limit, and

\item $\tau(\Omega^ {\al+1}(\be+1)) = \Omega$.

\end{enumerate}
For notational convenience, we have defined $\fs\xi\theta$ even when $\theta\geq \tau$, although normally one is interested in cases where $\theta<\tau$.

\begin{example}
Let $\xi =\Om^{ \omega^3+\om}+\Om^{ \om^2 }\om$, written in $\Om$-normal form.
We can see that $\coeffs \xi = \{0, \omega^3+\om,\om^2,\om \}$, so that $\xi^* = \om^3+\om$, as this is the largest element of $\coeffs \xi$.
(To compute e.g.~$\coeffs{\omega^3+\om}$, we must write it as $\Om^0(\omega^3+\om)+0$ in $\Om$-normal form and then apply the definition of $\coeffs\cdot$.)
Meanwhile, $\tau(\xi) = \om$, as it is the rightmost limit coefficient.

If instead we consider $\xi'=\xi+\Om^{ \om^2 }$, we note that $\Om^{ \omega^3+\om}+\Om^{ \om^2 }\om+\Om^{ \om^2 }$ is in $\Om$-normal form according to our definitions with $\al= \omega^3+\om $, $\be=1$ and $\ga = \Om^{ \om^2 }\om+\Om^{ \om^2 }$; note however that $\ga$ is not written in normal form, so we may instead write $\ga= \Om^{ \om^2+\om }(\om+1)$ to obtain $\xi'=\Om^{ \omega^3+\om }+\Om^{ \om^2+\om }(\om+1)$.
Now we see that $\tau(\xi')=\om^2+\om$ is its terminal part since the coefficient $\om+1$ is a successor.

It is readily checked that $\xi =\lim_{n\to\infty} \Om^{ \omega^3+\om}+\Om^{ \om^2 }n$, or in other words, $\xi=\lim_{n\to\tau(\xi)} \fs\xi n$.
Similarly, $\xi'=\lim_{n\to\infty} \Om^{ \omega^3+\om}+\Om^{ \om^2 }\om+\Om^{ \om n }$.
\end{example}

Below and throughout the text, if $(\al_\theta)_{\theta<\tau}$ is any sequence, we write `$\al_\theta \nearrow\al$ as $\theta\nearrow\tau$' to indicate that $\al_\theta$ is an increasing sequence converging to $\al$.

\begin{lemma}[\cite{BuchholzOrd}]\label{lemmBoundMC}
Let $\la <\varepsilon_{\Omega+1}$ be a limit ordinal.
Then,
\begin{enumerate}

\item $\tau = \tau(\la )$ is a limit and $\la=\fs\la \tau$;

\item\label{itBoundMCConverge} $\fs\la \theta\nearrow \la $ as $\theta\nearrow\tau$;

\item\label{itBoundMCTwo}
if $\theta<\Om$, then $\theta\leq \mc {\fs\la \theta} \leq \max \{\mc \la ,\theta  \}$;

\item\label{itFundPropMajor} if $\xi < \lambda$ and $\mck\Omega \xi < \mc{\fs\lambda \theta} $, then $\xi < \fs\la\theta $.

\end{enumerate}
\end{lemma}

\begin{proof}
We prove the last item and leave the others to the reader.
Proceed by induction on $\lambda = \Om^\alpha\beta+\eta$ in $\Om$-normal form.
We prove the contrapositive and assume that $\fs\la\theta\leq \xi <\la$ and show that $\mc\xi\geq \mc{\fs \la\theta}$.
\begin{Cases}
\item ($\eta>0$).
From
\[\Om^\al\be+\fs\eta \theta = \fs \lambda\theta \leq \xi <\la  \]
we obtain $\xi = \Om^\al\be +\de$ with $\fs \eta \theta \leq \de<\eta$.
The induction hypothesis yields $\de^*\geq \mc{\fs\eta\theta} $, from which it readily follows that $\mc\xi \geq \mc{\fs\lambda\theta}$.

\item ($\eta=0$).
This case divides into several sub-cases.
\begin{Cases}

\item ($\be\in \rm Lim$).
Then $\fs \lambda\theta = \Om^\al\theta$, and since $\Om^\al\theta\leq \xi <\Om^\al\beta$ we obtain $\theta<\beta$ and $\xi=\Om^\al\rho+\delta$ for some $\rho\geq\theta$ and $\de\geq 0$.
Then $\mc\xi\geq \mc{(\Om^\al\rho)}\geq \mc{(\Om^\al\theta)} =\mc{\fs\la\theta}$.

\item ($\be = \ga+1$).
Consider further sub-cases according to the shape of $\al$.
\begin{Cases}
\item ($\al=0$).
Then $\fs\la\theta=\ga\leq \xi<\la$, so $\xi=\ga = \mc{\fs\la\theta}$ and $\xi^*=\xi=\mc{\fs\la\theta}$.

\item ($\al=\chi+1$).
We have that
\[\Om^{\chi+1}\ga+\Om^\chi\theta  = \fs \lambda\theta \leq \xi <\la . \]
It follows that $\xi=\Om^{\chi+1}\ga+\Om^\chi \rho+\de$ with $ \de<\Om^\chi$ and $\rho\geq\theta$, hence by inspection  $\mc\xi\geq \mc{\fs\la\theta}$. 

\item ($\al\in\rm Lim$).
Then
\[\Om^{\al}\ga+\Om^{\fs\al\theta}  = \fs \lambda\theta \leq \xi <\la . \]
We then have that $\xi = \Om^{\al}\ga+\Om^{\zeta}\rho+\delta$ with $ \fs\al\theta\leq \zeta<\al$, $\de<\Om^{\zeta}$, and $\rho>0$.
The induction hypothesis yields $\mc\zeta\geq\mc{\fs\al\theta}$, from which it can be checked that all coefficients of $\fs\la\theta$ are bounded by coefficients of $\xi$, i.e.~$\mc\xi\geq \mc{\fs\la\theta}$. \qedhere
\end{Cases}
\end{Cases}
\end{Cases}
\end{proof}

We often consider ordinals in the form $\alpha+\beta$, where $\alpha=\Om\tilde\alpha$ and $\beta<\Om$.
Sometimes it will be useful in these cases to apply fundamental sequences directly to $\al$.
In this setting, it is convenient to know that the end result maintains the form $\Om\tilde\ga+\be$.
The following makes this precise.

\begin{lemma}\label{lemmOmAlph}
If $0 < \al <\ve_{\Om+1}$ and $\tau(\Om\al)<\Om$ then for every $\theta>0$ we have that $\fs{(\Om\al)}\theta = \Om\ga$ for some $\ga$.
\end{lemma}

\begin{proof}
By induction on $\al$.
Write $\al=\Om^{\eta}\be+\ga$ in $\Om$-normal form and note that $\eta>0$ and $\ga=\Om\tilde\ga$.
If $\ga>0$, we may apply the induction hypothesis to $\ga$, so assume otherwise.
If $\be$ is a limit then $\fs{(\Om\al)}\theta = \Om^{\eta}\theta $, which is of the required form since $\eta>0$.
If $\be = \de+1$ then $\fs{(\Om\al)}\theta = \Om^\eta\de+ \Om^{\fs\eta\theta} $.
Moreover, note that in this case $\eta$ must be a limit since otherwise we would obtain $\tau(\Om\al)=\Om$.
Hence $\fs\eta\theta>0$, and again we see that $\fs{(\Om\al)}\theta$ is of the required form.
\end{proof}

On occasion we want to `remove' the terminal part of $\xi$ without altering $\xi$ itself too much.
We can use fundamental sequences for this end, since they affect only the rightmost part of $\xi$.
For example, let $\xi=\Om^{\Om^{\om^2}\om}$.
We have that $\xi^*=\om^2$ while $\tau(\xi)=\om$.
If we compute $\fs\xi 0$ we obtain $\fs\xi 0=\Om^0$, so indeed we have removed the terminal part but we also lost the larger maximal coefficient.
If instead we try $\fs\xi 1$, we now obtain $\fs\xi 1=\Om^{\Om^{\om^2}1}$, which maintains more of the structure of $\xi$; in particular, the maximal coefficient has not been lost.
Thus we will follow Buchholz~\cite{BuchholzOrd} in working with $\xi\mapsto \fs \xi 1$ when we do need to remove the terminal part of $\xi$.
Despite our example, it is in fact possible to have $\fs \xi 1^*<\xi^*$, but only in very specific cases.

\begin{lemma}\label{lemmStarPlusOne}
Let $\xi<\ve_{\Om+1}$ and suppose that $\fs\xi 1^*<  \xi^*$.
Then, either $\xi^*=\tau(\xi)\in \rm Lim$, or $\xi^*= \fs\xi 1^*+1$.
\end{lemma}

\begin{proof}
The argument goes by induction on $\xi$.
The case for $\xi=0$ is vacuously true since $\fs\xi 1^*< \xi^*$ is impossible.
If $\xi=\Om^\al\be+\ga$ with either $\ga>0$ or $\ga=0$ and $\be=1$, we may apply the induction hypothesis to $\ga$ or $\al$, respectively.
If $\xi=\be+1$ with $\be$ countable, then $\xi^*=\be+1$ and $\fs\xi 1 = \be$, so $\fs\xi 1^*=\be$ as well.
If $\xi=\Om^\al\be$ with $\be$ a limit, then $\be=\tau(\xi)$ and $\fs\xi 1 =\Om^\al $.
Thus $\coeffs\al \subseteq \coeffs{\fs\xi 1}$, so the only way to have $\fs\xi 1^*<\xi^*$ is if $\xi^*=\be =\tau(\xi)$.
Otherwise, $\be=\delta+1$ is a successor and $\fs\xi 1 =\Om^\al\de+\Om^{\fs \al 1}$.
Once again we observe that $\coeffs\al \subseteq \coeffs{\fs\xi 1}$, so the only way to have $\fs\xi 1^*<\xi^*$ is if $\xi^*=\be  $, which leads to $\fs\xi 1^*=\de$.
\end{proof}

As a corollary we obtain that if $\tau(\xi) = \Om$ then $\xi^* \leq \fs\xi 1^*+1$ always holds.

\section{Parametrized theta functions}\label{secParTheta}

The theta function admits a general definition with respect to a class $\mathbb X$ which is assumed club in $\Omega$ and such that $0\notin\mathbb X$~\cite{BuchholzSurvey}.
For $\xi<\ve_{\Om+1}$, $\gt_\mathbb X(\xi)$ is defined recursively by
\[\gt_\mathbb X(\xi) = \min    \big \{\theta \in\mathbb X\cap ( \xi^*,\Om)  :   \forall \zeta<\xi \big (\mck\Om\zeta<\theta \rightarrow \gt_\mathbb X(\zeta)<\theta \big )\big \} .\]

It will be convenient to present this definition in terms of candidates.
Say that $\theta\in \mathbb X$ is a {\em $\gt_\mathbb X$-candidate for $\xi$} if $\theta>\xi^*$ and $\forall \zeta<\xi \big (\mck\Om\zeta<\theta \rightarrow \gt_\mathbb X(\zeta)<\theta \big )$. 
Then, $\gt_\mathbb X(\xi)$ is the least $\gt_\mathbb X$-candidate for $\xi$.

If we let $\mathbb P$ denote the class of {\em principal numbers,} i.e., the ordinals of the form $\om^\xi$, then one of the most common variants in the literature is $\vartheta:=\gt_\mathbb P$.
We also set $\nt := \gt_{1+{\sf Ord}}$, where $1+{\sf Ord} = \{1+\xi:\xi\in {\sf Ord}\}$ or, equivalently, ${\sf Ord}\setminus \{0\}$.

\begin{example}\label{exSigmaSucc}
It will be instructive to compute $\nt(\beta)$ when $\beta<\Om$.
In fact, we claim that in this case, we simply have $\nt(\beta)=\beta+1$.
We may prove this inductively on $\beta$; assuming $\nt(\gamma)=\gamma+1$ for all $\gamma<\beta$, we propose $\beta+1$ as a $\sigma$-candidate for $\beta$.
Indeed, $\beta+1\in 1+\sf Ord$ and if $\gamma<\beta$ and $\gamma^*<\beta+1$ then by the induction hypothesis, $\sigma(\gamma)=\gamma+1<\beta+1$.
Moreover, $\beta+1>\beta=\beta*$ (since the maximal coefficient of a countable ordinal is itself), so $\beta+1$ is indeed a $\sigma$-candidate for $\beta$.
It is obviously least (by the condition $\nt(\beta)>\beta^*$), so $\nt(\beta)=\beta+1$.
Thus $\nt$ can be viewed as an extension of the successor on the countable ordinals (hence the notation), and $\nt[\Om]=\Om\cap \rm Succ$.
\end{example}

Note that since $\mathbb X$ is assumed unbounded, every ordinal $\xi$ has an `$\mathbb X$-successor', which we denote $\sx {\mathbb X}\xi$, defined as the least element of $\mathbb X$ strictly greater than $\xi$.
This will be useful in proving that $\gt_\mathbb X$ is well-defined.

\begin{lemma}[\cite{BuchholzSurvey}]
Given a club $\mathbb X\subseteq(0,\Om)$, the function $\gt_\mathbb X\colon \ve_{\Om+1}\to \Omega$ is well-defined.
\end{lemma}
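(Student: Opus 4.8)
The plan is to read the displayed equation as a definition of $\gt_{\mathbb X}$ by transfinite recursion on $\xi<\ve_{\Om+1}$: the value $\gt_{\mathbb X}(\xi)$ refers only to $\gt_{\mathbb X}(\zeta)$ for $\zeta<\xi$, so the only thing to check is that at each stage the set of $\gt_{\mathbb X}$-candidates for $\xi$ is nonempty and that its least element is a genuine ordinal below $\Om$, so that $\gt_{\mathbb X}$ maps into its stated codomain. I would prove both by induction on $\xi$: assuming $\gt_{\mathbb X}(\zeta)<\Om$ is defined for every $\zeta<\xi$, I exhibit a candidate $\theta<\Om$.

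The crucial auxiliary fact is a counting observation: for each $\eta<\Om$ the set $S_\eta=\{\zeta<\ve_{\Om+1}:\zeta^*\le\eta\}$ is countable. Indeed, by the remark in Section~\ref{secOrd} that the exponent of an $\Om$-normal form $\Om^\al\be+\ga$ is strictly smaller than the ordinal it denotes, every $\zeta<\ve_{\Om+1}$ is coded by a finite expression built from $0$, $+$, the map $\xi\mapsto\Om^\xi$, and multiplications by ordinals $<\Om$; if $\zeta^*\le\eta$ then all ordinal coefficients occurring in this expression lie in $\coeffs\zeta$ and are thus $\le\eta$, so $S_\eta$ embeds injectively into the set of finite terms over an alphabet of size $|\eta|+\aleph_0=\aleph_0$ and is therefore countable. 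Consequently, for each $\eta<\Om$ the set $\{\gt_{\mathbb X}(\zeta):\zeta<\xi,\ \zeta^*\le\eta\}$ is a countable subset of $\Om$, and since $\Om=\om_1$ is regular its supremum is below $\Om$; put $g(\eta)=\sup\{\gt_{\mathbb X}(\zeta)+1:\zeta<\xi,\ \zeta^*\le\eta\}<\Om$, which gives a function $g\colon\Om\to\Om$.

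Next I would observe that the class $C$ of ordinals $\theta<\Om$ with $\theta\in\mathbb X$, $\theta>\xi^*$, and $g(\eta)\le\theta$ for all $\eta<\theta$ is club in $\Om$. The tail $(\xi^*,\Om)$ is club; $\mathbb X$ is club by hypothesis, where both closedness and unboundedness are used; and the class of ordinals closed under $g$ is closed, since a supremum of $g$-closed ordinals is $g$-closed, and unbounded, since iterating $\eta\mapsto\sup_{\rho<\eta}g(\rho)$ from an arbitrary starting point stays below $\Om$ by regularity again. As $\Om$ has uncountable cofinality, the intersection $C$ of these three clubs is club, hence nonempty. Finally, any $\theta\in C$ is a $\gt_{\mathbb X}$-candidate for $\xi$: it lies in $\mathbb X$ and exceeds $\xi^*$, and if $\zeta<\xi$ has $\zeta^*<\theta$ then, taking $\eta=\zeta^*$, we get $\gt_{\mathbb X}(\zeta)<g(\eta)\le\theta$. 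Thus the candidate set is nonempty, and since every candidate belongs to $\mathbb X\subseteq\Om$, its least element $\gt_{\mathbb X}(\xi)$ is a well-defined ordinal below $\Om$, which closes the induction.

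I expect the main obstacle to be the counting step: one must be confident that bounding the maximal coefficient genuinely cuts $\ve_{\Om+1}$ down to a countable set, and this rests squarely on $\Om$-normal forms being hereditarily finite expressions, i.e.\ on the bound $\al<\xi$ for the exponent quoted in Section~\ref{secOrd}. Once that is secured, the remainder is a routine application of the regularity of $\Om$ and of the fact that an intersection of finitely many club subsets of $\Om$ is again club.
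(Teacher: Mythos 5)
Your proposal is correct and is in substance the paper's own argument: both rest on the countability of $\{\zeta<\xi:\zeta^*<\lambda\}$ for countable $\lambda$ (together with closedness and unboundedness of $\mathbb X$ and the regularity of $\Om$), and both produce a $\gt_{\mathbb X}$-candidate for $\xi$ as a closure point of the operation $\lambda\mapsto\sup\{\gt_{\mathbb X}(\zeta)+1:\zeta<\xi,\ \zeta^*<\lambda\}$. The differences are only in packaging: the paper reaches such a closure point by an explicit $\omega$-iteration $\theta_0<\theta_1<\cdots$ starting at the least element of $\mathbb X$ above $\xi^*$, whereas you obtain a whole club of candidates by intersecting three clubs, and you additionally spell out (via hereditary $\Om$-normal forms) the counting step that the paper asserts without proof.
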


\begin{proof}
Assume inductively that $\gt_\mathbb X$ is total on $\xi<\ve_{\Om+1}$.
We must show that $\gt_\mathbb X(\xi)$ is defined.
Define a sequence $(\theta_n)_{n<\om}$ by letting $\theta_0$ be the least element of $\mathbb X$ greater than $\xi^* $, which exists by unboundedness of $\mathbb X$, and
\[\theta_{n+1} = \sup\{\sx{\mathbb X}{\gt_\mathbb X(\zeta)} : \zeta<\xi\text{ and }\zeta^*<\theta_n  \}. \]
Let $\theta=\sup_{n<\om} \theta_n$.
First we claim that $\theta$ is countable; to see this, note that it is a countable limit, and moreover each $\theta_n$ is countable since the set $\{\zeta<\xi : \zeta^*<\lambda\}$ is countable for any countable $\lambda$.
Moreover, $\theta\in\mathbb X$ since $\mathbb X$ is assumed closed.
Then note that if $\zeta<\xi$ and $\zeta^*<\theta$, it follows that $\zeta^*<\theta_n $ for some $n$ and so $\gt_\mathbb X(\zeta) <\theta_{n+1}<\theta$.
Thus $\theta$ is $\gt_\mathbb X$-candidate for $\xi$, hence the least candidate exists.\footnote{In fact, $\theta$ is this least candidate.}
\end{proof}

The following can readily be deduced from the definition.

\begin{lemma}\label{lemmThetaOrd}
If $\al<\be<\ve_{\Om+1}$, then $\gt_\mathbb X(\al)<\gt_\mathbb X(\be)$ if and only if $\al^*<\gt_\mathbb X(\be)$.
\end{lemma}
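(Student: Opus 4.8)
The plan is to prove both directions from the definition of $\gt_\mathbb X$ via the candidate characterization. First suppose $\al^* < \gt_\mathbb X(\be)$. I would verify that $\gt_\mathbb X(\be)$ is itself a $\gt_\mathbb X$-candidate for $\al$: it lies in $\mathbb X$ by definition, it exceeds $\al^*$ by hypothesis, and for any $\zeta < \al$ with $\mck\Om\zeta < \gt_\mathbb X(\be)$ we have $\zeta < \al < \be$, so the candidate condition satisfied by $\gt_\mathbb X(\be)$ (applied to $\zeta$) gives $\gt_\mathbb X(\zeta) < \gt_\mathbb X(\be)$. Hence $\gt_\mathbb X(\be)$ is a candidate for $\al$, and since $\gt_\mathbb X(\al)$ is the \emph{least} such candidate, $\gt_\mathbb X(\al) \leq \gt_\mathbb X(\be)$. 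To upgrade $\leq$ to $<$, note that $\al < \be$ and $\mck\Om\al = \al^* < \gt_\mathbb X(\be)$, so the candidate condition for $\be$ applied directly to $\zeta := \al$ yields $\gt_\mathbb X(\al) < \gt_\mathbb X(\be)$.

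For the converse, suppose $\gt_\mathbb X(\al) < \gt_\mathbb X(\be)$. Since $\gt_\mathbb X(\al)$ is a $\gt_\mathbb X$-candidate for $\al$, it satisfies $\gt_\mathbb X(\al) > \al^*$; combining with the hypothesis gives $\al^* < \gt_\mathbb X(\al) < \gt_\mathbb X(\be)$, which is exactly what is required.

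The argument is short and essentially unwinds the definitions; the one point requiring a moment's care is the first direction, where one must recognize that $\gt_\mathbb X(\be)$ serves as a candidate for $\al$ precisely because the quantifier $\forall \zeta < \be$ in the definition of $\gt_\mathbb X(\be)$ already covers all $\zeta < \al$. I anticipate no real obstacle; the only subtlety is keeping straight which of the two defining properties of a candidate (the bound $\theta > \xi^*$ versus the closure condition on the $\gt_\mathbb X(\zeta)$) is being invoked at each step, and making sure the strict inequality in the first direction is obtained by applying the closure condition to $\zeta = \al$ itself rather than merely inferring $\leq$ from minimality.
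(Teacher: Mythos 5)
Your proof is correct and is exactly the routine unwinding of the candidate definition that the paper has in mind (it states the lemma as "readily deduced from the definition" without giving a proof). Note only that your first paragraph's candidacy argument for $\al$ is superfluous, since applying the closure condition of $\gt_\mathbb X(\be)$ directly to $\zeta=\al$ already gives the strict inequality, as you yourself observe.
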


For ordinals $\kappa,\la$, define $\kappa_n(\la)$ recursively by $\kappa_0(\la)=\la$ and $\kappa_{n+1} = \kappa^{\kappa_n(\la)}$.
We write $\kappa_n$ instead of $\kappa_n(1)$.
Note that $\Om_n<\Om_{n+1}$ and $\Om_n^*=\Om_{n+1}^*$ yield $\gt_\mathbb X(\Om_n) < \gt_\mathbb X(\Om_{n+1})$.

\begin{lemma}\label{lemmOmTower}
If $\xi\in \gt_\mathbb X(\Om_n) \cap\mathbb X$, then there is $\zeta < \Om_n$ such that $\xi=\gt_\mathbb X(\zeta)$.
\end{lemma}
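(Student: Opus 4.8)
The plan is to produce the witness $\zeta$ directly, as the least element of a carefully designed set, with no induction on $n$ at all. Throughout write $\xi^*$ for $\mc\xi$. The only fact about the tower that I will need is that $\coeffs{\Om_n}=\{0,1\}$, hence $\Om_n^*=1$, for every $n$; this is immediate by induction on $n$ from $\coeffs 1=\{0,1\}$ and $\Om_{n+1}=\Om^{\Om_n}$. I will also use the trivial observation that $\gt_\mathbb X(0)=\min\mathbb X$ (read off the definition, since $0^*=0$ and the universal clause is vacuous).

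Next I would set
\[S=\{\zeta\le\Om_n : \gt_\mathbb X(\zeta)\ge\xi \text{ and } \zeta^*<\xi\},\]
and take $\zeta_0=\min S$, once $S$ is known to be nonempty. Nonemptiness splits into two trivial cases. If $\xi>1$, then $\Om_n\in S$, because $\Om_n^*=1<\xi$ and $\gt_\mathbb X(\Om_n)>\xi$ by hypothesis. If $\xi=1$, then since $1=\xi\in\mathbb X$ while $0\notin\mathbb X$ we have $\gt_\mathbb X(0)=\min\mathbb X=1=\xi$, so $0\in S$. Either way $\zeta_0$ is defined and $\zeta_0\le\Om_n$.

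The core of the argument is to check that $\xi$ is a $\gt_\mathbb X$-candidate for $\zeta_0$. Membership $\xi\in\mathbb X$ is given; $\xi>\zeta_0^*$ holds because $\zeta_0\in S$; and for the remaining clause, suppose $\eta<\zeta_0$ with $\eta^*<\xi$. If $\gt_\mathbb X(\eta)\ge\xi$ held, then $\eta$ would lie in $S$ (note $\eta<\zeta_0\le\Om_n$), contradicting the minimality of $\zeta_0$; hence $\gt_\mathbb X(\eta)<\xi$. Thus $\xi$ is a $\gt_\mathbb X$-candidate for $\zeta_0$, and since $\gt_\mathbb X(\zeta_0)$ is the \emph{least} such candidate we get $\gt_\mathbb X(\zeta_0)\le\xi$; combined with $\gt_\mathbb X(\zeta_0)\ge\xi$ coming from $\zeta_0\in S$, we conclude $\gt_\mathbb X(\zeta_0)=\xi$. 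Finally $\zeta_0\ne\Om_n$, for otherwise $\gt_\mathbb X(\Om_n)=\xi$ would contradict $\xi\in\gt_\mathbb X(\Om_n)$; hence $\zeta:=\zeta_0<\Om_n$ is the desired ordinal.

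The step I expect to be the genuine obstacle is hitting on the right set $S$. The obvious choice — the least $\zeta$ with $\gt_\mathbb X(\zeta)\ge\xi$ — does not work: it can equal $\xi$ itself (for instance when $\xi=\gt_\mathbb X(\Om)$ and $n\ge 2$, since then $\gt_\mathbb X(\eta)<\xi$ for all $\eta<\xi$), and there $\gt_\mathbb X(\xi)>\xi$ overshoots. Adding the clause $\zeta^*<\xi$ is exactly what makes $\xi$ into a candidate for the minimal element, and the computation $\Om_n^*=1$ is exactly what keeps $\Om_n$ in $S$ so that $S$ stays nonempty. Once $S$ is chosen this way, everything else is routine unwinding of the notion of $\gt_\mathbb X$-candidate, and the edge cases $\xi=1$ and $n=0$ need no separate treatment.
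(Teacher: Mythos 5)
Your proof is correct and is essentially the paper's own argument: take the least $\zeta$ with $\gt_\mathbb X(\zeta)\geq\xi$ and $\zeta^*<\xi$ (existence witnessed by $\Om_n$, via $\Om_n^*=1$), show $\xi$ is a $\gt_\mathbb X$-candidate for it by minimality, and conclude $\gt_\mathbb X(\zeta)=\xi$ with $\zeta<\Om_n$. The only differences are cosmetic: the paper isolates the case $\xi=\min\mathbb X$ (where $\xi=\gt_\mathbb X(0)$) instead of your $\xi=1$ case, and leaves the checks $\Om_n^*=1$ and $\zeta\neq\Om_n$ implicit.
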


\begin{proof}
If $\xi=\min\mathbb X$ then we observe that $\xi = \gt_\mathbb X(0)$.
 Otherwise, let $\zeta$ be least so that $\gt_\mathbb X(\zeta) \geq \xi$ and $\zeta^*<\xi$, which exists and is bounded by $\Om_n$ since $\Om_n$ has these properties.
 We claim that $\xi$ is a $\gt_\mathbb X$-candidate for $\zeta$, which by the assumption $\gt_\mathbb X(\zeta) \geq \xi$  yields $\gt_\mathbb X(\zeta)=\xi$.
 But by our choice of $\zeta$, if $\gamma<\zeta$ is such that $\gamma^*<\xi$, then also $\gt_\mathbb X(\gamma)<\xi$, as needed.
\end{proof}

By abuse of notation, we define $\gt_\mathbb X(\ve_{\Om+1}) = \sup_{n<\om}\gt_\mathbb X(\Om_n)$.

\begin{corollary}\label{corOmTower}
If $\xi \in \gt_\mathbb X(\ve_{\Om+1}) \cap\mathbb X$ there is $\zeta<\ve_{\Om+1}$ with $\gt_\mathbb X(\zeta) = \xi$.
\end{corollary}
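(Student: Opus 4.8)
The plan is to reduce directly to Lemma~\ref{lemmOmTower}. Recall that $\gt_\mathbb X(\ve_{\Om+1})$ was defined as $\sup_{n<\om}\gt_\mathbb X(\Om_n)$, and that by the observation preceding Lemma~\ref{lemmOmTower} (which invokes Lemma~\ref{lemmThetaOrd} together with $\Om_n<\Om_{n+1}$ and $\Om_n^*=\Om_{n+1}^*$) the sequence $\big(\gt_\mathbb X(\Om_n)\big)_{n<\om}$ is strictly increasing; in particular its supremum is a limit ordinal, not attained at any finite stage. Hence from $\xi<\gt_\mathbb X(\ve_{\Om+1})$ we may fix $n<\om$ with $\xi<\gt_\mathbb X(\Om_n)$, so that $\xi\in\gt_\mathbb X(\Om_n)\cap\mathbb X$.

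Lemma~\ref{lemmOmTower} now applies and produces some $\zeta<\Om_n$ with $\gt_\mathbb X(\zeta)=\xi$. Since $\Om_n$ lies in the domain of $\gt_\mathbb X$, i.e.~$\Om_n<\ve_{\Om+1}$ — as $\ve_{\Om+1}$ is an $\ve$-number above $\Om$ and therefore closed under $\zeta\mapsto\Om^\zeta$ — this $\zeta$ is the desired witness, and the corollary follows. The only point that requires a moment's thought is the passage from ``$\xi$ below the supremum'' to ``$\xi$ below a single $\gt_\mathbb X(\Om_n)$'', which relies on the sequence being genuinely increasing rather than eventually constant; but this is exactly the remark recorded just before Lemma~\ref{lemmOmTower}, so no real obstacle arises.
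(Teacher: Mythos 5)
Your proof is correct and follows exactly the route the paper intends: the corollary is immediate from the definition $\gt_\mathbb X(\ve_{\Om+1})=\sup_{n<\om}\gt_\mathbb X(\Om_n)$ (so $\xi<\gt_\mathbb X(\Om_n)$ for some $n$, no strict monotonicity actually needed for this step) together with Lemma~\ref{lemmOmTower}.
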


We define $\constr$ to be the set of ordinals $\xi<\ve_{\Om+1}$ such that $\xi^*<\gt_\mathbb X(\ve_{\Om+1})$.
Then, $\gt_\mathbb X\colon \constr\to \gt_\mathbb X(\ve_{\Om+1})$ can be checked to be injective and have range $\mathbb X \cap \gt_\mathbb X(\ve_{\Om+1})$ using Lemma \ref{lemmThetaOrd}.

\begin{remark}
Recall that we defined $\nt=\Theta_{1+\sf Ord}$.
In this case, Corollary~\ref{corOmTower} tells us that if $0<\be<\nt(\ve_{\Om+1})$, then $\be=\nt(\zeta)$ for some $\zeta\in\constr$.
Thus any $\Om$-normal form $\Om^\al\be+\ga$ can be rewritten as $\Om^\al\nt(\zeta)+\ga$.
This makes $\nt$ a particularly compelling choice for ordinal notations, as addition does not need to be applied to countable ordinals.
\end{remark}

\section{Approximating $\mathbb X$-limits}\label{secXLim}

The fundamental sequence for an ordinal $\xi$ is a sequence $(\cfs\xi n)_{n<\om}$ such that $\cfs \xi n\nearrow \xi$ as $n\nearrow \om$.
In Section~\ref{secFS}, we will present a system of fundamental sequences due to Buchholz; the goal of this section is to set the stage for this system by showing how to approximate various ordinals according to their `shape'.
Note that outside of $\mathbb X$, the definition of fundamental sequences need not involve the $\gt_\mathbb X$ function; this is also true of points that are isolated in $\mathbb X$, as (by definition) they cannot be approximated by points of $\mathbb X$ itself.
However, for limit points of $\mathbb X$, we can define fundamental sequences with some generality in terms of values of  $\gt_\mathbb X$.
To this end, let $\mathbb X'$ denote the set of limit points of $\mathbb X$; since $\mathbb X$ is assumed closed, we have that $\mathbb X'\subseteq\mathbb X$.

The results in this section are due to Buchholz \cite{BuchholzOrd}, but we include their proofs in order to keep the article self-contained.
Here and in the rest of the paper, we will often write $\gt$ instead of $\gt_\mathbb X$.

Many of the functions used in `predicative' ordinal analysis are {\em normal;} they are increasing and continuous (on at least one of their arguments).
For example, the function $\xi\mapsto\omega^\xi$ is increasing on $\xi$ and has the property that, when $\xi$ is a limit, $\om^\xi=\lim_{\zeta\to \xi}\om^\zeta$.
Normality is very convenient when defining fundamental sequences, as if $f$ is normal then to approximate $f(\lambda)$, it suffices to approximate $\lambda$ when $\lambda$ is a limit.
However, normal functions satisfy $\xi\leq f(\xi)$, so given that $\gt_\mathbb X\colon \constr\to \gt_\mathbb X(\ve_{\Om+1})$, we must have that $\gt_\mathbb X$ is either non-monotone or not (always) continuous.
In fact, both of these are the case.

Let us fix $\mathbb X$ and write $\gt$ for $\gt_\mathbb X$.
Examples of non-monotonicity of $\gt$ are easy to find using Lemma~\ref{lemmThetaOrd}, as for example $\gt (\Om)<\gt (\gt (\Om))$ even though $\gt (\Om)$ is countable.
Identifying the points where continuity fails, or `jumps', is a bit more subtle.
We may count zero and all successors as `jumps', since in these cases it is clear that $\gt   (\xi)$ cannot be obtained by approximating $\xi$, even when $\gt (\xi)$ is a limit ordinal.
If $\xi$ is a limit, then in most cases we {\em will} have that $\gt (\xi)=\lim_{\zeta\to \xi}\gt (\xi)$; for example, $\gt (\Om\om+\om)=\lim_{n\to\omega}\gt (\Om\om+n)$, since any $\zeta<\Om\om+\om$ satisfying $\zeta^*<\gt (\Om\om+\om)$ will either be of the form $\Om\om+n$ or else will satisfy $\zeta < \Om\om$ and, as we will see, also satisfy $\zeta^*<\gt (\Om\om+n)$ for large enough $n$, and hence $\gt(\zeta)<\gt (\Om\om+n)$ for such $n$.
But an issue arises when the terminal part $\tau=\tau(\xi)$ of $\xi$ is also its largest coefficient, leading to instances of $\zeta<\xi$ that have coefficients greater than any $\gt (\fs \xi \eta)$ for $\eta<\tau$.

Consider, for example, $\xi=\Om+\gt (\ga)$, where $\ga=\Om 2$.
The maximal coefficient of $\xi$ is also its terminal part, i.e.
\begin{equation}\label{eqTau}
\xi^*=\tau(\xi),
\end{equation}
and this coefficient not occur elsewhere, which we may capture via the inequality
\begin{equation}\label{eqAst}
\fs\xi 1^*<\xi^*,
\end{equation}
as applying the fundamental sequence at $1$ `kills' the terminal part (see the discussion before Lemma~\ref{lemmStarPlusOne}).
We have that $\gt (\xi)>\gt(\Om+\eta)$ for all $\eta<\gt (\ga)$, but {\em also} have that $\gt (\xi)>\gt(\gt (\ga))$ since $\gt (\ga)=\xi^*<\gt(\xi)$.
Moreover, if $\eta<\gt(\ga)$ then since
\begin{equation}\label{eqGam}
\xi<\ga,
\end{equation}
we have that
$\Om+\eta<\ga$ and $(\Om+\eta)^*=\max\{1,\eta\} <\gt(\ga)$, so $\gt(\eta)<\gt(\ga)$.
This means that $\lim_{\eta\to \gt(\Om 2)} \gt(\Om+\eta) = \gt(\ga) <\gt(\Om+\gt(\ga))$, hence $\gt(\Om+\gt(\ga))\neq \lim_{\eta\to \gt(\ga)} \gt(\Om+\eta)$.
Thus we have identified three conditions, \eqref{eqTau}, \eqref{eqAst}, and \eqref{eqGam}, which lead to $\gt$ being discontinuous at $\xi$.
These conditions lead to the definition of ${\rm FIX}(\mathbb X)$ below.
The intuition is that $\xi\in {\rm FIX}(\mathbb X)$ if $\theta:=\tau$ is `almost' a $\gt$-candidate for $\xi$ -- except of course for the clause $\theta>\xi^*$ -- and hence would be a fixed point of $\theta \mapsto \gt(\fs\al \theta)$ if this clause were removed.

Finally, we note that multiples of $\Om$ are also points of discontinuity.
Suppose that $\al=\Om\tilde\al > 0$ and let $\tau=\tau \al$.
If $\tau<\Om$ then $\tau$ is a limit and, whenever $\tau_n\to \tau$, we have that
\begin{equation}\label{eqTauConv}
\fs \al{\tau_n}+\theta\to \al
\end{equation}
for {\em any} countable $\theta$.
If $\theta$ is too small we might have that $\lim_{n\to \infty}\gt( \fs \al{\tau_n}+\theta) \leq \gt(\al)$, but we may also {\em over}shoot -- say, if $\theta=\gt(\al)$ -- and obtain $\lim_{n\to \infty}\gt ( \fs \al{\tau_n}+\theta) > \gt(\al)$.
As we will see later, there is always a value of $\theta$ that is `just right' (see Definition~\ref{defStar} and Lemma~\ref{lemmThetaTau}).
The case that $\tau=\Om$ is even worse, since no countable sequence can converge to $\Om$ at all, and clearly $\lim_{\eta\to \Om}\gt( \fs \al{\eta} )=\Om > \gt(\al)$.
In this case we instead consider a sequence $\gt(\fs\al{\theta_n})$ where $\theta_n$ converges to $\gt(\al)$ (see Lemma~\ref{lemmThetaOm}).

The following definition classifies these points of discontinuity.
Below, $\Om{\sf Ord}$ is the class of `multiples of $\Om$', i.e.~of ordinals of the form $\Om\al$; note in particular that $0\in \Om\sf Ord$.

\begin{definition}
We define sets
\begin{enumerate}
\item 
${\rm FIX}(\mathbb X) = \{\xi<\ve_{\Om+1}: \fs\xi 1^*<\xi^* =\tau(\xi) = \gt_\mathbb X(\ga)\text{ for some $\ga>\xi$}\}
$

\item 
${\rm JUMP}(\mathbb X) =  {\rm Succ} \cup {\rm FIX}(\mathbb X) \cup \Om{\sf Ord}$.

\end{enumerate}
\end{definition}

\begin{lemma}\label{lemmNotFix}
If $\xi<\Om$ and $\xi\in {\rm JUMP}(\mathbb X) $, then $\gt(\xi)\notin \mathbb X'$.
\end{lemma}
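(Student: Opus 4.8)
The statement to prove is: if $\xi<\Om$ and $\xi\in {\rm JUMP}(\mathbb X)$, then $\gt(\xi)\notin\mathbb X'$. My plan is to split according to the three disjuncts defining ${\rm JUMP}(\mathbb X) = \{0\}\cup{\rm Succ}\cup{\rm FIX}(\mathbb X)$, and in each case argue that $\gt(\xi)$ fails to be a limit point of $\mathbb X$, i.e.\ that $\gt(\xi)$ is either $\min\mathbb X$ or an isolated point of $\mathbb X$. The natural way to detect that $\gt(\xi)$ is isolated is to exhibit a $\gt_\mathbb X$-candidate strictly below $\gt(\xi)$ whose successor-style ``gap'' up to $\gt(\xi)$ contains no element of $\mathbb X$; more concretely, I will show that $\gt(\xi)$ is obtained as the least element of $\mathbb X$ past a specific ordinal, so nothing in $\mathbb X$ accumulates to it from below.

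\textbf{Case $\xi=0$ or $\xi\in{\rm Succ}$.} If $\xi=0$, then $\gt(0)=\min\mathbb X$ (as observed in the proof of Lemma~\ref{lemmOmTower}), which is certainly not a limit point of $\mathbb X$. If $\xi=\zeta+1$, the key observation is that $\xi^*=\max\coeffs\zeta\cup\coeffs 1=\max\{\zeta^*,1\}$ is essentially $\zeta^*$ (one must note $\zeta^*<\gt(\zeta)$ so that the constraint from $\zeta$ is active), and in fact $\gt(\zeta+1)$ is the least element of $\mathbb X$ strictly above $\gt(\zeta)$: indeed any $\theta\in\mathbb X$ with $\theta>\gt(\zeta)$ is automatically a candidate for $\zeta+1$ (the only new requirement over being a candidate for $\zeta$ is $\gt(\zeta)<\theta$, using Lemma~\ref{lemmThetaOrd} to handle $\gamma<\zeta+1$ with $\gamma^*<\theta$), and conversely $\gt(\zeta+1)>\gt(\zeta)$ by Lemma~\ref{lemmThetaOrd} since $\zeta^*<\gt(\zeta)$ forces $\zeta^*=\xi^*<\gt(\xi)$. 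So $\gt(\xi)$ is the successor of $\gt(\zeta)$ in $\mathbb X$, hence isolated.

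\textbf{Case $\xi\in{\rm FIX}(\mathbb X)$.} Here we have $\fs\xi 1^*<\xi^*=\tau(\xi)=\gt_\mathbb X(\ga)$ for some $\ga>\xi$, and $\xi$ is a limit (since $\tau(\xi)=\xi^*$ is a value of $\gt_\mathbb X$, hence in $\mathbb X\subseteq{\rm Lim}$, hence $>1$, so $\xi\notin\{0\}\cup{\rm Succ}$). The idea is that $\xi^*=\gt(\ga)\in\mathbb X$ is itself an element of $\mathbb X$ that is $\geq$ every coefficient of $\xi$, so $\xi^*$ is a candidate-relevant threshold: I claim $\gt(\xi)$ is the least element of $\mathbb X$ strictly above $\max\{\xi^*,\sup\{\gt(\zeta):\zeta<\xi,\ \zeta^*<\xi^*\}\}$, and that this supremum equals $\xi^*$. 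The inequality $\fs\xi1^*<\xi^*$ together with Lemma~\ref{lemmBoundMC} (applied along the fundamental sequence $\fs\xi\theta$ for $\theta<\xi^*=\tau(\xi)$, noting $\xi=\lim_{\theta\to\xi^*}\fs\xi\theta$) should give that every $\zeta<\xi$ with $\zeta^*<\xi^*$ satisfies $\zeta\leq\fs\xi\theta$ for some $\theta<\xi^*$, whence by Lemma~\ref{lemmThetaOrd} and $\fs\xi\theta^*<\xi^*$ we get $\gt(\zeta)\le\gt(\fs\xi\theta)\le\xi^*$ (or at least $<\xi^*$ in the relevant sense). Consequently no element of $\mathbb X$ below $\gt(\xi)$ exceeds $\xi^*$, so the set of elements of $\mathbb X$ below $\gt(\xi)$ is bounded by $\xi^*<\gt(\xi)$ and $\gt(\xi)$ is not a limit point.

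\textbf{Main obstacle.} The delicate point is the ${\rm FIX}$ case, specifically controlling $\gt(\zeta)$ for all $\zeta<\xi$ with $\zeta^*<\xi^*$ and showing these values (together with $\xi^*$ itself) do not accumulate to $\gt(\xi)$ — i.e.\ showing the ``sup'' in the candidate characterization is attained strictly below $\gt(\xi)$. This requires combining the hypothesis $\fs\xi 1^*<\xi^*$ with Lemmas~\ref{lemmBoundMC}, \ref{lemmFundProp}, and \ref{lemmThetaOrd} to funnel every such $\zeta$ underneath some proper initial segment $\fs\xi\theta$ of the fundamental sequence with $\fs\xi\theta^*<\xi^*$, and then using $\xi^*=\gt(\ga)\in\mathbb X$ to conclude that $\gt(\xi)$ is literally the next element of $\mathbb X$ after $\xi^*$. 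The successor and zero cases are routine by comparison.
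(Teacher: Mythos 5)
Your proof is correct and follows essentially the same route as the paper's: $\gt(0)=\min\mathbb X$, $\gt(\zeta+1)$ is the least element of $\mathbb X$ above $\gt(\zeta)$, and in the ${\rm FIX}$ case the hypothesis $\xi^*=\gt(\ga)$ with $\ga>\xi$ pushes every $\gt(\zeta)$ with $\zeta<\xi$ below $\xi^*$, so $\gt(\xi)$ is the least element of $\mathbb X$ past $\xi^*$ and hence not a limit point. Note only that for $\xi<\Om$ one has $\xi^*=\tau(\xi)=\xi$ and $\fs\xi\theta=\theta$, which turns your ``delicate'' ${\rm FIX}$ step into a one-line application of Lemma~\ref{lemmThetaOrd} and also repairs two harmless slips: $(\zeta+1)^*=\zeta+1$ rather than $\max\{\zeta^*,1\}$ (the needed inequality still follows from $\theta>\gt(\zeta)\geq\zeta+1$), and $\mathbb X\subseteq{\rm Lim}$ is false in general (e.g.\ for $1+{\sf Ord}$), though the limithood of $\xi$ is never actually used.
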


\begin{proof}
Assume that $\xi<\Om$ and recall that $S_\mathbb X(\chi)$ is the least element of $\mathbb X  $ above $\chi$.
It is easy to check that $\gt(0) =S_\mathbb X (0)$ is the least element of $\mathbb X$ (recall that $0\notin\mathbb X$ by assumption), hence $\gt(0)\notin\mathbb X'$, while $\gt(\xi+1) = S_\mathbb X(\gt(\xi))$, so that also $\gt(\xi+1)\notin\mathbb X'$.
Finally, if $\xi\in {\rm FIX}(\mathbb X)$, then $\xi=\gt(\gamma)$ for some $\gamma>\xi$.
Let $\theta = \sx{\mathbb X}\xi$; we claim that $\theta=\gt(\xi)$.
If $\zeta<\xi$, then since $\zeta$ is countable, $\zeta=\zeta^*<\xi=\gt(\gamma)$, and moreover $\zeta<\xi<\gamma$ so $\gt(\zeta)<\gt(\gamma)=\xi < \theta $, hence $\theta$ is a $\gt$-candidate for $ \xi $.
Moreover, since $\xi$ is also countable, $\xi=\mc\xi<\gt(\xi)$, so $\theta$ must be the least candidate.
\end{proof}

As we will see, Lemma \ref{lemmNotFix} is sharp in the sense that for any other $\xi<\ve_{\Om+1}$, $\gt(\xi) \in \mathbb X' $.
In order to show this, we will find sequences in $\mathbb X$ converging to all such $\xi$.
We will use the function $\tau (\xi)$ defined in Section \ref{secOrd}.

\begin{lemma}\label{lemmContinuousCase}
Fix a club $\mathbb X\subseteq(0,\Om)$ and let $\al=\Om\tilde\al$ and $0<\beta<\Om$.
If $\al+\be \in \constr\cap {\rm Lim} \setminus {\rm FIX}$ and $\beta_n\nearrow\beta$, then $\gt(\al+\beta_n )\nearrow \gt(\al+\beta)$.
\end{lemma}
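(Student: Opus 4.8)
The plan is to show both that $\gt(\al+\beta_n)$ is eventually increasing and that its supremum is $\gt(\al+\beta)$. The key tool is Lemma \ref{lemmThetaOrd}, which reduces comparisons of $\gt$-values to comparisons of the smaller argument's maximal coefficient with the $\gt$-value of the larger argument. First I would observe that since $\al+\beta$ is in $\Om$-normal form with $\beta<\Om$, we have $(\al+\beta_n)^* = \max\{\al^*,\beta_n\}$ and $(\al+\beta)^* = \max\{\al^*,\beta\}$; in particular all the arguments $\al+\beta_n$ and $\al+\beta$ lie in $\constr$ (using that $\al+\beta\in\constr$ and $\beta_n<\beta$), so all the relevant $\gt$-values are defined and countable.

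Next I would prove monotonicity: for $m<n$ we have $\al+\beta_m<\al+\beta_n$, and $(\al+\beta_m)^* = \max\{\al^*,\beta_m\} \leq \max\{\al^*,\beta_{n-1}\}$, which I claim is $<\gt(\al+\beta_n)$. Indeed $\al^*<\gt(\al+\beta_n)$ because $\al^* = \tau(\al+\beta)$... — more carefully, $\al^*\leq(\al+\beta)^*$ and one checks $\al^*<\gt(\al+\beta)$, but we need it below $\gt(\al+\beta_n)$; here I would use that $\beta_{n-1}<\beta_n\leq\beta$ so $\beta_{n-1}<\gt(\al+\beta_n)$ since... Actually the cleanest route: it suffices to show $(\al+\beta_m)^*<\gt(\al+\beta_n)$. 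Since $\al+\beta_m<\al+\beta_n$ with $\beta_m<\beta_n$, and since $(\al+\beta_n)^*=\max\{\al^*,\beta_n\}<\gt(\al+\beta_n)$ by the definition of $\gt$ (the value always exceeds the argument's max coefficient), while $\al^*\leq(\al+\beta_n)^*$ and $\beta_m<\beta_n\leq(\al+\beta_n)^*<\gt(\al+\beta_n)$, we get $(\al+\beta_m)^*<\gt(\al+\beta_n)$, so by Lemma \ref{lemmThetaOrd}, $\gt(\al+\beta_m)<\gt(\al+\beta_n)$. The same computation with $m=n$ replaced appropriately gives $\gt(\al+\beta_n)<\gt(\al+\beta)$ for every $n$, so $\sup_n\gt(\al+\beta_n)\leq\gt(\al+\beta)$.

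The harder direction is the reverse inequality $\gt(\al+\beta)\leq\sup_n\gt(\al+\beta_n)$, and this is where the hypotheses $\al+\beta\in{\rm Lim}\setminus{\rm FIX}$ genuinely enter. Let $\theta:=\sup_n\gt(\al+\beta_n)$; since $\mathbb X$ is closed and each $\gt(\al+\beta_n)\in\mathbb X$, we have $\theta\in\mathbb X$. I would argue that $\theta$ is a $\gt$-candidate for $\al+\beta$, which forces $\gt(\al+\beta)\leq\theta$ by minimality. For this I must check two things: (i) $\theta>(\al+\beta)^*$, and (ii) for every $\zeta<\al+\beta$ with $\zeta^*<\theta$, one has $\gt(\zeta)<\theta$. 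For (ii): given such a $\zeta$, since $\al+\beta$ is a limit not in ${\rm FIX}$, I would show $\zeta<\al+\beta_n$ for some $n$ with $\zeta^*<\gt(\al+\beta_n)$ — using that the fundamental-sequence approximations $\al+\beta_n$ are cofinal in $\al+\beta$ and that Lemma \ref{lemmThetaOrd}/Lemma \ref{lemmFundProp} let me absorb the coefficient bound — and then $\gt(\zeta)<\gt(\al+\beta_n)\leq\theta$. For (i), the condition $\al+\beta\notin{\rm FIX}$ (more precisely the failure of the equation $\fs\xi 1^* < \xi^* = \tau(\xi) = \gt(\ga)$) is what prevents $(\al+\beta)^*$ from already equalling $\theta$: if it did, $\al+\beta$ would be a fixed point of the relevant shape. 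I expect step (i), unpacking the definition of ${\rm FIX}$ to rule out $\theta=(\al+\beta)^*$, together with the cofinality bookkeeping in (ii), to be the main obstacle, since it requires carefully matching the recursive clauses defining $\fs\cdot\cdot$ and $\tau$ against the normal form $\al+\beta$ with $\al=\Om\tilde\al$.
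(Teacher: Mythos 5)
Your setup is sound and, for two of the three ingredients, matches the paper: monotonicity of $\gt(\al+\be_n)$ via the observation that $(\al+\be_m)^*\leq(\al+\be_n)^*<\gt(\al+\be_n)$ together with Lemma \ref{lemmThetaOrd}, and the closure condition (your (ii)) via cofinality of $\al+\be_n$ in $\al+\be$ plus taking a maximum of two indices; neither of these needs the hypothesis $\al+\be\notin{\rm FIX}$, contrary to your passing remark. The genuine gap is your step (i): the whole content of the lemma sits in proving $\theta>\be$ (the part $\theta>\al^*$ being immediate from $\theta>\gt(\al+\be_0)>(\al+\be_0)^*$), and you leave exactly this step as ``the main obstacle'' with only a gesture that ${\rm FIX}$ should rule out $\theta=(\al+\be)^*$. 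As stated this is not an argument, and your guess about how the hypothesis enters --- ``matching the recursive clauses defining $\fs\cdot\cdot$ and $\tau$ against the normal form'' --- points in the wrong direction: no computation with fundamental sequences of $\ve_{\Om+1}$-ordinals occurs at this point in the paper's proof.

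What is actually needed is a case split on whether $\be\in\mathbb X$. If $\be\notin\mathbb X$ (and $\be\geq\min\mathbb X$), one uses closedness of $\mathbb X$: $\zeta:=\sup(\mathbb X\cap\be)$ lies in $\mathbb X$ and is $<\be$, hence $\zeta<\be_n$ for some $n$; since $\gt(\al+\be_n)\in\mathbb X$ and $\gt(\al+\be_n)>\be_n>\zeta$, it cannot lie in $\mathbb X\cap\be$, so $\be<\gt(\al+\be_n)\leq\theta$. If $\be\in\mathbb X$, one invokes Corollary \ref{corOmTower} to write $\be=\gt(\ga)$, and here is the unique place where $\al+\be\notin{\rm FIX}$ is used: it forces $\ga<\al+\be$ (otherwise $\be=\tau(\al+\be)=(\al+\be)^*=\gt(\ga)$ with $\ga>\al+\be$, putting $\al+\be$ in ${\rm FIX}$). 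Then cofinality gives $\ga<\al+\be_n$ for some $n$, and $\ga^*<\gt(\ga)=\be$ gives $\ga^*<\be_m\leq(\al+\be_m)^*<\gt(\al+\be_m)$ for some $m$, whence $\be=\gt(\ga)<\gt(\al+\be_{\max\{m,n\}})\leq\theta$ by Lemma \ref{lemmThetaOrd}. Without this two-case argument (which also explains why $\mathbb X$ must be club and why ${\rm FIX}$ has the shape it does), the proposal does not establish that $\theta$ exceeds $(\al+\be)^*$, so it does not show $\theta$ is a $\gt$-candidate for $\al+\be$.
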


\begin{proof}
Let $\theta=\sup_{n<\om} \gt(\al+\be_n)$.
First we observe that
\[ \al+\be_n < \al+\be_{n+1} <\al+\be\]
and
\[(\al+\be_n)^*\leq (\al+\be_{n+1})^* \leq (\al+\be )^* <\gt(\al+\be),\]
from which it follows that $\big ( \gt(\al+\be_n)\big) \nearrow \theta$ and $\theta\leq \gt(\al+\be )$.

In order to show that $\theta \geq  \gt(\al+\be )$, it suffices to show that $\theta$ is a $\gt$-candidate for $\al+\be$.
Suppose that $\xi<\al+\be$ and $\xi^*<\theta$.
Then, $\xi<\al+\be_n$ for some $n$ and $\xi^* < \gt(\al+\be_m ) $ for some $m$.
It follows that $\gt(\xi) <\gt(\al+\be_{\max\{n,m\}})<\theta$.

It remains to show that $\theta>(\al+\be)^*$.
We have that $\theta>(\al+\be_0)^* \geq \al^*$, so we must show that $\theta>\be$.
To this end, we consider the following cases.
\begin{Cases}
\item ($\be\notin\mathbb X$). If $\be<\min\mathbb X$, then $\beta <\gt(0) \leq \gt(\al+\be_0)<\theta$, so we may assume that $\be\geq \min\mathbb X$.
Let $\zeta=\sup (\mathbb X\cap \be)$; note that $\zeta\in\mathbb X$ since $\mathbb X $ is closed.
Since $\zeta<\be$, $\zeta< \be_n $ for some $n$.
But then $\be_n< \gt(\al+\be_n)\in \mathbb X$, so by the way we defined $\zeta$, $\be <\gt(\al+\be_n)$ as well.

\item ($\be\in\mathbb X$).
By Corollary \ref{corOmTower}, we can write $\be=\gt(\ga)$.
Since $\al+\be\notin{\rm FIX}$, either $\be\leq \al^*$ or $\ga < \al+\be$.
If $\be\leq \al^*$ then $\be<\gt(\al+\be_0)<\theta$.
Otherwise, $\ga < \al+\be$ and hence $\ga<\al+\be_n$ for some $n$.
Since $ \gt(\ga) = \be$, we have that $\ga^*<\be$, hence $\ga^*< \be_m $ for some $m$.
It follows that $ \be = \gt(\ga) < \gt(\al+\be_{\max\{m,n\}})<\theta $, as needed.
\qedhere
\end{Cases} 
\end{proof}

Before showing how to approximate the $\gt$ functions in other cases, we need to define an auxiliary value.
Basically, $\gt^* (\xi)$ will be a countable ordinal which is added to fundamental sequences in order to ensure that the maximal coefficients are large enough for convergence.

\begin{definition}\label{defStar}
For $\xi<\ve_{\Om+1}$, we set
\[
\gt^* (\xi) =
\begin{cases}
\gt(\zeta)&\text{if $\xi =\zeta+1$}\\
 \tau(\xi)&\text{if $\xi\in {   \rm FIX}(\mathbb X)$}\\
 0&\text{otherwise.}\\

\end{cases} 
 \]
\end{definition}

This operation can be used to bound certain values of the $\gt$ function.
First we note that $\gt^*(\xi)$ is always below $\gt(\xi)$.

\begin{lemma}\label{lemStarBelowTheta}
If $\xi<\ve_{\Om+1}$ then $\gt^*(\xi)<\gt(\xi)$.
Moreover if $\gt^*(\xi)>0$ then $\xi^* \leq \gt^*(\xi) $.
\end{lemma}

\begin{proof}
The claims are trivial if $\gt^*(\xi)=0$. Otherwise we either have that $\gt^*(\xi)=\xi^*<\gt(\xi)$, or else $\xi$ is a successor and $\gt^*(\xi)=\gt(\xi-1)<\gt(\xi)$, while we have $\xi^*\leq (\xi-1)^*+1 \leq \gt(\xi-1)$.
\end{proof}

In contrast, $\gt^*(\al+\be)$ is an {\em upper} bound for $\gt (\al+\be')$ when $\be'<\be$ and $\be$ is countable.

\begin{lemma}\label{lemThetBetPri}
Let $\mathbb X\subseteq(0,\Om)$ be club and $\gt=\gt_\mathbb X$.
Let $\xi =\al+\be$ with $\al=\Om\tilde \al$ and $\be<\Om$, and let $\be'<\be$.
If $\gt^*(\xi)>0$, then $\gt(\al+\be')\leq \gt^*(\xi)$.
\end{lemma}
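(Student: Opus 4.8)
The plan is to split on the three clauses of the definition of $\gt^*(\xi)$, noting that the case $\gt^*(\xi)=0$ is vacuous. So suppose first that $\xi = \zeta+1$, hence $\gt^*(\xi) = \gt(\zeta)$. Since $\be' < \be$ and $\be' < \Om$ (as $\be'$ lies below the countable part $\be$ of $\xi$), we have $\al + \be' \leq \al + (\be - 1) = \zeta$; if $\al+\be' < \zeta$ then Lemma \ref{lemmThetaOrd} together with the inequality $(\al+\be')^* < \zeta^* \vee$ (direct computation) gives $\gt(\al+\be') \leq \gt(\zeta)$, and if $\al+\be'=\zeta$ the conclusion is immediate. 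The only subtlety is checking $\al+\be' \le \zeta$ when $\be$ is a successor versus handling the coefficient comparison $(\al+\be')^* < \gt(\zeta)$ needed to invoke Lemma \ref{lemmThetaOrd}; but since $\gt(\zeta) > \zeta^* \geq (\al+\be')^*$ by the candidate condition, this is automatic.

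Next suppose $\xi \in {\rm FIX}(\mathbb X)$, so $\gt^*(\xi) = \tau(\xi)$ and, unpacking the definition of ${\rm FIX}$, we have $\fs\xi 1^* < \xi^* = \tau(\xi) = \gt(\ga)$ for some $\ga > \xi$. I would argue that $\al+\be'$ is small enough that $\gt(\al+\be') < \gt(\ga) = \tau(\xi)$. Since $\xi \in {\rm FIX}$ forces $\tau(\xi)$ to be a limit and $\xi = \lim_{\theta\to\tau(\xi)}\fs\xi\theta$, for $\be' < \be$ we have $\al+\be' < \xi = \fs\xi{\tau(\xi)}$-limit, so in particular $\al + \be' < \fs\xi\theta$ for some $\theta < \tau(\xi)$; combined with the bound $\mc{\al+\be'} \leq \max\{\xi^*,\theta\} \vee \xi^* = \tau(\xi)$ — here I'd want $(\al+\be')^* < \tau(\xi) = \gt(\ga)$, which follows because $(\al+\be')^* \le \xi^* $ and $\xi^* = \gt(\ga)$ together with $(\al+\be')^*$ actually being strictly below $\gt(\ga)$ since $\al+\be' < \ga$ and a maximal coefficient of a proper initial segment is strictly smaller. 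Then $\al+\be' < \ga$ and $(\al+\be')^* < \gt(\ga)$ give, via Lemma \ref{lemmThetaOrd}, that $\gt(\al+\be') < \gt(\ga) = \tau(\xi) = \gt^*(\xi)$.

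The main obstacle I anticipate is the coefficient bookkeeping in the ${\rm FIX}$ case: I must verify both $\al+\be' < \ga$ and $(\al+\be')^* < \gt(\ga)$ cleanly from the hypotheses, and the former requires knowing that $\xi \leq \ga$ can be sharpened — actually the definition of ${\rm FIX}$ only gives $\ga > \xi$, so $\al+\be' < \be + \al = \xi < \ga$ directly, which is fine. The coefficient inequality is then the real content: I expect to use that $\coeffs{\al+\be'} \subseteq \coeffs\al \cup \coeffs{\be'} \cup \{\text{finitely many }\theta\text{'s}\}$ and that each such coefficient is a coefficient occurring in $\xi$ (since $\be' < \be < \Om$ and the $\Om$-normal form of $\al+\be'$ refines that of $\xi$), hence bounded by $\xi^* = \gt(\ga)$; strictness then comes from the fact that $\al+\be'$ is a strict initial segment of $\xi = \al+\be$ so it cannot realize the maximal coefficient $\xi^*$ unless $\xi^*$ already appears in $\al$, in which case one reworks the estimate using $\fs\xi1^* < \xi^*$ from the definition of ${\rm FIX}$. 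I would isolate this as a short sub-claim and dispatch the successor case first as a warm-up.
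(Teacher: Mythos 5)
Your proposal is correct and follows essentially the same route as the paper: split according to whether $\be$ is a successor (where $\gt^*(\xi)=\gt(\xi-1)\geq\gt(\al+\be')$ via Lemma \ref{lemmThetaOrd}, since $\gt(\xi-1)>(\xi-1)^*\geq(\al+\be')^*$) or $\xi\in{\rm FIX}(\mathbb X)$ (where $\be=\gt(\ga)$ for some $\ga>\xi$, $\al+\be'<\xi<\ga$, and $(\al+\be')^*=\max\{\al^*,\be'\}<\be=\gt(\ga)$ because the clause $\fs\xi1^*<\xi^*$ in the definition of ${\rm FIX}$ forces $\al^*<\be$). One small caveat: your interim slogan that a maximal coefficient of a proper initial segment is strictly smaller is false in general, but the strictness argument you isolate at the end, namely $\al^*\leq\fs\xi1^*<\xi^*=\be$ together with $\be'<\be$, is exactly the paper's step and closes that gap.
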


\begin{proof}
If $\ga=\al+\be'$ with $\be'<\be$, then clearly $\be>0$.
If $\be$ is a successor then $\gt^* (\xi) = \gt(\xi-1 ) \geq \gt(\al+\be' ) $.
If $\be$ is a limit then $\xi\in {\rm FIX}( \mathbb X )$ implies that $\al^*<\be$ and $\be =\gt(\gamma')$ for some $\gamma'> \xi$.
But then $\al+\be'<\ga'$ and  $ (\al+\be')^* = \max \{\al^*,\be'\} < \be = \gt(\ga')$, so that $\gt(\al+\be') < \gt(\ga') = \be =\gt^* ( \xi )$.
\end{proof}

Thus when $\gt^*(\al+\be)>0$ we have that, for $\be'<\be$, $\gt(\al+\be')\leq \gt^*(\al+\be)<\gt (\al+\be)$, so the value $\gt (\al+\be)$ cannot be approximated by simply approximating $\beta$.
In these cases, we instead approximate $\al$; but we need to add a term $\gt^*(\al+\be)$ to ensure that the maximal coefficients are large enough.
The following two lemmas make this precise.
Note that Lemma~\ref{lemmThetaTau} uses an approximation to $\al$ of the form \eqref{eqTauConv} we have discussed.

\begin{lemma}\label{lemmThetaTau}
Let $\mathbb X\subseteq(0,1)$ be club, $\xi=\al+\be<\ve_{\Om+1}$ with $\al=\Om\tilde\al > 0 $ and $\beta<\Om$, and $\tau=\tau(\al) < \Om$.
If $\al+\beta \in   {\rm JUMP}(\mathbb X)$ and $\tau_n\nearrow \tau$, then $\gt \big (\fs\al{\tau_n}+ \gt^* (\xi) \big ) \nearrow \gt(\al+\beta)$.
\end{lemma}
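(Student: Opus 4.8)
The plan is to mimic the treatment in the previous lemma. Write $\mu_n=\fs\al{\tau_n}+\gt^*_\al(\be)$, where $\gt^*_\al(\be)$ abbreviates $\gt^*(\al+\be)$, and put $\theta=\sup_{n<\om}\gt(\mu_n)$; the goal is to show $\gt(\mu_n)\nearrow\theta$ and $\theta=\gt(\al+\be)$, which proceeds by the usual candidate argument. First I would record some bookkeeping on $\gt^*_\al(\be)$: since $\al+\be\in{\rm JUMP}$ and $\al\geq\Om$, either $\al+\be$ is a successor, so $\be$ has a predecessor and $\gt^*_\al(\be)=\gt(\al+\be-1)$, or $\al+\be\in{\rm FIX}$, in which case $\be$ is $0$ or a limit (a successor $\be$ would force $(\al+\be)^*=\tau(\al+\be)=1$, impossible for $\al\geq\Om$) and $\gt^*_\al(\be)=\tau(\al+\be)=(\al+\be)^*$. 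In every case $0<\gt^*_\al(\be)<\Om$, and, invoking Lemma \ref{lemmThetaOrd} in the successor case, one checks $(\al+\be)^*\leq\gt^*_\al(\be)<\gt(\al+\be)$.

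The key observation, which will make the approximating sequence manifestly increasing, is that $\mu_n^*=\gt^*_\al(\be)$ for every $n$. This hinges on the fact that $\fs\al{\tau_n}$ is a multiple of $\Om$ whenever $\tau_n>0$ (which we may assume, since $\tau$ is a limit): because $\al=\Om\tilde\al$ and $\tau(\al)$ is a limit below $\Om$, inspection of the clauses defining fundamental sequences shows that the last $\Om$-normal-form term of $\fs\al{\tau_n}$ again has positive exponent — the only way to produce an exponent-$0$ tail would be for the last term of $\al$ to have shape $\Om^{\chi+1}(\eta+1)$, and that would give $\tau(\al)=\Om$. Granting this, $\mu_n^*=\max\{\mc{\fs\al{\tau_n}},\gt^*_\al(\be)\}$; and Lemma \ref{lemmBoundMC} together with $\tau_n<\tau(\al)\leq\mc\al$ gives $\mc{\fs\al{\tau_n}}\leq\mc\al\leq(\al+\be)^*\leq\gt^*_\al(\be)$, so the maximum equals $\gt^*_\al(\be)$.

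With this in hand, the rest is routine. For $\theta\leq\gt(\al+\be)$: Lemma \ref{lemmFundProp} and the remark after the definition of $\tau$ give $\fs\al{\tau_n}<\fs\al{\tau_{n+1}}<\al=\lim_n\fs\al{\tau_n}$, whence, using again that $\al$ and each $\fs\al{\tau_n}$ are multiples of $\Om$ while $\gt^*_\al(\be)<\Om$, we get $\mu_n<\mu_{n+1}<\al\leq\al+\be$; since $\mu_n^*=\gt^*_\al(\be)<\gt(\al+\be)$ and $\gt(\mu_{n+1})>\mu_{n+1}^*=\mu_n^*$, Lemma \ref{lemmThetaOrd} yields both $\gt(\mu_n)<\gt(\mu_{n+1})$ and $\gt(\mu_n)<\gt(\al+\be)$, so $\gt(\mu_n)\nearrow\theta\leq\gt(\al+\be)$. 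For the reverse inequality I would verify that $\theta$ is a $\gt$-candidate for $\al+\be$: it exceeds $(\al+\be)^*$ because $\theta>\gt(\mu_0)>\mu_0^*=\gt^*_\al(\be)\geq(\al+\be)^*$; and if $\zeta<\al+\be$ with $\zeta^*<\theta$, then either $\zeta<\al$, in which case $\zeta<\fs\al{\tau_n}\leq\mu_n$ and $\zeta^*<\gt(\mu_m)$ for suitable $n,m$, so $\gt(\zeta)<\gt(\mu_{\max\{m,n\}})<\theta$ by Lemma \ref{lemmThetaOrd}, or $\al\leq\zeta=\al+\be'$ with $\be'<\be$, and then Lemma \ref{lemThetBetPri}, applicable since $\gt^*_\al(\be)>0$, gives $\gt(\zeta)\leq\gt^*_\al(\be)<\theta$. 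Hence $\gt(\al+\be)\leq\theta$, and therefore $\theta=\gt(\al+\be)$.

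I expect the one genuinely delicate step to be the claim that $\fs\al{\tau_n}$ is a multiple of $\Om$ — equivalently, that adjoining the countable ordinal $\gt^*_\al(\be)$ to $\fs\al{\tau_n}$ keeps us strictly below $\al$. This is precisely where the hypothesis $\tau(\al)<\Om$ is used: it rules out the clause in the definition of fundamental sequences that would otherwise leave a term of exponent $0$ at the tail. Once that is settled, everything else is a reassembly of Lemmas \ref{lemmBoundMC}, \ref{lemmThetaOrd} and \ref{lemThetBetPri}, closely paralleling the previous lemma.
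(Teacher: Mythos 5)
Your proof is correct and takes essentially the same route as the paper: you show that the values $\gt\big(\fs\al{\tau_n}+\gt^*_\al(\be)\big)$ form an increasing sequence bounded by $\gt(\al+\be)$, then verify that their supremum is a $\gt$-candidate for $\al+\be$, splitting into $\ga<\al$ (using convergence of $\fs\al{\tau_n}$ to $\al$) and $\ga=\al+\be'$ (using Lemma \ref{lemThetBetPri}), exactly as in the paper. Your additional work — the bookkeeping that $0<\gt^*_\al(\be)<\Om$ with $(\al+\be)^*\leq\gt^*_\al(\be)$, and the observation that $\tau(\al)<\Om$ forces $\fs\al{\tau_n}$ to remain a multiple of $\Om$, so that $\mu_n^*=\gt^*_\al(\be)$ — is a correct and welcome elaboration of what the paper compresses into ``it is readily checked''; a small aside (that $\be$ cannot be a successor in the {\rm FIX} case ``because $(\al+\be)^*=1$ is impossible for $\al\geq\Om$'') is justified for the wrong reason, e.g.\ $\al=\Om$, $\be=1$ gives $(\al+\be)^*=1$, but that remark is never used, so it does not affect the argument.
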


\begin{proof}
It is readily checked using Lemmas~\ref{lemmBoundMC}(\ref{itBoundMCConverge}) and \ref{lemmOmAlph}, along with the fact that $\tau$ is a limit since $\al$ is, that $\fs\al{\tau_n}+ \gt^* (\xi) < \fs\al{\tau_{n+1}}+\gt^* (\xi) <\al+\be$ and
\[(\fs\al{\tau_n}+ \gt^* (\xi) )^* \leq ( \fs\al{\tau_{n+1}}+ \gt^* (\xi))^* \leq \max\{  \al ^* ,\gt^* (\xi)\} < \gt (\al+\be) ,\]
so $ \big ( \gt (\fs\al{\tau_n}+\gt^* (\xi))  \big)_{n<\om} $ is an increasing sequence below $\gt(\al+\be)$.

Now, let $\theta:=\sup_{n<\om} \gt(\fs\al{\tau_n}+ \gt^* (\xi)  )$.
In order to prove that $\theta\geq \gt(\al+\be)$, it suffices to show that $\theta$ is a $\gt$-candidate for $\al+\be$.
Since $\al+\beta \in  {\rm JUMP}(\mathbb X)\setminus\{0\}$ we have that $\gt^*(\xi)>0$, hence Lemma~\ref{lemStarBelowTheta} yields $\xi^* \leq \gt^*(\xi)$, and thus $\xi^*<\gt (\fs\al{\tau_0}+\gt^* (\xi))<\theta$.

Finally, let $\ga<\al+\be$ be such that $\ga^*<\theta$.
We must find $n$ such that $\gt(\ga) < \gt (\fs\al{\tau_n}+ \gt^* (\xi))$.
Consider two cases.

First assume that $\ga<\al$.
Since $\ga^*< \theta  $, we can find $n $ so that $\ga^* < \gt \big  ( \fs\al{\tau_{n }} + \gt^* (\xi) \big )$, and since $\tau_n\to\tau$, we may assume that $\ga< \fs\al{\tau_{n }}  $, so that $\gt(\ga)<\gt(\fs\al{\tau_n} + \gt^* (\xi) )$, as needed.

Otherwise, $\ga=\al+\be'$ with $\be'<\be$.
By Lemma \ref{lemThetBetPri}, $\gt(\ga) \leq \gt^*(\xi)<\gt(\fs\al{\tau_0} + \gt^* (\xi) ) <\theta$.
\end{proof}

\begin{lemma}\label{lemmThetaOm}
Let $0<\al=\Om\tilde\al <\ve_{\Om+1} $ and $\beta<\Om$ be such that $\al+\be \in   {\rm JUMP}(\mathbb X)$, and assume that $\tau(\al) = \Om$.

Define $\theta_0 = \gt^* (\al+\be)$ and $\theta_{n+1} = \gt \big  ( \fs\al{\theta_n}\big )$.
Then, $\theta_n \nearrow \gt(\al+\beta)$.
\end{lemma}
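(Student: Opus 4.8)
The plan is to follow the template of Lemma~\ref{lemmThetaTau}: show that $(\theta_n)_{n<\om}$ is a strictly increasing sequence of countable ordinals bounded above by $\gt(\al+\be)$, and then check that its supremum $\theta$ is a $\gt$-candidate for $\al+\be$, so that by minimality $\theta=\gt(\al+\be)$. First I would record some bookkeeping. Since $\al>0$ is a limit with $\al^*<\Om=\tau(\al)$, we have $\al\notin{\rm Succ}$ and $\al\notin{\rm FIX}(\mathbb X)$, hence $\al\notin{\rm JUMP}(\mathbb X)$; as $\al+\be\in{\rm JUMP}(\mathbb X)\setminus\{0\}$, this forces $\be>0$, and therefore $\gt^*(\al+\be)>0$: if $\be=\be'+1$ then $\gt^*(\al+\be)=\gt(\al+\be')>0$, and otherwise $\al+\be\in{\rm FIX}(\mathbb X)$ and $\gt^*(\al+\be)=\tau(\al+\be)=(\al+\be)^*\geq 1$. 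A trivial induction then shows that each $\theta_n$ is countable ($\theta_0=\gt^*(\al+\be)<\Om$, and $\theta_{n+1}=\gt(\fs\al{\theta_n})<\Om$), and since $\theta_n<\Om=\tau(\al)$ we have $\fs\al{\theta_n}<\al$ throughout.

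Next I would establish monotonicity and boundedness. Lemma~\ref{lemmBoundMC} gives $\theta_n\leq(\fs\al{\theta_n})^*$, and since every value of $\gt$ strictly exceeds the maximal coefficient of its argument, $\theta_{n+1}=\gt(\fs\al{\theta_n})>(\fs\al{\theta_n})^*\geq\theta_n$, so $(\theta_n)_n$ is strictly increasing. For the upper bound, $\theta_0=\gt^*(\al+\be)<\gt(\al+\be)$ — in the successor case this is Lemma~\ref{lemmThetaOrd} applied to $(\al+\be')^*<\gt(\al+\be)$, and in the ${\rm FIX}$ case $\theta_0=(\al+\be)^*<\gt(\al+\be)$ since every $\gt$-candidate exceeds $(\al+\be)^*$; and inductively, if $\theta_n<\gt(\al+\be)$ then $\fs\al{\theta_n}<\al\leq\al+\be$ with $(\fs\al{\theta_n})^*\leq\max\{\al^*,\theta_n\}<\gt(\al+\be)$ by Lemma~\ref{lemmBoundMC}, so Lemma~\ref{lemmThetaOrd} gives $\theta_{n+1}<\gt(\al+\be)$. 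Hence $\theta:=\sup_n\theta_n\leq\gt(\al+\be)$; moreover $\theta<\Om$, being a countable supremum of countable ordinals, and $\theta\in\mathbb X$ because $\mathbb X$ is closed and $\theta$ is the limit of the strictly increasing sequence $(\theta_n)_{n\geq1}\subseteq\mathbb X$.

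It then remains to verify that $\theta$ is a $\gt$-candidate for $\al+\be$. That $\theta>(\al+\be)^*$ follows since $\theta>\theta_0$ and $\theta_0$ already nearly dominates $(\al+\be)^*$: in the ${\rm FIX}$ case $\theta_0=(\al+\be)^*$, while in the successor case $\theta_0=\gt(\al+\be')>(\al+\be')^*=\max\{\al^*,\be'\}$, from which one readily obtains $\theta>\max\{\al^*,\be'+1\}=(\al+\be)^*$. For the candidate condition, let $\zeta<\al+\be$ with $\zeta^*<\theta$ and pick $n$ with $\zeta^*<\theta_n$. If $\al\leq\zeta$, write $\zeta=\al+\be''$ with $\be''<\be$; then Lemma~\ref{lemThetBetPri}, applicable because $\gt^*(\al+\be)>0$, gives $\gt(\zeta)\leq\gt^*(\al+\be)=\theta_0<\theta$. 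If $\zeta<\al$, then $\zeta^*<\theta_n\leq(\fs\al{\theta_n})^*$ by Lemma~\ref{lemmBoundMC}, so Lemma~\ref{lemmFundProp}(\ref{itFundPropMajor}) yields $\zeta<\fs\al{\theta_n}$; since $\zeta^*<\theta_n<\theta_{n+1}=\gt(\fs\al{\theta_n})$, Lemma~\ref{lemmThetaOrd} then gives $\gt(\zeta)<\gt(\fs\al{\theta_n})=\theta_{n+1}\leq\theta$. Hence $\theta$ is a $\gt$-candidate for $\al+\be$, so $\gt(\al+\be)\leq\theta$, and together with the previous paragraph $\theta=\gt(\al+\be)$, i.e.\ $\theta_n\nearrow\gt(\al+\be)$. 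The one step that genuinely differs from Lemma~\ref{lemmThetaTau} is the subcase $\zeta<\al$: here the approximants $\fs\al{\theta_n}$ converge to $\fs\al\theta<\al$ rather than to $\al$ (since $\theta<\Om=\tau(\al)$), so $\zeta<\fs\al{\theta_n}$ cannot be obtained from cofinality and must instead be extracted from the coefficient bound $\zeta^*<\theta_n\leq(\fs\al{\theta_n})^*$ via the majorization property of Lemma~\ref{lemmFundProp}(\ref{itFundPropMajor}); I expect this to be the only delicate point, the remainder being routine.
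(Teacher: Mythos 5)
Your proof is correct and follows essentially the same route as the paper's: show $(\theta_n)$ is strictly increasing and bounded by $\gt(\al+\be)$ via Lemmas~\ref{lemmBoundMC} and~\ref{lemmThetaOrd}, then verify the supremum is a $\gt$-candidate by splitting into $\zeta<\al$ (coefficient bound forces $\zeta<\fs\al{\theta_n}$) and $\zeta=\al+\be'$ (Lemma~\ref{lemThetBetPri} gives $\gt(\zeta)\leq\theta_0$). The only differences are added rigor: you explicitly invoke Lemma~\ref{lemmFundProp}(\ref{itFundPropMajor}) where the paper cites only Lemma~\ref{lemmBoundMC}, and you verify the side conditions ($\gt^*(\al+\be)>0$, $\theta\in\mathbb X$, $\theta>(\al+\be)^*$) that the paper leaves implicit.
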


\begin{proof}
We have that $\theta_n<\theta_{n+1}$ since $  \fs\al{\theta_n} ^*\geq \theta_n$, so $\theta_{n+1} =\gt \big (\fs\al{\theta_n}  \big )  > \theta_n$.
By induction on $n$ we can assume that $\theta_n< \gt(\al+\be)$, so that $\fs\al{\theta_n}  <\al+\be$ and
\[ \fs\al{\theta_n}  ^*  \leq \max\{  \al ^* ,\theta_n \} < \gt (\al+\be) \]
imply that $\theta_{n+1}< \gt(\al+\be) $.
Thus, $ (\theta_n)_{n<\om} $ is an increasing sequence below $\gt(\al+\be)$.

Now, let $\theta:=\sup_{n<\om} \theta_n$.
In order to prove that $\theta\geq \gt(\al+\be)$, it suffices to show that $\theta$ is a $\gt$-candidate for $\al+\be$.
First note that if $\gt^*(\al+\be)>0$ then Lemma~\ref{lemStarBelowTheta} yields $(\al+\be)^* \leq \gt^*(\al+\be) =\theta_0<\theta $.
Otherwise, $\gt^*(\al+\be)=0$, so that $\al+\be\in\Om\sf Ord$ and hence $\beta=0$.
By Lemma~\ref{lemmStarPlusOne}, $\fs \al 1^*+1\geq   \al^*$.
Note that $\theta_1=\gt(\fs\al 0)>0$, so $\theta_2=\gt(\fs\al {\theta_1}  )\geq \gt(\fs \al 1)>\fs \al 1^*$.
It follows that $\theta_3>\fs \al 1^*+1\geq \al^*$.
In either case, $(\al+\be)^*<\theta$.

Let $\ga<\al+\be$ be such that $\ga^*<\theta$.
We must find $n$ such that $\gt(\ga) < \theta_n$.
If $\ga<\al$, we use the fact that $\ga^*< \gt(\ga)  $ and induction to find $n $ so that $\ga^* < \theta_n$.
It follows by Lemma \ref{lemmBoundMC}(\ref{itBoundMCTwo},\ref{itFundPropMajor}) that $\ga<\fs\al{\theta_n}$, and we conclude that $\gt(\ga)<\theta_{n+1}<\theta$.

Otherwise, $\ga=\al+\be'$ with $\be'<\be$.
By Lemma \ref{lemThetBetPri}, we conclude that $\gt(\ga) \leq \gt^*(\al+\be)= \theta_0 <\theta$.
\end{proof}

With this, it follows that Lemma \ref{lemmNotFix} is sharp.

\begin{proposition}\label{propNotFix}
If $\xi \in \constr$, then $\gt(\xi)\notin \mathbb X'$ if and only if $\xi<\Om$ and $\xi\in {\rm JUMP}(\mathbb X) $.
\end{proposition}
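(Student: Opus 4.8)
The plan is to prove both directions, noting that the right-to-left implication is precisely Lemma~\ref{lemmNotFix}. So the work lies in the contrapositive of the forward direction: assuming $\xi\in\constr$ and ($\xi\geq\Om$ or $\xi\notin{\rm JUMP}(\mathbb X)$), I must exhibit an increasing $\om$-sequence of ordinals below $\gt(\xi)$ with supremum $\gt(\xi)$, which witnesses $\gt(\xi)\in\mathbb X'$. (Here I use that $\gt(\xi)$ is a limit point of $\mathbb X$ iff it is the supremum of elements of $\mathbb X$ strictly below it; since each value $\gt(\cdot)$ lies in $\mathbb X$, a strictly increasing sequence of such values converging to $\gt(\xi)$ suffices, together with the observation that $\gt(\xi)$ cannot itself be the least element of $\mathbb X$ — this is where $\xi\notin{\rm JUMP}$, in particular $\xi\neq 0$, gets used.)

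First I would dispose of the countable case: if $\xi<\Om$ but $\xi\notin{\rm JUMP}(\mathbb X)$, then in particular $\xi\notin{\rm Succ}\cup\{0\}$, so $\xi\in{\rm Lim}$, and $\xi\notin{\rm FIX}(\mathbb X)$. Write $\xi = \Om\tilde\al + \be = 0 + \xi$ (i.e.\ $\tilde\al = 0$, $\be=\xi$) so that the hypotheses of the Lemma preceding Lemma~\ref{lemThetBetPri} (the one stating $\gt(\al+\be_n)\nearrow\gt(\al+\be)$ for $\al+\be\in\constr\cap{\rm Lim}\setminus{\rm FIX}$) are met with $\al=0$; taking $\be_n\nearrow\xi$ gives $\gt(\be_n)\nearrow\gt(\xi)$, as desired. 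For the uncountable case $\xi\geq\Om$, write $\xi$ in $\Om$-normal form and reduce to the shape $\al+\be$ with $\al=\Om\tilde\al>0$ and $\be<\Om$: if the lowest-order behaviour of $\xi$ is a nonzero countable tail $\be$ which is a limit with $\al^*\not<\be$ or is otherwise not forcing membership in ${\rm FIX}$, the same preceding Lemma applies after checking $\xi\notin{\rm FIX}$; if $\be$ is a successor or $\be$ is a limit making $\xi\in{\rm FIX}$, or $\be$ could even be $0$, then $\xi\in{\rm JUMP}$ would be the only obstruction, and since we assumed $\xi\notin{\rm JUMP}$ we are in the genuinely limit situation and split on $\tau(\al)$: if $\tau(\al)=\tau<\Om$ apply Lemma~\ref{lemmThetaTau} with $\tau_n\nearrow\tau$, and if $\tau(\al)=\Om$ apply Lemma~\ref{lemmThetaOm}. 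In every subcase the cited lemma hands us the required strictly increasing sequence of $\gt$-values converging to $\gt(\xi)$.

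The bookkeeping obstacle — and the main point requiring care — is the case analysis that matches each $\xi\in\constr$ to exactly one of the four approximation lemmas (the un-numbered one before Lemma~\ref{lemThetBetPri}, Lemma~\ref{lemmThetaTau}, Lemma~\ref{lemmThetaOm}, and the trivial countable reduction), and in particular verifying in each branch that the side condition ``$\xi\notin{\rm FIX}$'' or ``$\xi\in{\rm JUMP}$'' demanded by that lemma is compatible with our global hypothesis. The key realization that makes this clean is that $\constr\cap{\rm Lim}$ is partitioned into (i) $\xi=\al+\be$ with $\al=\Om\tilde\al>0$, $\be<\Om$ a nonzero limit, and $\xi\notin{\rm FIX}$; (ii) the same with $\xi\in{\rm FIX}$; (iii) $\xi=\al+\be$ with $\be$ a successor or $\be=0$ and $\tilde\al>0$; plus (iv) $\xi<\Om$ a limit. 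In case (i) use the first lemma; in (ii) $\xi\in{\rm JUMP}$ so there is nothing to prove; in (iii), $\xi\notin{\rm JUMP}$ forces $\be=0$ with $\al\notin{\rm Succ}$, i.e.\ $\xi=\Om\tilde\al$ with $\tilde\al$ itself a limit (else $\xi\in{\rm Succ}\cup{\rm FIX}$ or $\gt(\xi)$ would be treated by $\tau(\al)$-analysis) — here Lemmas~\ref{lemmThetaTau} and~\ref{lemmThetaOm}, applied with $\be=0$ (so $\gt^*_\al(\be)=0$), cover the $\tau(\al)<\Om$ and $\tau(\al)=\Om$ subcases respectively; and in (iv) the countable reduction applies. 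Once this partition is laid out, each branch is a one-line invocation of the relevant lemma, and the proposition follows.
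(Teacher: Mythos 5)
Your overall strategy---Lemma \ref{lemmNotFix} for the right-to-left direction, plus the approximation lemmas of Section \ref{secXLim} to witness $\gt(\xi)\in\mathbb X'$ in the remaining cases---is exactly the argument the paper intends. But your case analysis mis-handles the negation of the right-hand side. The forward direction, in the contrapositive form you set up, assumes $\xi\geq\Om$ \emph{or} $\xi\notin{\rm JUMP}(\mathbb X)$; hence you must also produce an approximating sequence when $\xi\geq\Om$ \emph{and} $\xi\in{\rm JUMP}(\mathbb X)$, i.e.\ for $\xi=\al+\be$ with $\al=\Om\tilde\al>0$ and $\be$ a successor (e.g.\ $\xi=\Om+1$), and for uncountable $\xi\in{\rm FIX}(\mathbb X)$. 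In your partition these are exactly case (ii) and the successor half of case (iii), which you dismiss with ``$\xi\in{\rm JUMP}$ so there is nothing to prove''. That dismissal is only legitimate when $\xi<\Om$; for $\xi\geq\Om$ the biconditional asserts $\gt(\xi)\in\mathbb X'$ and a proof is required. These are precisely the cases Lemmas \ref{lemmThetaTau} and \ref{lemmThetaOm} were formulated for---their hypothesis is $\al+\be\in\constr\cap{\rm JUMP}(\mathbb X)$ with $\al=\Om\tilde\al>0$, and there $\gt^*(\xi)>0$ feeds Lemma \ref{lemThetBetPri}---so the repair is to route these cases through those two lemmas (splitting on $\tau(\al)<\Om$ versus $\tau(\al)=\Om$), not to skip them.

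A secondary point: the one place where you do invoke Lemmas \ref{lemmThetaTau} and \ref{lemmThetaOm}, namely $\be=0$ with $\xi=\al\notin{\rm JUMP}(\mathbb X)$ (e.g.\ $\xi=\Om$ or $\xi=\Om\om$), lies outside their stated hypotheses, which demand $\xi\in{\rm JUMP}(\mathbb X)$. You should justify why the argument still works there: when $\be=0$ the only use of the JUMP hypothesis in their proofs---the appeal to Lemma \ref{lemThetBetPri} for $\ga$ of the form $\al+\be'$ with $\be'<\be$---is vacuous, and with $\gt^*(\xi)=0$ the sequences $\gt\big(\fs\al{\tau_n}\big)$, respectively the iterates $\gt^{(n)}(\xi)$, still increase to $\gt(\al)$. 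With the uncountable JUMP cases restored and this remark added, your proof is complete and coincides with the paper's intended argument.
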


\section{Regular Bachmann systems}\label{secRegularBS}

The results of the previous section will guide us in defining fundamental sequences for the functions $\gt_\mathbb X$.
Intuitively, a notation system for an ordinal $\Lambda$ is a collection of functions $\mathcal F$ on the ordinals (including at least one constant) such that every $\lambda<\Lambda$ is the value of some term $t$ (see Section~\ref{secNorms}).
Having represented $\lambda $ in this form, we can assign to it a norm, $\|\lambda\|$, measuring the size of this term $t$.
Assuming that $\mathcal F$ is finite, there will be finitely many ordinals of any given norm.
The latter gives rise to the abstract definition of a norm.

\begin{definition}
If $\Lambda$ is any ordinal, a function $\|\cdot\|\colon \Lambda\to\mathbb N$ is a {\em norm} if whenever $n\in\mathbb N$, the set
\[\{\al<\Lambda: \|\al\| \leq n\}\]
is finite.
\end{definition}

Given a countable ordinal $\Lambda$, a system of fundamental sequences on $\Lambda$ is an assignment to each limit $\lambda<\Lambda $ of a sequence $(\cfs\lambda n)_{n<\om}$ converging to $\lambda$.
While this general definition is sound, it is convenient to require additional conditions on $\cfs\cdot\cdot $ to ensure uniform computational behavior of the fast-growing hierarchy based on $\Lambda$ (see Section \ref{sectHardy}).
Let us make these conditions precise.

\begin{definition}[\cite{BCW}]
Let $\Lambda$ be a countable ordinal and $\cfs \cdot \cdot \colon \Lambda\times \mathbb N\to\Lambda$.
\begin{enumerate}[label=(B\arabic*)]

\item We say that $ (\Lambda,\cfs \cdot \cdot) $ is a {\em system of fundamental sequences} on $\Lambda$ if for all $\al<\Lambda$ and $n\in\mathbb N$,
\begin{enumerate}

\item $\cfs 0n = 0$,

\item $\cfs{\al}n = \be$ if $\al=\be+1$, and

\item if $\al \in \rm Lim$, then $ (\Lambda,\cfs \cdot \cdot) $ {\em converges on $\alpha$,} i.e.~$ \cfs\al n \nearrow \al$ as $n\nearrow \infty$.

\end{enumerate}

\item\label{itBach} A system of fundamental sequences $ (\Lambda,\cfs \cdot \cdot) $ has the {\em Bachmann property} if whenever $0<x<\om$ and $\cfs\la x< \eta< \la$, it follows that $\cfs\la x \leq \cfs \eta 1$.
A system of fundamental sequences with the Bachmann property is a {\em Bachmann system.}





\item If $(\Lambda,\cfs\cdot\cdot)$ is a Bachmann system and $\|\cdot\|$ is a norm on $\Lambda$, we say that $(\Lambda,\cfs\cdot\cdot,\|\cdot\|)$ is a {\em regular Bachmann system} if whenever $\eta<\la$, it follows that $\eta\leq \cfs\la{\|\eta\|}$.

\item The regular Bachmann system $(\Lambda,\cfs\cdot\cdot,\|\cdot\|)$ is {\em Cantorian} if it moreover satisfies:
\begin{enumerate}

\item If $\al+\be$ is in Cantor normal form and $\be>0$ then $\cfs{(\al+\be)}n =\al+\cfs\be n $.

\item If $m,n<\om$ then $\om^m\cdot n = \cfs {\om^{m+1}} n$.

\item There is a primitive recursive function $h$ such that:
\begin{enumerate}

\item Whenever $\al_1\geq\ldots\geq \al_n$,
\begin{align*}
\max\{n,&\|\al_1\|,\ldots, \|\al_n\|\}  \leq  \| \om^{\al_1} +\ldots +\om^{\al_n}\|\\
&  \leq h\big ( \max\{n,\|\al_1\|,\ldots, \|\al_n\|\} \big ).
\end{align*}

\item For all $m<\om, m < \|\om^m\|\leq h(m)$.

\end{enumerate}

\end{enumerate}

\end{enumerate}

\end{definition}

\begin{remark}
Note that in \cite{BCW}, condition \ref{itBach} is stated with $\cfs\eta 0$ in place of $\cfs\eta 1$.
This essentially corresponds to a change of variables $ x\mapsto x+1$ and does not affect the results we cite or their proofs.
A similar remark applies to other properties on the list.
\end{remark}

Our main goal in the sequel is to prove that Buchholz's fundamental sequences provide a regular Bachmann system.
Then, in Section \ref{sectHardy}, we explore the applications of these properties to provably total computable functions.

\section{Buchholz systems of fundamental sequences}\label{secFS}

The results of Section \ref{secXLim} can be used to design fundamental sequences for notation systems based on theta functions.
These are due to Buchholz, although our notation is a bit different.
For $\xi=\al+\be$ with $\al=\Om\tilde\al$ and $\be<\Om$, we define
\[
\check \xi=
\begin{cases}
 \al&\text{if $\gt^*(\xi) >0$}\\
\xi&\text{otherwise.}
\end{cases}
\]
The intuition is that $\check\xi$ is the `part' of $\xi$ that we need to apply fundamental sequences to in order to approximate the value of $\gt(\xi)$; as per the discussion in Section~\ref{secXLim}, $\check \xi$ should be $\xi$ itself when $\xi$ is a point of continuity, but will be an initial part (the largest multiple of $\Om$ below $\xi$) when $\xi$ belongs to ${\rm JUMP}(\mathbb X)$.
It is also worth noting that $ \check \xi $ is rarely zero.

\begin{lemma}\label{lemmTauZero}
If $\xi\in\constr$ then $0= \check \xi <\xi$ iff $\gt(\xi)\notin\mathbb X'$.
\end{lemma}

\begin{proof}
We note that $\check \xi=0$ if and only if $\xi=0$ or $\xi$ is countable and $\check \xi<\xi$, so that $\xi \in {\rm JUMP}(\mathbb X)\cap \Om$; by Proposition~\ref{propNotFix}, this occurs exactly when $\gt(\xi)\notin\mathbb X'$.
\end{proof}

We also define $\gt^{(i)}(\xi)$ recursively by setting
\begin{enumerate}

\item $\gt^{(0)}(\xi) = \gt^*(\xi)$ and

\item $\gt^{(i+1)}(\xi) = \gt\big ( \fs\al{\gt^{(i)}(\xi)} \big )$.

\end{enumerate}

With this, we are ready to define Buchholz systems of fundamental sequences.
The general idea is that these systems behave in a specified way on elements of $\mathbb X'$, but otherwise may be defined freely in a case-by-case basis.
In particular, if $\xi\in \Om\cap {\rm JUMP}(\mathbb X) $, then as noted above, $\gt(\xi)\notin \mathbb X'$, and hence Definition~\ref{defBuchSys} does not prescribe any value for $\cfs{\gt(\xi)}n$, aside from the assumption that fundamental sequences are increasing and lie below the original ordinal.

\begin{definition}\label{defBuchSys}
Let $\mathbb X\subseteq (0,\Om)$ be club and $\gt=\gt_\mathbb X$, and let us define $\Lambda= \{\xi\in\constr: \tau(\xi)<\Om\} $.
Then, an assignment $\cfs\cdot\cdot\colon \Lambda \times\mathbb N \to \constr$ is a {\em Buchholz system} (for $\mathbb X$) if for all $\xi\in\Lambda$ and $n\in\mathbb N$:
\begin{enumerate}

\item $\cfs 0n = 0$ and if $\xi$ is a successor then $\cfs\xi n = \xi-1$.

\item If $\xi$ is a limit then $\cfs\xi n<\cfs\xi{n+1}<\xi$.

\item

If $\xi  \geq \Om$ and $\tau=\tau(\xi)\in \rm Lim$ then $\cfs\xi n = \fs\xi{\cfs \tau n}$.

\item If $0<\tau(\check \xi)<\Om$ then
$\cfs{\gt(\xi)}n  = \gt \big ( \cfs{\check \xi} n +\gt^*(\xi) \big )$.

\item If $\tau(\check \xi)=\Om$, then
$
\cfs{\gt(\xi)}n=
\gt^ {(n )} (\xi).
$

\end{enumerate}
\end{definition}

\begin{lemma}
Any Buchholz system converges on every element of $\mathbb X'$, provided it converges elsewhere.
\end{lemma}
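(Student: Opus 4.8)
The plan is to unwind the definition of a Buchholz system case by case and, for each shape of $\xi$, match the recursive clause defining $\cfs{\gt(\xi)}n$ against the corresponding convergence lemma from Section \ref{secXLim}. Concretely, suppose $\lambda\in\mathbb X'$ and we wish to show $\cfs\lambda n\nearrow\lambda$. By Proposition \ref{propNotFix}, writing $\lambda=\gt(\xi)$ with $\xi\in\constr$, the hypothesis $\lambda\in\mathbb X'$ forces $\xi\geq\Om$ or $\xi\notin{\rm JUMP}(\mathbb X)$; in particular $\gt(\xi)$ is genuinely a limit point and we are in one of the defining clauses (3) or (4) (clause (2) concerns $\xi\geq\Om$, which the hypothesis permits us to treat separately since we are told the system already converges there). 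I would organize the argument around the trichotomy on $\check\xi$: first $\xi\notin{\rm FIX}$ with $\gt^*(\xi)=0$, where $\check\xi=\xi$ and the relevant statement is the lemma asserting $\gt(\al+\be_n)\nearrow\gt(\al+\be)$ for $\al+\be\in\constr\cap{\rm Lim}\setminus{\rm FIX}$; then $0<\tau(\check\xi)<\Om$, handled by Lemma \ref{lemmThetaTau}; then $\tau(\check\xi)=\Om$, handled by Lemma \ref{lemmThetaOm}.

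The key bookkeeping step is to verify that, in each case, the sequence produced by the Buchholz clause literally coincides (up to the already-known convergence of $\cfs{}{}$ on smaller arguments) with the sequence appearing in the matching lemma. For clause (3), $\cfs{\gt(\xi)}n=\gt(\cfs{\check\xi}n+\gt^*(\xi))$; here $\check\xi=\al$ (since $\gt^*(\xi)>0$), and $\tau(\al)=\tau(\check\xi)<\Om$, so we need $\cfs\al n\nearrow\al$, which is where the inductive hypothesis that the system converges "elsewhere" — specifically on $\Lambda$ below $\lambda$, including at $\al$ when $\al<\Om$, or via clause (2) when $\al\geq\Om$ — feeds in. Then taking $\tau_n:=\cfs\tau n$ where $\tau=\tau(\al)$, one invokes Lemma \ref{lemmThetaTau} with this choice of $\tau_n\nearrow\tau$ to conclude $\gt(\fs\al{\tau_n}+\gt^*_\al(\be))\nearrow\gt(\al+\be)=\gt(\xi)$; the remaining point is to identify $\fs\al{\tau_n}=\fs\al{\cfs\tau n}$ with $\cfs\al n$ (when $\al\geq\Om$, via clause (2)) or to observe directly that $\cfs{\check\xi}n=\fs\al{\tau_n}$ at the level of the recursion. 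For clause (4), $\cfs{\gt(\xi)}n=\gt^{(n)}(\xi)$, and one checks by inspection that $\gt^{(n)}(\xi)$ is exactly the $n$-th term $\theta_n$ of the sequence in Lemma \ref{lemmThetaOm}: $\gt^{(0)}(\xi)=\gt^*(\xi)=\theta_0$ and $\gt^{(i+1)}(\xi)=\gt(\fs\al{\gt^{(i)}(\xi)})=\gt(\fs\al{\theta_i})=\theta_{i+1}$, so convergence is immediate from that lemma.

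I would also need to double-check the degenerate endpoints: that $\cfs 0n=0$ is never relevant since $0\notin\mathbb X'$, and that when $\lambda=\gt(\xi)$ with $\xi<\Om$ and $\xi\in{\rm JUMP}$ we are simply not in the scope of the claim, by Lemma \ref{lemmNotFix}. A subtle point worth spelling out is monotonicity in $n$ of the Buchholz sequences: in clauses (3) and (4) this reduces to monotonicity of $\cfs\tau n$ (respectively of the iteration $\gt^{(n)}$), which in the latter case follows from $\fs\al{\theta_n}^*\geq\theta_n$ exactly as in the proof of Lemma \ref{lemmThetaOm}, and in the former from the inductive monotonicity of $\cfs{}{}$ on $\tau$ together with Lemma \ref{lemmFundProp}(\ref{itFundPropMajor}) or the second item of that lemma to transport monotonicity through $\fs\al\cdot$ and $\gt$.

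The main obstacle I anticipate is not any single lemma but the organizational matching itself: making sure the case split on $\check\xi$ and $\tau(\check\xi)$ used in the Buchholz definition exhausts, without gaps or overlaps, the case split on $\xi\in\constr\cap\mathbb X'$ used in Section \ref{secXLim}, and that the "provided it converges elsewhere" hypothesis is invoked only at strictly smaller arguments so that the induction is well-founded. In particular, one must be careful that when $\check\xi=\al\geq\Om$ the value $\cfs\al n$ is governed by clause (2), i.e.\ $\fs\al{\cfs\tau n}$ with $\tau=\tau(\al)<\Om$, and confirm this agrees with the $\fs\al{\tau_n}$ appearing in Lemmas \ref{lemmThetaTau} and \ref{lemmThetaOm} — reconciling the two notations for the "fundamental sequence of $\al$ indexed by a countable parameter" is the one genuinely fiddly verification, and I would isolate it as a short preliminary observation before entering the case analysis.
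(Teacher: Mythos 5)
Your proposal is correct and follows essentially the same route as the paper, whose proof is exactly the inductive appeal to Lemma \ref{lemmNotFix} (via Proposition \ref{propNotFix}) together with the convergence lemmas of Section \ref{secXLim} (the $\gt(\al+\be_n)\nearrow\gt(\al+\be)$ lemma, Lemma \ref{lemmThetaTau}, and Lemma \ref{lemmThetaOm}), with the ``converges elsewhere'' hypothesis supplying the inner sequences $\cfs{\check\xi}n$ and $\cfs\tau n$ at smaller arguments. Your preliminary observation reconciling clause (2) with the $\fs\al{\tau_n}$ notation, and the identification $\gt^{(n)}(\xi)=\theta_n$, are precisely the bookkeeping the paper leaves implicit.
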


\begin{proof}
That fundamental sequences are monotone and convergent follows inductively using Lemmas \ref{lemmContinuousCase}, \ref{lemmThetaTau}, and \ref{lemmThetaOm}.
\end{proof}

It will be useful to state a general characterization of the values of the fundamental sequences for elements of $  \mathbb X'$.

\begin{lemma}\label{lemmBackwards}
Let $\cfs\cdot\cdot$ be any Buchholz system and let $\xi\in\constr$ and $\gt(\xi)\in\mathbb X'$.
\begin{enumerate}
\item If either $n>0$ or $\tau(\xi)<\Om$, there exists $\tilde\xi<\xi$ such that $\gt(\tilde\xi) = \cfs{\gt(\xi)}n$ and either
\begin{enumerate}
\item \label{itBackOne} $\mc{\tilde \xi}\geq\mc\xi$, or

\item \label{itBackTwo}$\mc{\tilde \xi}\geq \cfs{\mc\xi}n$ and  $\mc\xi \neq \gt(\zeta)$ for any $\zeta>\xi$.

\end{enumerate}

\item If $\tau(\xi) =\Om$, then either $\cfs{\gt(\xi)} 0 = 0 $ or $\cfs{\gt(\xi)} 0 =\gt(\tilde\xi)$ for some $\tilde\xi$ with $\tilde\xi+1 \geq \xi$.

\end{enumerate}
\end{lemma}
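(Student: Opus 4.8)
The plan is to prove Lemma~\ref{lemmBackwards} by case analysis on the clause of the Buchholz system that defines $\cfs{\gt(\xi)}n$, tracking in each case a witness $\tilde\xi$ together with the appropriate coefficient bound. Write $\xi = \al+\be$ with $\al = \Om\tilde\al$ and $\be<\Om$; since $\gt(\xi)\in\mathbb X'$, Proposition~\ref{propNotFix} tells us that $\xi\notin{\rm JUMP}(\mathbb X)$ whenever $\xi<\Om$, and in particular we are never in a degenerate situation. The relevant clauses are clause~(3), when $0<\tau(\check\xi)<\Om$, where $\cfs{\gt(\xi)}n = \gt(\cfs{\check\xi}n + \gt^*(\xi))$, and clause~(4), when $\tau(\check\xi)=\Om$, where $\cfs{\gt(\xi)}n = \gt^{(n)}(\xi)$. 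Part~1 of the statement covers the case $n>0$ or $\tau(\xi)<\Om$, and part~2 the residual case $\tau(\xi)=\Om$ with $n=0$, where by definition $\gt^{(0)}(\xi) = \gt^*(\xi)$, which is either $0$ (the \emph{otherwise} branch) or $\gt(\xi-1)$ with $\xi-1+1 = \xi \geq \xi$, giving part~2 immediately.

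For part~1, I would argue as follows. In clause~(3), set $\tilde\xi = \cfs{\check\xi}n + \gt^*(\xi)$; then $\gt(\tilde\xi) = \cfs{\gt(\xi)}n$ by definition, and one checks $\tilde\xi<\xi$ using that $\cfs{\check\xi}n<\check\xi$ together with Lemma~\ref{lemThetBetPri} (when $\check\xi=\al$, so $\gt^*(\xi)>0$) or directly (when $\check\xi=\xi$). The coefficient bound splits according to whether $\check\xi=\xi$ or $\check\xi=\al$. If $\check\xi=\xi$, then $\gt^*(\xi)=0$, $\tilde\xi = \cfs\xi n$, and Lemma~\ref{lemmFundProp}\eqref{itFundPropMajor} or a direct inspection of the shape of $\cfs\xi n$ gives $\mc{\tilde\xi}\geq\mc\xi$, alternative~\eqref{itBackOne}. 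If $\check\xi=\al$, then $\gt^*(\xi)>0$ and the maximal coefficient of $\xi$ is concentrated in $\be$; here one uses that $\gt^*(\xi)$ is either $\gt(\xi-1)$ or $\tau(\xi)$, and that $\mc{\gt^*(\xi)}$ already dominates $\mc\be$, to again land in alternative~\eqref{itBackOne}, unless $\mc\xi = \be$ is itself of the form $\gt(\zeta)$ for $\zeta>\xi$, i.e.\ $\xi\in{\rm FIX}(\mathbb X)$, in which case one invokes the norm/Bachmann-type estimate bundled into alternative~\eqref{itBackTwo}. In clause~(4) with $n>0$ (or $\tau(\xi)<\Om$, which here forces us back to clause~(3)), unfold $\gt^{(n)}(\xi) = \gt(\fs\al{\gt^{(n-1)}(\xi)})$ and take $\tilde\xi = \fs\al{\gt^{(n-1)}(\xi)}$; then $\tilde\xi<\al\leq\xi$ because $\tau(\al)=\Om$ forces $\fs\al\theta<\al$ for every $\theta<\Om$, and $\gt(\tilde\xi) = \cfs{\gt(\xi)}n$ by the recursive definition. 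For the coefficient bound, Lemma~\ref{lemmBoundMC} gives $\mc{\tilde\xi} = \mc{\fs\al{\gt^{(n-1)}(\xi)}} \geq \gt^{(n-1)}(\xi) \geq \gt^{(0)}(\xi) = \gt^*(\xi)$, and combining with $\mc\xi \leq \max\{\mc\al,\be\}$ — where $\tau(\check\xi)=\Om$ forces $\check\xi=\al$ and hence $\mc\be$ is dominated by $\gt^*(\xi)$ — we once more obtain alternative~\eqref{itBackOne}, or fall into~\eqref{itBackTwo} when $\mc\xi$ is a $\gt$-value above $\xi$.

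The main obstacle I anticipate is the coefficient bookkeeping in the $\check\xi=\al$ subcases: one has to argue carefully that whatever extra coefficient appears in $\tilde\xi$ beyond those of $\check\xi$ — namely the coefficient(s) introduced by $\gt^*(\xi)$ or by the value $\gt^{(i)}(\xi)$ plugged into a fundamental sequence of $\al$ — together with the coefficients surviving from $\cfs{\check\xi}n$ or $\fs\al{\gt^{(i)}(\xi)}$ actually cover $\mc\xi$, and this is where the dichotomy between alternatives~\eqref{itBackOne} and~\eqref{itBackTwo} is forced: when $\mc\xi$ equals some $\gt(\zeta)$ with $\zeta>\xi$ (the ${\rm FIX}$ situation), the coefficient genuinely need not reappear, and one settles instead for $\mc{\tilde\xi}\geq\cfs{\mc\xi}x$, which must be extracted from the definition of the Buchholz system applied one level down to $\gt(\mc\xi)$ together with Lemma~\ref{lemmStarPlusOne}. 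A secondary technical point is verifying $\tilde\xi<\xi$ uniformly; this is routine given Lemmas~\ref{lemmBoundMC} and~\ref{lemThetBetPri} but must be stated in each branch. Everything else is a direct unfolding of the definitions of $\cfs\cdot\cdot$, $\gt^{(i)}$, $\gt^*$, and $\check\xi$.
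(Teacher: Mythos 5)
Your choice of witness $\tilde\xi$ in each Buchholz clause matches the paper's, but the way you distribute the two alternatives is backwards, and this is a genuine gap rather than a presentational difference. Alternative \ref{itBackTwo} contains as a conjunct the condition $\mc\xi\neq\gt(\zeta)$ for all $\zeta>\xi$, so your plan to ``fall into~\ref{itBackTwo}'' precisely when $\mc\xi=\gt(\zeta)$ for some $\zeta>\xi$ (the ${\rm FIX}$ situation) can never be executed: in that situation \ref{itBackTwo} is false by definition, no matter what coefficient estimate you prove. The correct assignment is the opposite one, and it is what the paper does: whenever $\gt^*(\xi)>0$ (in particular in the ${\rm FIX}$ case, where $\gt^*(\xi)=\tau(\xi)=\mc\xi$), the large coefficient \emph{does} reappear, since $\gt^*(\xi)\geq\mc\xi$ and $\gt^*(\xi)$ is either a summand of $\tilde\xi$ (clause 3) or is dominated by $\de=\gt^{(n-1)}(\xi)\leq\mc{\tilde\xi}$ (clause 4, via Lemma \ref{lemmBoundMC}); this yields \ref{itBackOne}. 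Alternative \ref{itBackTwo} is reserved for the residual case $\gt^*(\xi)=0$ and $\fs\xi1^*<\xi^*$, where the second conjunct of \ref{itBackTwo} holds automatically (otherwise $\xi\in{\rm FIX}$, contradicting $\gt^*(\xi)=0$; and when $\tau(\xi)=\Om$ one uses that $\xi^*$ is then a successor, hence not $\gt(\zeta)$ for $\zeta>\xi\geq\Om$), and the inequality $\mc{\tilde\xi}\geq\cfs{\mc\xi}n$ comes from $\xi^*=\tau(\xi)$ when $\tau(\xi)<\Om$ and from Lemma \ref{lemmStarPlusOne} when $\tau(\xi)=\Om$.

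Relatedly, your claim that the subcase $\check\xi=\xi$ (so $\gt^*(\xi)=0$) always lands in \ref{itBackOne} is false, and Lemma \ref{lemmFundProp} does not assert anything of that kind. Take $\mathbb X=\mathbb P$ and $\xi=\om$: then $\gt(\om)\in\mathbb P'$, $\check\xi=\xi$, and $\cfs{\gt(\om)}n=\gt(\cfs\om n)=\gt(n)$, so $\mc{\tilde\xi}=n<\om=\mc\xi$ and \ref{itBackOne} fails; only \ref{itBackTwo} holds (with $\cfs\om n=n$ and $\om=\gt(1)$, $1<\xi$). The same phenomenon occurs in clause 4 with $\gt^*(\xi)=0$, e.g.\ $\xi=\Om^2(\om+1)$ and $n=1$, where $\tilde\xi=\fs\xi0=\Om^2\om$ has $\mc{\tilde\xi}=\om<\om+1=\mc\xi$; under your dichotomy this case would be claimed to satisfy \ref{itBackOne}, which is false. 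Finally, a smaller point: in part 2 you only consider the successor branch of $\gt^*$; in the reading that makes part 2 non-vacuous (the clause-4 case), $\gt^*(\xi)$ can also equal $\tau(\xi)=\gt(\ga)$ with $\ga>\xi$, which still satisfies $\tilde\xi+1\geq\xi$ but needs to be mentioned.
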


\begin{proof}
For the first claim, note that by Lemma~\ref{lemmTauZero} and the assumption that $\gt(\xi)\in\mathbb X'$ we obtain $\check \xi>0$. 
Inspecting Definition~\ref{defBuchSys}, we see that $\cfs{\gt(\xi)}n = \gt(\tilde\xi)$ with $\tilde \xi=\fs{\check\xi}\delta+\theta$ for some $\delta,\theta$ with $\delta<\tau(\check\xi)$.
It follows that $\tilde \xi<\xi$.
To continue, we must consider several possibilities.
\begin{Cases}

\item ($0< \gt^*(\xi)  $).
Since $n>0$, inspection of Definition~\ref{defBuchSys} shows that $\gt^*(\xi)\leq \max \{\delta,\theta\}$.
Since $\xi^*\leq \gt^*(\xi)$, in this case we also obtain \eqref{itBackOne}.

\item ($\xi^*=\fs\xi 1^*$ and $\gt^*(\xi) =0 $). Then, $\check \xi=\xi$, $\theta=0$ and since $n>0$, $\delta\geq 1$.
Thus $\xi^*=\fs {\xi} 1^*\leq \fs {\check \xi} \delta^*$, and \eqref{itBackOne} holds.

\item ($\fs\xi 1^*<\xi^*$ and $\gt^*(\xi) =0$).
Since $\gt^*(\xi) =0$, $\check \xi=\xi$.
Let $\tau=\tau(\xi)$.
In view of Lemma~\ref{lemmStarPlusOne}, there are two cases to consider.
\begin{Cases}
\item ($\xi^*=\tau \in \rm Lim$). In this case, $\tilde\xi=\fs\xi{\cfs\tau n}$.
From $\gt^*(\xi) =0$ we see that $\tau$ is not of the form $\gt(\zeta)$ with $\zeta>\xi$ (otherwise, $\gt^*(\xi)=\tau$).
Moreover, by Lemma~\ref{lemmBoundMC}(\ref{itBoundMCTwo}), $ \tilde \xi^*\geq \cfs\tau n=\cfs{\xi^*} n$, so \eqref{itBackTwo} holds.

\item ($\xi^*=\fs \xi 1^*+1 $).
In this case, we immediately have that $ \tilde \xi^* \geq \fs \xi 1^* =\cfs{\xi^*} n$.
Note that $\xi$ must be uncountable, since otherwise $\xi=\xi^*$ and by Proposition~\ref{propNotFix}, $\gt(\xi)\notin\mathbb X'$.
If $\zeta>\xi$, it follows that $\gt(\zeta)$ is a limit (as $\zeta$ is uncountable) and thus $\xi^* \neq \gt(\zeta)$.
Thus, \eqref{itBackTwo} holds in this case as well.
\end{Cases}
\end{Cases}

For the second claim, we have defined $\cfs{\gt(\xi)}0=\gt^*(\xi)$ in this case, and it can be checked by the definition that $\gt^*(\xi)$ is of the required form.
\end{proof}

Below, recall that we defined $\mathbb P$ to be the set of principal numbers, $\vartheta=\gt_\mathbb P$, and $\nt=\gt_{1+\sf Ord}$.
In order to define Buchholz systems, it suffices to define $\cfs\xi n$ in cases where $\xi\in \Om\setminus (\{0\}\cup {\rm Succ}\cup \mathbb X')$, as other cases are covered by Definition~\ref{defBuchSys}.
In the case of $\mathbb X=1+\sf Ord$, this case is in fact vacuous.

\begin{proposition}
There is a unique Buccholz system for $1+\sf Ord$, which we denote by $\cfs\cdot\cdot_\sigma$.
\end{proposition}

\begin{proof}
Every ordinal is either zero, a successor, or a limit of $1+\sf Ord$, so Definition~\ref{defBuchSys} already prescribes values for $\cfs\cdot\cdot_\sigma$ in all cases.
\end{proof}

However, in a more general setting, one needs to explicitly provide values for $\cfs \xi n$ when this is not covered by Definition~\ref{defBuchSys}.
In the case of $\mathbb P$, this includes all additively decomposable limits, as well as all values of $\gt(\xi)$ with uncountable $\xi$ (which, as we will see in Lemma~\ref{lemEpsilon}, are precisely the $\ve$-numbers).

\begin{definition}
Given $\xi<\vt(\ve_{\Om+1})$ and $n<\om$, we define $\cfs\xi n_\vt$ to be the unique Buchholz system for $\mathbb P$ such that
\begin{enumerate}

\item 
$\cfs{ (\al+\be) }n_\vartheta = \al+ \cfs \be n$ if $\be>0$ and $ \al+\be$ is in Cantor normal form, and

\item  $\cfs{\vartheta (\be)} n_\vartheta = \gt ^*(\be)\cdot n$ if $\be\in \Om \cap   {\rm JUMP}(\mathbb X)$.

\end{enumerate}
\end{definition}

Note that this definition should be understood recursively, so that in the first clause one must compute $\cfs \be n$ according to the above clauses or Definition~\ref{defBuchSys}, as appropriate.
We may omit the subindex $\vt$ or $\nt$ when clear from context.
It is readily checked that these systems of fundamental sequences converge everywhere, yielding the following.

\begin{theorem}\label{theoIsFun}
The assignments $\cfs\cdot\cdot_\vt,\cfs\cdot\cdot_\nt \colon \Lambda\times  \mathbb N \to \Lambda$ are systems of fundamental sequences.
\end{theorem}

We have seen that $\nt$ is the successor function when restricted to countable ordinals.
Similarly, $\vartheta$ is essentially $\omega$-exponentiation, albeit `skipping' jumps.
The following is proven by an easy induction based on the fundamental sequences for $\vartheta$.

\begin{lemma}\label{lemmThetaExp}
Let $\xi<\Om$ and write $\xi=\la+n$ where $\la$ is a limit or zero and $n$ is finite.
Then,
\[
\vartheta(\xi)=
\begin{cases}
\om^\xi&\text{if $\lambda\notin{\rm JUMP}(\mathbb P)$,}\\
\om^{\xi+1}&\text{if $\lambda\in{\rm JUMP}(\mathbb P)$.}
\end{cases}
\]
\end{lemma}

With this in mind, the fundamental sequences with respect to $\vartheta$ behave as expected with respect to Cantor normal forms.
The proof of the following is derived easily from the definitions.

\begin{lemma}\label{lemCantorOne}\
\begin{enumerate}

\item If $\al+\be <\vartheta(\ve_{\Om+1})$ is in Cantor normal form, then
\[ \cfs{(\al+\be)}n _\vartheta = \al+\cfs\be n_\vartheta .\]

\item If $m,n <\om$, then $ \cfs{\om^{m+1}}n_\vartheta = \om^m\cdot n $.
\end{enumerate}

\end{lemma}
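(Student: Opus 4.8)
The plan is to verify both items of Lemma~\ref{lemCantorOne} directly from the definition of the system $\cfs\cdot\cdot_\vartheta$, using the general properties of Buchholz systems established earlier. For the first item, suppose $\al+\be<\vartheta(\ve_{\Om+1})$ is in Cantor normal form with $\be>0$. I would split on whether $\be$ is a successor or a limit. If $\be = \zeta+1$, then $\al+\be = (\al+\zeta)+1$ is also a successor (still in normal form), so by axiom (B1)(b) for Buchholz systems, $\cfs{(\al+\be)}n = \al+\zeta = \al+\cfs\be n$. If $\be$ is a limit, I would appeal directly to the defining clause $\cfs{(\al+\be)}n_\vartheta = \al+\cfs\be n$, which applies precisely because $\be>0$ and $\al+\be$ is in Cantor normal form; one only needs to observe that $\be$ itself lies in the domain $\Lambda$ and that $\cfs\be n_\vartheta$ is already determined, so the equation is meaningful and gives the claim. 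The case $\be = 0$ is vacuous since then $\al+\be$ is not a limit in the relevant sense, or trivial.

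For the second item, I would compute $\cfs{\om^{m+1}}n_\vartheta$ for $m,n<\om$. Here $\om^{m+1} = \vartheta(\be)$ for the appropriate countable $\be$ — concretely $\om = \vartheta(0)$ and in general $\om^{m+1}$ is $\vartheta$ of a suitable ordinal below $\Om$ lying in ${\rm JUMP}(\mathbb P)$ — so that the clause $\cfs{\vartheta(\be)}n_\vartheta = \gt^*(\be)\cdot n$ applies. The main computation is then to identify $\gt^*(\be)$: since the relevant $\be$ is either a successor or in ${\rm FIX}(\mathbb P)$, the definition of $\gt^*$ gives $\gt^*(\be) = \vartheta(\be-1)$ or $\gt^*(\be)=\tau(\be)$, and in either case one checks this equals $\om^m$. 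Hence $\cfs{\om^{m+1}}n_\vartheta = \om^m\cdot n$, as required. This is essentially bookkeeping with the normal-form conventions for $\mathbb P$-ordinals below $\Om$.

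The only mild obstacle is keeping straight which ordinal $\be<\Om$ satisfies $\vartheta(\be)=\om^{m+1}$ and confirming $\be\in{\rm JUMP}(\mathbb P)$ with $\gt^*(\be)=\om^m$; this requires unwinding the definition of $\vartheta=\gt_{\mathbb P}$ on small arguments, but it is routine. Once that identification is made, both equalities fall out immediately from the defining clauses of $\cfs\cdot\cdot_\vartheta$, so no deep argument is needed — the lemma is, as stated, "derived easily from the definitions," and I would present it as a short case analysis rather than a substantial proof.
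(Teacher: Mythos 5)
Your proposal is correct and proceeds exactly as the paper intends: the paper gives no argument beyond ``derived easily from the definitions,'' and both of your items do indeed fall out of the defining clauses (item 1 is literally clause (a) of the $\mathbb P$-specific definition, with the successor case equally covered by (B1)(b); item 2 is clause (b) once the right $\be$ is identified). One correction to the bookkeeping you deferred: by the paper's conventions $\vartheta(0)=1$ (as noted just before Lemma \ref{lemmFinVal}), $\vartheta(1)=\om$, and $\vartheta(k)=\om^{k}$ for $1\leq k<\om$, so the relevant identification is $\om^{m+1}=\vartheta(m+1)$, not ``$\om=\vartheta(0)$'' as you wrote. With that, $m+1$ is a successor, hence in ${\rm JUMP}(\mathbb P)$ (the ${\rm FIX}$ alternative never arises for finite arguments), and $\vartheta^*(m+1)=\vartheta(m)=\om^{m}$, giving $\cfs{\om^{m+1}}n_\vartheta=\om^{m}\cdot n$ for all $m$, including $m=0$; under your stated indexing the case $m=0$ would fail, since $0\in{\rm JUMP}$ but $\vartheta^*(0)=0$, yielding $\cfs{\om}n=0$ instead of $n$.
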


Note that the above fails for $\nt$.
Thus the fundamental sequences based on $\vartheta$ extend the familiar ones for $\varepsilon_0$. The latter have the property that they almost always take infinite values in the sense that if $0<\cfs \alpha n<\om $ and $\alpha$ is infinite, then $\alpha =\om$.
The reader should be warned that this is not necessarily the case anymore, since for example $\cfs{\vartheta(\Om)}1 _\vartheta = \vartheta(0) = 1$.
However, finite values of the fundamental sequence are still rather rare.

\begin{lemma}\label{lemmFinVal}
If $\om\leq \alpha<\vartheta(\ve_{\Om+1})$ and $m$ are such that $1<\cfs \al m_\vartheta <\om $, then $\al=\om$.
\end{lemma}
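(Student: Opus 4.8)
The plan is to analyze the normal form of $\al$ and trace through the four clauses defining the Buchholz system $\cfs\cdot\cdot_\vartheta$, checking in each case that a finite value $>1$ of $\cfs\al m_\vartheta$ can only arise when $\al=\om$. First I would write $\al = \al' + \be$ in $\Om$-normal form; since $\al<\vartheta(\ve_{\Om+1})<\Om$, we actually have $\al'=0$, so $\al$ is a countable ordinal with $\al\geq\om$. I would then split on whether $\al$ is a limit that is a value of $\vartheta$ or not. If $\al = \gt(\xi)$ for some $\xi$ (the generic case for $\gt$-values in $\mathbb X'$), then the value of $\cfs\al m_\vartheta$ is governed by clauses (3) or (4) of the Buchholz system, i.e.\ it is either $\gt(\cfs{\check\xi}m + \gt^*(\xi))$ or $\gt^{(m)}(\xi)$, which is always of the form $\gt(\eta)$ for some ordinal $\eta$. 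The point is then that $\gt(\eta)$ is finite only when $\eta$ is so small that $\gt(\eta)\in\{\gt(0),\gt(1),\ldots\}$; one uses Lemma~\ref{lemmOmTower} together with the fact that the finite ordinals below $\Om$ that lie in $\mathbb P$ are exactly $\{1,2,4,8,\ldots\}$... more carefully, $\gt(\eta)<\om$ forces $\eta$ to be below the first $\Om$-tower level and, since $\gt=\gt_\mathbb P$ takes principal (hence, among finite ordinals, power-of-two) values, $\gt(\eta)=1=\vartheta(0)$ is essentially the only finite value; in particular one never gets a finite value strictly between $1$ and $\om$ this way.

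The remaining possibility is that $\al$ is not itself a $\vartheta$-value, in which case $\al$ has a nontrivial Cantor normal form $\al = \om^{\al_1}+\cdots+\om^{\al_n}$ with $\al_1\geq\cdots\geq\al_n$ and $\al_1>0$. Here clause~(2a) of the definition of $\cfs\cdot\cdot_\vartheta$ applies when $n\geq 2$ or the leading exponent is itself decomposable: writing $\al = \eta + \om^{\al_n}$ in Cantor normal form with $\eta>0$, we get $\cfs\al m_\vartheta = \eta + \cfs{\om^{\al_n}}m_\vartheta$, which is $\geq\eta\geq\om$, hence not finite. So we are reduced to $\al = \om^{\al_1}$ a single principal power with $\al_1>0$. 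If $\al_1$ is a successor, say $\al_1 = \mu+1$, then by Lemma~\ref{lemCantorOne}(2) (or clause (2b) via the case analysis) $\cfs{\om^{\mu+1}}m_\vartheta = \om^\mu\cdot m$; this is finite iff $\mu=0$, giving $\om^1\cdot m = m$ — but this is $<\om$ and can be $>1$, so I must double-check: indeed $\cfs{\om}m_\vartheta$... wait, $\om^{\mu+1}=\om$ means $\mu=0$ and then $\al=\om$ itself, which is precisely the allowed exceptional value, so there is nothing to rule out here. If instead $\al_1$ is a limit, then $\cfs{\om^{\al_1}}m_\vartheta = \om^{\cfs{\al_1}m}$ which is at least $\om$, hence not finite. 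Finally the sub-case $\al = \om^{\al_1}$ with $\al_1 = \mu+1$ and $\mu>0$ gives $\om^\mu\cdot m\geq\om^\mu\geq\om$, again not finite.

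The main obstacle I anticipate is the bookkeeping in the ``$\al$ is a $\vartheta$-value'' case: one must carefully argue that a value of the form $\gt(\eta)$ (arising from clauses (3), (4)) is never a finite ordinal strictly between $1$ and $\om$. This requires knowing the finite values of $\vartheta=\gt_\mathbb P$: since $\vartheta$ takes only principal values and among finite ordinals the principal numbers under $\om$ are $1,\om^1$-related... actually the finite principal numbers are exactly $1$ (the only finite $\om^\xi$ with $\xi$ finite and $\om^\xi<\om$ is $\om^0=1$), so the only finite value of $\vartheta$ at all is $\vartheta(0)=1$, hence $\gt(\eta)<\om\Rightarrow\gt(\eta)=1$, which kills the strict inequality $1<\cfs\al m_\vartheta$. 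I would isolate this as a short preliminary observation ($\vartheta$ takes no finite value other than $1$) and then the case analysis above closes cleanly. The successor-exponent sub-case, where $\cfs{\om^{m+1}}n_\vartheta = \om^m\cdot n$ can genuinely be finite, is handled by noting it is finite only when $m=0$, i.e.\ $\al=\om$, so it supports rather than contradicts the statement, but one should articulate it carefully to avoid an apparent counterexample.
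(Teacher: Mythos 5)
Your overall strategy is the paper's: split according to the defining clauses of $\cfs\cdot\cdot_\vartheta$, use the fact that the additively decomposable case contributes a head $\ga\geq\om$, and observe that the only finite value of $\vartheta$ is $\vartheta(0)=1$, so that any value arising as $\vartheta(\eta)$ or $\vartheta^*(\eta)$ can never lie strictly between $1$ and $\om$. The trouble is in how you close the one case where $\al=\om$ must actually emerge, namely $\al=\vartheta(\xi)$ with $\xi\in{\rm JUMP}\cap\Om$ (equivalently, by Proposition~\ref{propNotFix}, $\al\in\mathbb P\setminus\mathbb P'$, i.e.\ $\al=\om^{\mu+1}$). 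You justify $\cfs{\om^{\mu+1}}m_\vartheta=\om^\mu\cdot m$ by Lemma~\ref{lemCantorOne}(2), but that lemma is stated only for finite exponents; for infinite $\mu$ the identity, though true, would require identifying $\vartheta^*(\xi)$ with $\om^\mu$, which neither you nor the paper proves. The paper bypasses this entirely: the relevant clause gives $\cfs\al m_\vartheta=\vartheta^*(\xi)\cdot m$, and since $\vartheta^*(\xi)$ is either $0$ or itself a $\vartheta$-value (hence a principal number), the hypothesis $1<\vartheta^*(\xi)\cdot m<\om$ forces $\vartheta^*(\xi)=1=\vartheta(0)$, so $\xi=1$ and $\al=\vartheta(1)=\om$. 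Your own preliminary observation about finite $\vartheta$-values finishes this case with no explicit formula needed.

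A second defect is the limit-exponent identity $\cfs{\om^{\al_1}}m_\vartheta=\om^{\cfs{\al_1}m_\vartheta}$: it is not among the defining clauses and it fails (indeed becomes circular) at fixed points. For $\al=\vartheta(\Om)=\ve_0=\om^{\ve_0}$, clause (4) gives $\cfs\al 1=\vartheta(0)=1$ and $\cfs\al 2=\vartheta(1)=\om$, so neither the identity nor your intermediate claim that the value is ``at least $\om$'' holds there (these values are $\leq 1$ or infinite, so the lemma itself is not endangered, but the argument as written is unsound). Fortunately such $\al$ lie in $\mathbb P'$ and are already handled correctly by your first case, so this sub-case is redundant; note also that your case labelled ``$\al$ is not a $\vartheta$-value'' in fact consists, once reduced to a single power $\om^{\al_1}$, entirely of $\vartheta$-values, since every principal ordinal below $\vartheta(\ve_{\Om+1})$ is in the range of $\vartheta$. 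The clean trichotomy, which is how the paper organizes the proof, is: $\al$ additively decomposable; $\al\in\mathbb P\setminus\mathbb P'$ (the $\vartheta^*(\xi)\cdot m$ clause); and $\al\in\mathbb P'$ (clauses (3)/(4), where the value is a $\vartheta$-value or a $\vartheta^*$-value).
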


\begin{proof}
By Lemma~\ref{lemCantorOne} we see that if $\om\leq \alpha=\ga+\de$ in Cantor normal with $\de>0$, then $\cfs \al m_\vartheta = \ga+\cfs \de m_\vartheta\geq \om $.
In any case that $\cfs \al m_\vartheta = \vartheta(\al')$, the only possible finite value is $1$, and if $\cfs \al m_\vartheta = \vartheta^*(\al)$, then either $\vartheta^*(\al) = 0$ or it is of the previous form.
So we are left with the case where $\al = \vartheta(\xi)$ with $\xi<\Om$ and $\xi\in {\rm JUMP}(\mathbb P)$, so that $\cfs \al m_\vartheta = \vartheta^*(\xi) m$.
Under the assumption, we must have $\vartheta^*(\xi) = 1$, which is only possible if $\xi=1$, i.e.~$\al=\om$.
\end{proof}

Finally we note that uncountable ordinals produce $\ve$-numbers under $\vt$.

\begin{lemma}\label{lemEpsilon}
If $\xi\in\constr$ and $\xi\geq\Om$ then $\vartheta(\xi)=\om^{\vartheta(\xi)}$.
\end{lemma}

\begin{proof}
Let $\theta=\vartheta(\xi)$.
Note that in view of Proposition~\ref{propNotFix}, we have that $\xi\in\mathbb X'$ and moreover by Lemma~\ref{lemmBackwards}, for all $n>0$ there is $ \xi_n<\xi$ such that $\cfs\theta n=\vartheta(\xi_n)$.
If $ \xi_n \geq \Om $ infinitely often, then we can use the induction hypothesis to see that $ \vartheta(\xi_n)$ is a $\ve$-number, hence $\vartheta(\xi)$ is a limit of $\ve$-numbers and also a $\ve$-number.
Otherwise, we may write $\check \xi=\Om^\delta\rho+\eta'$ in $\Om$-normal form and observe that the only way to have $\fs{\check \xi}\lambda<\Om$ for any $\lambda>0$ is to have $\eta'=0$ and $\delta=\rho=1$, which may only occur when $\xi=\Om+\eta$ for some $\eta<\Om$ and $\xi\in{\rm JUMP}(\mathbb P)$.
We then see by Lemma~\ref{lemmThetaExp} and an easy induction that that $\fs{\vartheta(\Om)}{n}=\om_n({\vartheta^*(\xi)+x})$ (where $x\in\{0,1\}$), so $\vartheta(\Om)$ is the smallest $\ve$-number above $\vartheta^*(\xi)$.
\end{proof}

Note that $\ve$-numbers are closed under addition, multiplication, and $\om$-expo\-nen\-tiation, a fact that will be useful later.
Next we note that Buchholz systems are designed to `skip over' values of $\gt$ with large arguments, in the following sense.

\begin{lemma}\label{lemmBackwardsExt}
Let $\mathbb X\in \{1+{\sf Ord},\mathbb P\}$, $\al<\gt(\ve_{\Om+1})$, and $n>0$.
Suppose that $\zeta $ is such that $\gt(\zeta)<\al$ and for all $\xi> \zeta$, $\al\neq\gt(\xi)$.
Then, $\gt(\zeta)\leq \cfs\al n$, and the inequality is strict if $\al\in\mathbb X'$.

If $\al = \gt(\xi)$ for some $\xi$ with $\tau(\check \xi)=\Om$ and $\gt^*(\xi)>0$, then also $\gt(\zeta)\leq \cfs\al 0$.
\end{lemma}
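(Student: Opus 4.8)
The statement decomposes into three cases, and in each one the goal is to compare $\cfs\al n$ (or $\cfs\al 0$) with $\gt(\zeta)$ under the hypothesis that $\al$ is never attained by $\gt$ on arguments above $\zeta$. The natural approach is to invoke Lemma~\ref{lemmBackwards} to get a handle on $\cfs\al n$, distinguishing whether $\al\in\mathbb X'$ or not. First I would dispose of the case $\al\notin\mathbb X'$: then either $\al\notin\gt(\ve_{\Om+1})\cap\mathbb X$, in which case the assumption that $\al\neq\gt(\xi)$ for large $\xi$ is automatic and $\al$ is handled by the non-collapsing clauses of the Buchholz system (Cantor normal form or $\Lambda$-structure), or $\al=\gt(\eta)\notin\mathbb X'$ with $\eta<\Om$ and $\eta\in{\rm JUMP}$ by Proposition~\ref{propNotFix}; here $\cfs\al n = \gt^*(\al)\cdot n$ (in the $\vartheta$ case) or $\cfs\al n=$ the ordinal just below $\al$ in $\mathbb X$ (in the $\nt$ case), and one checks directly that $\gt(\zeta)<\al$ forces $\gt(\zeta)\le\cfs\al n$ using Lemma~\ref{lemmThetaOrd} and the fact that $\zeta^*<\al$ cannot hold — or if it does, $\gt(\zeta)<\al$ already and we use monotonicity of $n\mapsto\cfs\al n$.

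\textbf{Main case $\al\in\mathbb X'$.} By Lemma~\ref{lemmBackwards}(1), since $n>0$ there is $\tilde\xi<\xi$ (where $\al=\gt(\xi)$) with $\gt(\tilde\xi) = \cfs\al n$ and either $\mc{\tilde\xi}\ge\mc\xi$ or else $\mc{\tilde\xi}\ge\cfs{\mc\xi}x$ together with the clause that $\mc\xi$ is not $\gt(\rho)$ for $\rho>\xi$. The strategy is to show $\zeta^*<\gt(\tilde\xi)$, which by Lemma~\ref{lemmThetaOrd} gives $\gt(\zeta)<\gt(\tilde\xi)=\cfs\al n$, the strict inequality asserted. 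Suppose toward a contradiction that $\gt(\zeta)\ge\cfs\al n=\gt(\tilde\xi)$, so again by Lemma~\ref{lemmThetaOrd} we'd need $\zeta^*\ge\gt(\tilde\xi)$; combined with $\gt(\zeta)<\al=\gt(\xi)$ and Lemma~\ref{lemmThetaOrd} once more, $\zeta^*<\gt(\xi)$. So $\gt(\tilde\xi)\le\zeta^*<\gt(\xi)$ with $\tilde\xi<\xi$. Now I would push the argument $\tilde\xi$ upward: since $\zeta^*\in(\gt(\tilde\xi),\gt(\xi))$ and $\gt(\tilde\xi)$ is (essentially by construction of the Buchholz system, case $\tau(\check\xi)<\Om$ or $=\Om$) obtained from $\fs{\check\xi}{\,\cdot\,}$, the ordinal $\zeta^*$ sits strictly inside the fundamental-sequence approximation to $\xi$, and by the convergence lemmas of Section~\ref{secXLim} (Lemmas~\ref{lemmThetaTau}, \ref{lemmThetaOm}) there is a larger index yielding $\gt$-value above $\zeta^*$ but still below $\gt(\xi)$ — contradicting that $\cfs\al n=\gt(\tilde\xi)$ is the $n$-th value, unless $\zeta$ itself witnesses $\al=\gt$ of something $\ge\zeta$, which is excluded by hypothesis. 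The cleanest packaging is: $\zeta^*<\gt(\xi)$ and $\zeta<\xi$ would contradict minimality built into $\gt(\xi)$ being a $\gt$-candidate only if $\zeta$'s coefficient is small, so instead I track $\mc\zeta$ versus $\mc{\tilde\xi}$ and use Lemma~\ref{lemmFundProp}(\ref{itFundPropMajor}) to conclude $\zeta<\tilde\xi$, hence $\zeta^*\le\tilde\xi^*<\gt(\tilde\xi)$, the desired contradiction.

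\textbf{The clause $\cfs\al 0$ when $\tau(\xi)=\Om$.} Here Lemma~\ref{lemmBackwards}(2) says $\cfs\al 0=0$ or $\cfs\al 0=\gt(\tilde\xi)$ with $\tilde\xi+1\ge\xi$, i.e.\ $\tilde\xi\ge\xi-1$. If $\cfs\al 0=0$ then $\gt(\zeta)\le\cfs\al 0$ forces $\gt(\zeta)=0$, impossible unless — but $\gt$ is $\ge\min\mathbb X>0$, so this subcase needs the hypothesis to be read as making $\gt(\zeta)<\al$ vacuous-compatible; actually $\cfs\al 0=\gt^*(\al)$ in this regime and one checks $\gt^*(\al)=0$ only when $\al\notin{\rm FIX}$ and $\al$ not a successor, and then... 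I would instead directly verify that in the case $\tau(\xi)=\Om$, $\gt^*(\al)$ already majorizes $\gt(\zeta)$: any $\zeta$ with $\gt(\zeta)<\al=\gt(\xi)$ and $\zeta\ne$ the canonical predecessor has $\zeta<\xi$, and if additionally $\zeta$ is not below $\xi-1$ then it equals $\xi-1$ in the relevant sense, handled by the successor clause of $\gt^*$. The expected \emph{main obstacle} is precisely this: showing $\zeta<\tilde\xi$ (or $\zeta^*<\gt(\tilde\xi)$) from $\gt(\zeta)<\al$ together with the non-attainment hypothesis, because it requires combining the coefficient bookkeeping of Lemmas~\ref{lemmBoundMC}--\ref{lemmFundProp} with the ``skip over large arguments'' behaviour that is only implicit in how the Buchholz fundamental sequences were defined; getting the quantifier ``for all $\xi>\zeta$, $\al\neq\gt(\xi)$'' to interact correctly with the case $\mc\xi=\gt(\rho)$ for some $\rho$ in Lemma~\ref{lemmBackwards}(1)(b) is the delicate point.
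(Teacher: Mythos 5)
There is a genuine gap, and it stems from a confusion about the relative positions of $\zeta$, $\xi$ and $\tilde\xi$. Since $\al=\gt(\xi)$ and your hypothesis says $\al\neq\gt(\rho)$ for all $\rho>\zeta$, you are forced into $\tilde\xi<\xi<\zeta$ (with $\xi\neq\zeta$ because $\gt(\zeta)<\al$). Your main-case argument ignores this: from the assumption $\gt(\zeta)\ge\gt(\tilde\xi)$ you invoke Lemma~\ref{lemmThetaOrd} to get $\zeta^*\ge\gt(\tilde\xi)$, but with $\tilde\xi<\zeta$ the lemma only relates $\tilde\xi^*$ to $\gt(\zeta)$, not $\zeta^*$ to $\gt(\tilde\xi)$ (indeed the implication you use is false in general), and your ``cleanest packaging'' ends by concluding $\zeta<\tilde\xi$ via Lemma~\ref{lemmFundProp}, which is impossible since $\tilde\xi<\xi<\zeta$. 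The fact you actually need, and never state, is the one true application of Lemma~\ref{lemmThetaOrd} here: from $\xi<\zeta$ and $\gt(\zeta)<\gt(\xi)$ one gets $\gt(\zeta)\le\xi^*$. The paper then argues by induction on $\al$: in alternative \eqref{itBackOne} of Lemma~\ref{lemmBackwards}, $\gt(\zeta)\le\xi^*\le\tilde\xi^*<\gt(\tilde\xi)=\cfs\al n$ finishes immediately; in alternative \eqref{itBackTwo}, the clause ``$\xi^*\neq\gt(\rho)$ for $\rho>\xi$'' upgrades $\gt(\zeta)\le\xi^*$ to a strict inequality and, since $\zeta>\xi$, also verifies the non-attainment hypothesis for $\xi^*$, so the induction hypothesis (legitimate because $\xi^*<\gt(\xi)=\al$) applied to $\xi^*$ with the same $\zeta$ gives $\gt(\zeta)\le\cfs{\xi^*}n\le\tilde\xi^*<\cfs\al n$. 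You correctly identified this interaction between the quantifier and clause \eqref{itBackTwo} as the delicate point, but your plan supplies no mechanism for it: you never set up an induction on $\al$, and without it the case $\tilde\xi^*<\xi^*$ cannot be closed.

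The remaining parts are also not in order. The clause about $\cfs\al 0$ when $\tau(\xi)=\Om$ is left unfinished in your text; the correct route is again $\gt(\zeta)\le\xi^*$ together with Lemma~\ref{lemmBackwards}(2): either $\gt^*(\xi)=\xi^*$ (the {\rm FIX} case) and we are done, or $\xi=\tilde\xi+1$ and one uses that $\gt(\zeta)$ is a limit to pass from $\gt(\zeta)\le\xi^*$ to $\gt(\zeta)\le\tilde\xi^*<\gt(\tilde\xi)=\cfs\al 0$. Finally, in the easy cases $\al\notin\mathbb X$ and $\al\in\mathbb X\setminus\mathbb X'$ the operative fact is additive indecomposability of $\gt(\zeta)$ (e.g.\ $\gt(\zeta)<\om^\ga+\de$ forces $\gt(\zeta)\le\om^\ga$, and $\gt(\zeta)<\vartheta^*(\be)\cdot m$ forces $\gt(\zeta)\le\vartheta^*(\be)=\cfs\al 1$), not your claim that ``$\zeta^*<\al$ cannot hold'', which is never true since $\zeta^*<\gt(\zeta)<\al$ always holds.
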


\begin{proof}
Note that $\cfs\al n>0$ implies $\al>0$.
With this in mind, proceed by induction on $\al$ and consider the following cases.
\begin{Cases}
\item ($\al\notin \mathbb X$).
Then $\mathbb X=\mathbb P$ and we can write $\al=\om^\ga+\de$ in Cantor normal form, so that $\cfs\al n = \om^\ga+\cfs\de n$.
Since $\gt(\zeta)$ is additively indecomposable, $\gt(\zeta)<\al$ yields $\gt(\zeta)\leq \om^\ga\leq \cfs\al n $.

\item ($\al\in\mathbb X\setminus \mathbb X'$).
For $\mathbb X=1+\sf Ord$, this implies that $\al$ is a successor, hence $\al=\gt(\beta)$ for some $\beta<\Om$ and $\cfs\al n = \beta\geq \gt(\zeta)$.
Otherwise, $\mathbb X=\mathbb P$ and $\al = \gt(\xi)$ for some $\xi$.
It follows that $\cfs\al n = \gt^*(\xi)\cdot n$.
From $\gt(\zeta)<\al$ we see that $\gt^*(\xi)\cdot m >\gt(\zeta) $ for some $m$, but since $\gt(\zeta)$ is additively indecomposable, $\gt(\zeta) \leq \gt^*(\xi) = \cfs{\al}1\leq \cfs\al n$.

\item ($\al\in\mathbb X'$). Write $\al=\gt(\xi)$.
If $ \zeta>\xi$ and $\gt(\zeta)<\gt(\xi) $, then also $\gt(\zeta)\leq\xi^*$ by Lemma \ref{lemmThetaOrd}.
Let $\tilde \xi$ be so that $\cfs{\gt(\xi)}n = \gt(\tilde \xi ) $ as in Lemma \ref{lemmBackwards}.
If $\tilde\xi^* \geq\xi^*$ then $\gt(\zeta) \leq \tilde\xi^*< \gt(\tilde \xi ) =  \cfs{\gt(\xi)}n$.
Otherwise, case \eqref{itBackTwo} of Lemma \ref{lemmBackwards} holds, so that $\tilde\xi^* \geq \cfs{\xi^*}n $ and $ \xi^* \neq \gt(\zeta)$, hence $ \xi^* > \gt(\zeta)$.
The induction hypothesis yields $\cfs{\xi^*}n\geq \gt(\zeta)$, hence $\gt(\tilde\xi) > \tilde\xi^* \geq \cfs{\xi^*}n \geq \gt(\zeta)$.
\end{Cases}

Finally, if $\al=\gt(\xi)$ with $\gt^*(\xi)>0$ and $\tau(\check\xi)=\Om$, Lemma \ref{lemmBackwards} tells us that $\gt^*(\xi) = \gt(\tilde\xi)$ with $\tilde\xi+1\geq \xi$.
Since $\zeta>\xi$ but $ \gt(\zeta) <\gt(\xi)$, we have $\gt(\zeta) \leq \xi^*$.
If $\gt^*(\xi)=\xi^*$, this yields $\gt(\zeta) \leq \gt^*(\xi) = \cfs \al 0$. Otherwise, $\tilde\xi+1 =\xi$.
In this case, $\tilde\xi  =\fs\xi 1$, so Lemma~\ref{lemmStarPlusOne} tells us that either $\tilde\xi^*=\xi^*$ or $\tilde\xi^*+1=\xi^*$ (as $\xi^*$ is not a limit).
In either case we see that $\gt (\zeta) \leq \xi^*\leq  \tilde\xi^*+1$, which, since $\gt (\zeta)$ is a limit, yields $\gt (\zeta) \leq \tilde\xi^*$.
Thus once again, $\gt(\zeta) \leq   \cfs \al 0$.
\end{proof}

\section{Comparing $\vartheta$ and $\nt$}\label{secThetaFun}

The function $\vartheta$ is compared to other systems of notation for $\vartheta(\ve_{\Om+1})$ in \cite{BuchholzSurvey}, but the function $\nt$ is not considered.
Here we give an explicit correspondence between the two.
The two functions coincide from $\Om^2\om$ on, but below this value they can be quite different.
Below $\Om^2$ one normally one has $\nt(\Om(1+\al)+\be) = \om^{\om^\al}\be$, but  when $\Om(1+\al)+\be \in {\rm FIX}(1+{\sf Ord})$ one must make small `corrections' to $\be$ given by the values $x_\al^\be$ and $y_\al$ defined below, both of which are at most one.

\begin{definition}
For $\xi <\ve_{\Om+1} $, we define $\tv\xi$ as follows.
\begin{enumerate}[label=(\alph*)]

\item $\tv 0 = 0$.

\item\label{itOneTheta} If $\al$ is countable, then $\tv{\B(\al)} =    \al+1$.

\item\label{itTwoTheta} Let $\al,\be  $ be countable and $ \be =  \gamma +n$ with $\ga$ a limit or zero.
Define
\begin{multicols}2
\[
x_\al^\be=
\begin{cases}
1 & \text{if $\om^{\om^\al}\ga=\ga$,}\\
0&\text{otherwise.}
\end{cases}
\]

\[
y_\al =
\begin{cases}
1 & \text{if $\om^\al = \al$,}\\
0&\text{otherwise.}
\end{cases}
\]
\end{multicols}
Then,
\[
\tv{\B(\Om (1+\al) +\be)} =
 \om^{\om^{ \al}}(y_\al+   \be+x_\al^\be).
\]

\item
\label{itThreeTheta}
 $\tv{\B(\Om^2+\be)} = \vartheta(\Om+ \be)$.

\end{enumerate}
\end{definition}

The function $\tv\cdot$ gives us a translation between notations based on the $\nt$ function and those based on the $\vartheta$ function.
We begin by showing that $\tv\zeta$ provides an upper bound for $\zeta$.

\begin{lemma}\label{lemmLeqTV}
If $\zeta <\A(\ve_{\Om+1}) $ then $\zeta \leq \tv\zeta$.
\end{lemma}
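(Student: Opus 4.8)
### Proof Proposal

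The plan is to prove the inequality $\zeta \leq \tv\zeta$ by induction on $\zeta < \A(\ve_{\Om+1})$, following the case structure of the definition of $\tv\cdot$. The base case $\zeta = 0$ is trivial since $\tv 0 = 0$. For the inductive step, I would first observe that $\zeta$ is either additively decomposable, in which case it can be broken up using the Cantor normal form and the inductive hypothesis applied termwise (though here one must be careful, since $\tv\cdot$ is not obviously additive — the translation mixes terms together), or it is of the form $\A(\al+\be)$ for some $\al,\be < \ve_{\Om+1}$, and then I would split according to clauses \ref{itOneTheta}--\ref{itThreeTheta} of the definition.

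The heart of the argument is the case analysis for $\zeta = \A(\mu)$. In the case $\mu = \al$ countable (clause \ref{itOneTheta}), we have $\tv\zeta = \al + 1$, and we need $\A(\al) \leq \al+1$. Since $\A = \gt_{1+{\sf Ord}}$ takes values in countable ordinals and, by Lemma \ref{lemmThetaOrd} together with the fact that $1+{\sf Ord}$ is all nonzero ordinals, $\A(\al)$ is essentially the `next available ordinal' after $\al^* = \al$, so $\A(\al) = \al+1$ exactly when $\al$ is large enough (e.g. $\al \geq \om$), and $\A(\al) \leq \al+1$ always — this should follow by an easy induction using that $\A(\al)$ is the least candidate $> \al^*$ in $1+{\sf Ord}$. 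For clause \ref{itTwoTheta}, where $\mu = \Om(1+\al) + \be$ and $\tv\zeta = \om^{\om^\al}(y_\al + \be + x_\al^\be)$, I would estimate $\A(\Om(1+\al)+\be)$ from above: the argument has $\Om$-maximal coefficient $\max\{\al^*, \be\}$, and since $\A$ collapses it below $\om^{\om^\al}$ roughly (one expects $\A(\Om(1+\al)) \approx \om^{\om^\al}$ when $y_\al = 1$), the finite/limit additive contribution $\be$ and the correction terms $x_\al^\be, y_\al$ are precisely tuned to dominate. For clause \ref{itThreeTheta}, where $\mu = \Om^2 + \be$ and $\tv\zeta = \vartheta(\Om+\be)$, I would compare the two collapsing functions directly: since $\vartheta = \gt_{\mathbb P}$ and $\nt = \gt_{1+{\sf Ord}}$ with $\mathbb P \subseteq 1+{\sf Ord}$, and since $\Om^2 + \be$ has the same $\Om$-maximal coefficient as $\Om + \be$ once we are above $\Om^2$, the value $\A(\Om^2+\be)$ should be bounded by $\vartheta(\Om+\be)$ using Lemma \ref{lemmThetaOrd} and an induction establishing that the $\vartheta$-candidate for $\Om+\be$ is also a $\nt$-candidate for $\Om^2+\be$.

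The main obstacle I anticipate is clause \ref{itTwoTheta}: the translation $\tv{\nt(\Om(1+\al)+\be)} = \om^{\om^\al}(y_\al+\be+x_\al^\be)$ involves two Boolean corrections ($y_\al$ for when $\om^\al = \al$, $x_\al^\be$ for when $\om^{\om^\al}\ga = \ga$), and getting the bound tight will require carefully tracking when these fixed-point conditions trigger and showing that in each combination the right-hand side is large enough. In particular one must handle the interaction between the additive piece $\be$ (split as $\ga + n$ with $\ga$ a limit or zero) and the exponential base $\om^{\om^\al}$, and verify that $\nt(\Om(1+\al)+\be)$ — whose value depends delicately on the collapse of $\Om(1+\al)$ relative to $\om^{\om^\al}$ — never exceeds the stated product. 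I would expect to need an auxiliary sub-induction, or a direct appeal to the fact (provable from the definition of $\gt_\mathbb X$) that $\nt(\Om \cdot \delta) \leq \om^{\om^{\prede{\delta}{}}}$-type bounds hold, to close this case; the other three cases should then be comparatively routine applications of the inductive hypothesis and Lemma \ref{lemmThetaOrd}.
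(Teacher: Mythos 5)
There is a genuine gap, and you locate it yourself: the case $\Om\leq\xi<\Om^2$ (clause \ref{itTwoTheta}) is never actually argued, only described as an anticipated obstacle, and the way you propose to close it would be circular. Appealing to ``$\nt(\Om\cdot\delta)\leq\om^{\om^{\cdots}}$-type bounds'' is appealing to (a special case of) the very lemma being proved; such bounds are not available independently and must be produced inside the induction. The idea your sketch is missing is that the entire proof runs on candidate verification, and not only in clause \ref{itThreeTheta} where you do mention it: since $\nt(\xi)$ is by definition the \emph{least} $\gt$-candidate for $\xi$, one proves $\nt(\Om(1+\al)+\be)\leq\theta:=\om^{\om^\al}(y_\al+\be+x_\al^\be)$ by showing $\theta$ is itself a $\nt$-candidate for $\Om(1+\al)+\be$. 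Concretely: take any $\zeta<\Om(1+\al)+\be$ with $\mck\Om\zeta<\theta$; if $\zeta<\Om$ then $\nt(\zeta)=\zeta+1<\theta$ because $\theta$ is a limit; otherwise write $\zeta=\Om(1+\al')+\be'$, apply the induction hypothesis (on the $\nt$-argument) to get $\nt(\zeta)\leq\om^{\om^{\al'}}(y'+\be'+x')$, and then split on how $\be'$ compares with $\ga$ and $\be$. The cases $\be'<\ga$ and $\ga\leq\be'<\be$ are easy monotonicity estimates, but the case $\be'\geq\be$ (forcing $\al'<\al$) needs a real computation: from $\be'<\theta$ write $\be'=\om^{\om^\al}\de+\eta$ with $\de<y_\al+\be+x_\al^\be$ and $\eta<\om^{\om^\al}$, and use the absorption $\om^{\om^{\al'}}\cdot\om^{\om^\al}\de=\om^{\om^\al}\de$ to push the bound below $\om^{\om^\al}(\de+1)\leq\theta$. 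Without this verification the correction terms $x_\al^\be,y_\al$ are not ``seen to be precisely tuned''; they are exactly what makes $\theta$ survive the subcase $\ga\leq\be'<\be$, $\al'=\al$. Note also that clause \ref{itThreeTheta} depends on the clause-\ref{itTwoTheta} bound (one needs $\nt(\Om\tilde\al+\tilde\be)\leq\om^{\om^{\tilde\al}}(1+\tilde\be+1)$ together with closure of $\vartheta(\Om+\be)$ under sums, products and $\om$-powers), so the two cases cannot be decoupled as your sketch suggests.

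Two smaller points. First, there is no separate ``additively decomposable'' case: since $1+{\sf Ord}$ contains every nonzero ordinal, Corollary \ref{corOmTower} gives that every nonzero $\zeta<\nt(\ve_{\Om+1})$ is of the form $\nt(\xi)$, so the induction splits only along clauses \ref{itOneTheta}--\ref{itThreeTheta}. Second, in clause \ref{itOneTheta} your parenthetical is off: $\nt(\al)=\al+1$ for \emph{every} countable $\al$ (on countable arguments $\nt$ is just the successor function), not only for $\al\geq\om$; this does not affect the inequality you need, but it signals that the behaviour of $\nt$ below $\Om$ is simpler than you assume.
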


\begin{proof}
Proceed by induction on $ \zeta$ and consider several cases.
Clearly $0\leq \tv 0$, so we assume that $\zeta = \nt (\xi)$.

\begin{Cases}

\item ($\xi<\Om$).
As shown in Example~\ref{exSigmaSucc}, $\nt$ is just the successor function in this case, yielding the claim.

\item ($\Om\leq\xi<\Om^2$).
Write $\xi=\Om (1+\al) +\be$ and $\be=\ga+n$, where $\ga<\Om$ is a limit or zero, and let $x=x_\al^\be$, $y=y_\al$ be as in the definition of $\tv\xi$.
The claim is that
\begin{align*}
\A(\Om(1+\al)+\be) &\leq \tv{\A(\Om(1+\al)+\be) }\\
 & =  \om^{\om ^\al } (y+ \be + x).
\end{align*}
Let $\theta =  \om^{\om ^\al } (y+ \be + x) $.
We check that $\A(\Om(1+\al)+\be)\leq \theta$.

First we show that $(\Om(1+\al)+\be)^* < \theta$.
It is clear that $\theta>\be$ unless $\om^{\om ^\al }   \be =\be $, but in this case $x=1$ so $\om^{\om ^\al } (y+ \be + x) > \be+1>\be$.
Similarly, $\theta>1+\al$ unless $\om^{\om^\al}=1+\al$.
This is only possible if $\om^\al=\al=1+\al$, whence $\al$ is infinite and $y=1$.
Note that $\be+x\geq 1$ since $\be=0$ implies $\om^{\om^\al}\be=\be$ and thus $x=1$.
Thus $\theta=\om^{\om^\al}(y+\be+x) \geq \al 2>1+\al$.
We conclude that $\theta>(\Om(1+\al)+\be)^*$.

It remains to show that if $\zeta<\xi$ and $\mck\Om\zeta<\theta$, then $\A(\zeta) <\theta$.
If $\zeta<\Om$, then $\zeta=\zeta^*<\theta$, and since $\theta$ is a limit, $\A(\zeta)=\zeta+1<\theta$.
Otherwise, write $\zeta=\Om(1+\al')+\be'$.
For $x' = x_{\al'}^{\be'}$ and $y' = y_{\al'} $, we have by induction that
$\A(\zeta) \leq  \om^{\om ^{\al'} } (y'+ \be' + x')$.
Consider the following cases.
\begin{Cases}
\item ($\be'<\ga$).
Then $y'+\be'+x'<\ga$, since $\ga$ must be a limit.
The induction hypothesis yields
$\A(\zeta) <    \om^{\om ^\al } \ga\leq \theta. $

\item ($\ga\leq \be' <\be$). 
In this case, $x'=x$ and $\beta$ is a successor.
Consider two sub-cases.
\begin{Cases}
\item ($\al'=\al$). Then, also $y'=y$ and the induction hypothesis yields $\A(\zeta) \leq  \om^{\om ^\al } (y+ \be + x - 1)<\theta$.

\item ($\al'<\al$).
The induction hypothesis yields
\begin{align*}
\A(\zeta) & \leq  \om^{\om ^{\al'} } (y'+ \be + x - 1)  = \om^{\om ^{\al'} } y'+ \om^{\om ^{\al'} } (\be + x -1 )\\
& < \om^{\om ^{\al'} } y'+ \om^{\om ^{\al} } (y+ \be + x  ) = \om^{\om ^{\al} } (y+ \be + x  ).
\end{align*}

\end{Cases}

\item ($\be'\geq \be$).
In this case we have that $\be'<\theta$ and $\al'<\al$.
Then, $\be'  =   \om^{\om ^\al } \delta + \eta$ for some $\delta < (y+ \be+x)$ and some $\eta<\om^{\om^\al}$.
The induction hypothesis yields
\begin{align}
\nonumber\A(\zeta)& \leq   \om^{\om ^{\al'} }  \big (
y' + \om^{\om ^\al }\de  + \eta +x'\big) \\
\nonumber &=   \om^{\om ^{\al'} }y'+ \om^{\om ^{\al'} }  \om^{\om ^\al } \de    +  \om^{\om ^{\al'} } (\eta +x')\\
\nonumber& =  \om^{\om ^{\al'} }y' + \om^{\om ^\al } \de  +  \om^{\om ^{\al'} } (\eta +x')\\
\nonumber &<   \om^{\om ^{\al'} }y' +\om^{\om ^\al } \de  +  \om^{\om ^{\al} } = \om^{\om ^{\al'} }y' + \om^{\om ^\al } (\de+1)\\
\nonumber & =  \om^{\om ^\al } (\de+1) \leq \om^{\om ^{\al} }(y+\be+x) = \theta.
\end{align}
\end{Cases}
\item ($\xi\geq \Om^2$).
Write $\xi=\Om^2+\al$.
The claim is that $\nt(\Om^2+\al) \leq \vartheta(\Om+\al) $.
To prove this, we show that $\theta :=\vartheta(\Om+\al) $ is a $\nt$-candidate for $\xi$; since $\Om^2+\al$ is defined as the least such candidate, the inequality follows.

Let us begin by showing that $\xi^*<\theta$.
Write $\al=\Om^\delta\rho+\eta$ in $\Om$-normal form.
Then, on a case-by-case basis we can check that $\xi^*\leq\max\{2,1+\al^*\} $.
For example, if $\delta=2$ and $\rho=\al^*$ we have that $\Om^2+\al=\Om^2(1+\rho)+\eta$ which has maximal coefficient $1+\rho$.
Similarly, $(\Om+\al)^*\geq \max\{1,\al^*\}$, so $\vartheta(\Om+\al) > \max\{1,\al^*\}$.
This implies that $\vartheta(\Om+\al)\geq \om$, so $\vartheta(\Om+\al) >2$, while from $\vartheta(\Om+\al) >  \al^* $ we readily obtain $\vartheta(\Om+\al) >  1+\al^* $ since $\vartheta(\Om+\al)$ is additively indecomposable.
In any case, we obtain $\vartheta(\Om+\al) >\xi^*$.
 
Next we show that if $\tilde \xi<\xi $ is such that $\tilde \xi^*< \theta$, then $\nt(\tilde \xi)< \theta$.
If $\tilde\xi<\Om$ then $ \nt (\tilde \xi) =\tilde\xi^*+1<\theta$ since $\theta$ is a limit.
If $\Om\leq\tilde\xi<\Om^2$, write $\tilde\xi= \Om (1+\tilde \al)+\tilde \be$.
Then, $\nt(\tilde\xi) \leq \om^{\om^{\tilde\al}} (1+\tilde\be+1) $.
Since $\Om+\al\geq \Om$ it follows from Lemma~\ref{lemEpsilon} that $\theta$ is an $\ve$-number and hence closed under sums, products, and $\om $-powers, so $\om^{\om^{\tilde\al}} (1+\tilde\be+1)<\theta$.
Finally, if $\Om^2 \leq \tilde\xi $ then $\tilde \xi = \Om^2 +\tilde \al$ for some $\tilde \al< \al$.
Since the function $\chi\mapsto \Om+\chi$ is strictly monotone on $\chi$, we have that $\Om   +\tilde \al <\Om   + \al $, and moreover $(\Om   +\tilde \al)^* \leq \theta $ because $\theta$ is a limit, so $\theta> 1 + (\Om^2+\tilde \al)^*  \geq  (\Om +\tilde \al)^*$.
Using the induction hypothesis, we obtain $\nt( \Om^2+\tilde\al )\leq \vartheta(\Om +\tilde\al) <\theta $, as needed.\qedhere
\end{Cases}

\end{proof}

Next we prove that $\tv\zeta$ is a {\em lower} bound for $\zeta$.
Here we need a more detailed statement, which will allow us to compare the corresponding Hardy hierarchies later.

\begin{theorem}\label{theoLeqTV}
If $\zeta <\vartheta(\ve_{\Om+1})$, then
\begin{enumerate}

\item  $\zeta = \tv\zeta$, and

\item if $0<m<\om$, then
$ \cfs {\tv\zeta} m_\vartheta \leq \cfs \zeta{m+1 }_\nt  $. 

\end{enumerate}

\end{theorem}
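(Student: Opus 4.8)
The plan is to prove both items simultaneously by induction on $\zeta$, following the case division on the $\nt$-normal form of $\xi$ when $\zeta = \nt(\xi)$, exactly as in the proof of Lemma~\ref{lemmLeqTV}. Combining Lemma~\ref{lemmLeqTV} (which gives $\zeta \leq \tv\zeta$) with the reverse inequality $\tv\zeta \leq \zeta$ established here gives item~(1); the point is that the present induction must carry the fundamental-sequence estimate in item~(2) along as a strengthened hypothesis, since the two statements feed into each other. The base case $\zeta = 0$ is immediate, and when $\xi < \Om$ we have $\nt(\xi) = \xi+1 = \tv{\nt(\xi)}$ a successor, so $\cfs{\tv\zeta}m_\vartheta = \xi = \cfs\zeta{m+1}_\nt$ and there is nothing to prove.

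For item~(1) in the range $\Om \leq \xi < \Om^2$, I would show $\tv\zeta \leq \zeta$ by verifying that $\theta := \om^{\om^\al}(y_\al + \be + x_\al^\be)$ is \emph{not} too large, i.e.\ that every relevant value $\nt(\zeta')$ for $\zeta' < \xi$ with small maximal coefficient already lies below $\zeta = \nt(\xi)$; here the induction hypothesis $\zeta' = \tv{\zeta'}$ lets us replace $\nt(\zeta')$ by the explicit $\om$-power expression and compare term by term. In the range $\xi \geq \Om^2$ we write $\xi = \Om^2 + \al$ and must show $\vartheta(\Om+\al) \leq \nt(\Om^2+\al)$; the natural route is to check that $\nt(\Om^2+\al)$ is a $\vartheta$-candidate for $\Om+\al$, using Lemma~\ref{lemmLeqTV} together with the fact (from the earlier lemma) that values $\vartheta(\Om + \tilde\al)$ with $\tilde\al < \al$ arise as $\nt(\Om^2 + \tilde\al) < \nt(\Om^2+\al)$, and that countable ordinals below $\nt(\Om^2+\al)$ are handled by their $\nt$-preimages of the form $\Om\tilde\al + \tilde\be$. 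The definition clauses \ref{itThreeTheta} and the shift $\Om^2+\al \leftrightarrow \Om+\al$ make the correspondence of candidates essentially a relabeling, so this direction should go through cleanly once the countable case is dispatched.

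For item~(2), the key is to relate the two Buchholz systems $\cfs\cdot\cdot_\vartheta$ and $\cfs\cdot\cdot_\nt$ through $\tv\cdot$. In the $\xi < \Om$ case both sides reduce to the predecessor, as noted. For $\Om \leq \xi < \Om^2$ one splits $\xi = \Om(1+\al)+\be$ on whether $\be$ is a successor, a limit of the form where $\tau(\xi)<\Om$, or has $\tau(\xi) = \Om$ (i.e.\ $\al$ a limit with $\tau(\al) = \Om$), matching the four clauses of the Buchholz-system definition; on the $\vartheta$ side $\tv\zeta = \om^{\om^\al}(y_\al+\be+x_\al^\be)$ is in Cantor normal form, so clause (2b)/(i) of the $\cfs\cdot\cdot_\vartheta$ definition and Lemma~\ref{lemCantorOne} let us compute $\cfs{\tv\zeta}m_\vartheta$ explicitly, while on the $\nt$ side clause 3 or 4 of the Buchholz system gives $\cfs\zeta{m+1}_\nt = \gt^*(\xi)\cdot(m+1)$ or a nested $\gt^{(m+1)}$ value. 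The $\xi \geq \Om^2$ case is where the clauses of the two systems align directly via \ref{itThreeTheta} plus the induction hypothesis applied to $\Om+\al$, yielding $\cfs{\vartheta(\Om+\al)}m_\vartheta$ in terms of $\cfs{\Om+\al}{\cdot}$ which matches $\cfs{\nt(\Om^2+\al)}{m+1}_\nt$ up to the translation. I expect the main obstacle to be the bookkeeping in the $\Om \leq \xi < \Om^2$ case of item~(2): one must check that the constants $x_\al^\be, y_\al \in \{0,1\}$ absorbed into the $\vartheta$-normal form do not spoil the inequality $\cfs{\tv\zeta}m_\vartheta \leq \cfs\zeta{m+1}_\nt$, in particular at the boundary points where $\be$ switches between successor and limit and where $\al$ is a fixed point of $\xi \mapsto \om^\xi$; getting the offset $m$ versus $m+1$ to line up here is precisely why the statement uses $m+1$ on the $\nt$ side and why it is stated for $m \geq 1$.
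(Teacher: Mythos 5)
There is a genuine gap, on two counts. First, your treatment of item (1) is misdirected in the range $\Om\leq\xi<\Om^2$: to get $\tv\zeta\leq\zeta$, i.e.\ $\om^{\om^\al}(y_\al+\be+x_\al^\be)\leq\nt(\xi)$, it does not help to check that the values $\nt(\zeta')$ for $\zeta'<\xi$ with small coefficients lie below $\nt(\xi)$ --- that is automatic from the definition of $\nt(\xi)$ as a candidate, and it can only reproduce the upper bound of Lemma \ref{lemmLeqTV}. What is needed is a \emph{lower} bound, i.e.\ that no smaller ordinal is an $\nt$-candidate, which amounts to showing that values $\nt(\zeta')$ with $\zeta'<\xi$ are cofinal in $\tv\zeta$. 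The paper's proof gets this for free from item (2): since $\cfs\zeta m_\nt\nearrow\zeta$ and $\cfs{\tv\zeta}m_\vartheta\nearrow\tv\zeta$, the inequality $\cfs{\tv\zeta}m_\vartheta\leq\cfs\zeta{m+1}_\nt$ yields $\tv\zeta\leq\zeta$ in the limit, so item (1) is never argued separately. Your separate candidate-style argument for $\xi\geq\Om^2$ (showing $\nt(\Om^2+\al)$ is a $\vartheta$-candidate for $\Om+\al$) points in the right direction but would additionally require bounding $\vartheta(\zeta')$ for countable $\zeta'$ below $\nt(\Om^2+\al)$, extra work the limit argument avoids.

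Second, and more seriously for item (2), the induction as you set it up does not close. In the subcase $\be=\ga+\om^\de$ with $\de$ a limit satisfying $\de=\om^\de$, the plain induction hypothesis only gives $\cfs{(\ga+\de)}{m+1}_\nt\geq\ga+\cfs\de m_\vartheta$, whereas the computation of $\cfs{\tv\zeta}m_\vartheta$ requires the stronger bound $\ga+\om^{\cfs\de m_\vartheta}\leq\cfs{(\ga+\de)}{m+1}_\nt$; likewise, for $\xi\geq\Om^2$ one has $\tv\zeta=\vartheta(\Om+\xi')=\om^{\vartheta(\Om+\xi')}$, so whenever this value occurs as the last Cantor-normal-form term of a larger ordinal the same strengthening is needed, and the proof in that range must in fact establish $\om^{\cfs{\tv\zeta}m_\vartheta}\leq\cfs\zeta{m+1}_\nt$. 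This is exactly the side condition \eqref{eqSide} carried through the paper's induction. You correctly flag the $\om$-fixed-point boundary and the $m$ versus $m+1$ offset as the danger spots, but you do not supply the strengthened hypothesis that resolves them, and without it the key inequality in those subcases is not available. Your confusion of the $\nt$-side clauses (writing $\cfs\zeta{m+1}_\nt=\gt^*(\xi)\cdot(m+1)$, which is the $\vartheta$-specific clause, rather than $\nt\big(\cfs{\check\xi}{m+1}+\nt^*(\xi)\big)$ or the nested $\nt^{(m+1)}$ form) is a smaller slip, but it suggests the case computations would need to be redone with the correct formulas.
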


\begin{proof}
Proceed by induction on $ \zeta$.
The first claim follows from the second, since $\zeta\leq\tv\zeta$ by Lemma \ref{lemmLeqTV}, and moreover the induction hypothesis applied to the second claim yields
\[\zeta =   \lim_{m \to\infty} \cfs\zeta m_\nt \geq \lim_{m\to\infty}  \cfs {\tv\zeta} m_\vartheta = \tv \zeta . \]
Thus we focus on the second claim.

The claim is trivial for $\zeta=0$, so we assume otherwise and write $\zeta =\nt(\xi)$.
Proceed by induction on $\zeta$.
For the induction hypothesis to go through, we also need to establish the side condition that if $\de=\om^\de$ and $\tv\zeta=\ga+\de$ in Cantor normal form, then
\begin{equation}\label{eqSide}
\ga+\om^{\cfs \de m}\leq  \cfs{\zeta} {m+1}_\nt.
\end{equation}
Consider the following cases.

\begin{Cases}

\item ($\xi <\Om$).
Then, $\cfs {\nt(\xi)} m_\nt = \xi $ and also $\cfs {\tv{\nt(\xi)}} m_\vartheta = \cfs{(\xi+1)}m_\vartheta = \xi $ for all $m$.

\item ($\Om\leq\xi<\Om^2$).
Write $\xi= \Om (1+\al) +\be$ and let $x=x_\al^\be$, $y=y_\al$ be as in the definition of $\tv\xi$, so that
$\tv\xi = \om^{\om^\al} (y+\be+x)$.
Consider the following sub-cases.
\begin{Cases}

\item ($y+\be+x =0$).
This case is impossible since $\be=0$ implies that $x=1$.

\item ($y+\be+x \in \rm Lim$).
Then, $\be \in \rm Lim$ and $x = 0$, so $\tv \zeta  = \om^{\om^\al}\be >\be  $.
We claim that $\xi \notin {\rm JUMP}(1+{\sf Ord})$; for otherwise, $\be>\al$ and $\be=\nt(\xi')$ for some $\xi'>\xi$.
From $\om^{\om^\al}\be >\be$ we see that $\om^{\om^\al}\rho >\be$ for some $\rho<\be$, whereas the induction hypothesis and the definition of $\nt$ yield
\[\be=\nt(\xi') \geq \nt(\Om(1+\al) +\rho) \geq \om^{\om^\al}\rho >\be,\]
a contradiction.

Next, consider the following sub-cases.
\begin{Cases}

\item ($\be= \ga+\om^{\de+1}$).
Then,
\begin{align*}\cfs\zeta {m+1}_\nt& = \cfs{\nt(\Om(1+\al)+\ga+\om^{\de+1})}{m+1} _\nt\\
& =  \nt\big (\Om(1+\al)+\cfs{(\ga+\om^{\de+1})} {m+1}_\nt \big ) \\
 & \stackrel{\text{\sc ih}}\geq  \nt\big (\Om(1+\al)+\cfs{(\ga+\om^{\de+1})} {m}_\vartheta \big ) \\
&  = \nt\big (\Om(1+\al)+ \ga+\om^{\de}   m \big )\\
 & = \om^{\om^\al}(y + \ga + \om^{\de }  m+ x'),
\end{align*}
where $x'=x^{ \ga + \om^{\de }\cdot m}_\al$.
Meanwhile,
\begin{align*}
\cfs{\tv \zeta}m_\vartheta & = \cfs{(\om^{\om^\al}\be)}m_\vartheta 
 =  \cfs{\om^{\om^\al}\ga+\om^{\om^\al+{\de+1}}}m_\vartheta\\
& = \om^{\om^\al}\ga+ \om^{\om^\al+\de}  m =\om^{\om^\al} (\ga+ \om^\de  m) .
\end{align*}
Clearly
\[\ga+ \om^\de  m \leq y + \ga + \om^{\de }  m + x' . \]
From here it is easy to see that $ \cfs {\tv\zeta} m_\vartheta \leq \cfs \zeta{m+1 }_\nt$. 

\item ($\be= \ga+\om^\de$ with $\de\in \rm Lim$).
Then,
\begin{align*}\cfs\zeta {m+1}_\nt& = \cfs{\nt(\Om(1+\al)+\ga+\om^\de)}{m+1} _\nt\\
& =  \nt\big (\Om(1+\al)+\cfs{(\ga+\om^\de)} {m+1}_\nt \big ) \\
 &  \stackrel{\text{\sc ih}}\geq \nt  (\Om(1+\al)+ \ga+ \om^{\cfs \de  m_\vartheta}    ) \\
& = \om^{\om^\al}(y + \ga + \om^{\cfs { \de} m_\vartheta}+ x'),
\end{align*}
where $x'=x^{ \ga + \cfs{\om^\de} m_\vartheta}_\al$ and the induction hypothesis also uses \eqref{eqSide} if $\de=\om^\de$.

On the other hand, note that $\om^{\al}+\de>\de$ since otherwise $\om^{\om^\al}\be=\be$, so
\begin{align*}
\cfs{\tv \zeta}m_\vartheta & = \cfs{\big (\om^{\om^\al}(\ga+{\om^\de})\big )}m_\vartheta 
 =  \cfs{\om^{\om^\al}\ga+\om^{\om^\al+\de}}m_\vartheta \\
& = \om^{\om^\al}\ga+ \om^{\om^{\al}+\cfs\de m_\vartheta}
  = \om^{\om^\al}(\ga+ \om^{ \cfs\de m_\vartheta}) .
\end{align*}
From this it is clear that $ \cfs {\tv\zeta} m_\vartheta \leq \cfs \zeta{m+1 }_\nt$.

\end{Cases}

\item ($y+\be+x \in \rm Succ$).
In this case, it will be convenient to first show that

\begin{claim}\label{claimJump}
$ \xi \in {\rm JUMP} (1+{\sf Ord})$ and $\nt ^*(\xi) = \om^{\om^\al}(y+\be+x-1)$.
\end{claim}

\begin{proof}[Proof of Claim.]
In order to prove the claim note that by definition of $x$, $ \be$ is either zero, a successor, or a limit with $\om^{\om^\al}\be = \be$.
Thus we consider each of these cases.

\begin{CasesA}
\item ($\be=0$).
In this case, we immediately have $ \xi \in {\rm JUMP} (1+{\sf Ord})$.
Moreover, $x=1$, and $y=1$ if and only if $\al=\om^\al$.

We consider two sub-cases, depending on whether $\al<\om^\al$ or $\al = \om^\al$.
If $\al<\om^\al$, then $y=0$, and since $\be+x=1$, $\om^{\om^\al}(y+\be+x-1) =0$.
Thus we must show that $\sigma^*(\Om(1+\al)+\be)= 0$.
Note that $\nt^*(\Om(1+\al ) )=0$ if $ 1+\al $ is not of the form $\nt^*(\xi')$ for some $\xi'>\xi$.
Since $\nt$ is injective, in order to prove this it suffices to find $\xi'<\xi$ with $1+\al=\nt(\xi')$.

If $\al$ is a successor, then $1+\al=\nt(1+\al-1)$.
Otherwise, we may write $\al=\ga'+\om^{\al'} n$, where $n>0$ and $\ga'$ is either zero or of the form $\eta+\om^{\de }$ with $\de >\al'$.
Observe that $\ga'=\om^{\al'}\ga$ for some $\ga$ which is either zero or a limit, so $\al=\om^{\al'}(\ga+n)$.
Let $x'=x^\ga_{\al'}$ and $y'=y_{\al'}$.
We claim that $\beta':=-y'+\ga+n -x'$ is well-defined.
We have that $n>0$, so $ n -x'\geq 0$.
If $\ga$ is a limit or $y'=0$, then $\beta' = \ga+n -x'$.
Otherwise, $\ga=0$ and $\al'=\om^{\al'}$; but then $\al\neq \al'$ (as $\al<\om^\al$), so we cannot have $n=1$.
It follows that $ n\geq 2$, hence $\beta' = -y'+ n -x' \geq n-2$.
Moreover, $y'+\be'+x'=\ga+n$, so the induction hypothesis yields $\al=\om^\de(\ga+n)=\nt(\Om(1+\al')+\be' )$, i.e.~$\al=\nt(\xi')$ with $\xi':=\Om(1+\al')+\be'<\xi$.
We conclude that $\nt^*(\Om(1+\al ) )=0$, as needed.

If $\al=\om^\al$, recalling that $\be = 0$ and $x=1$, we have that $y=1$ and $\al=\om^{\om^\al}=\om^{\om^\al}(y+\be +x-1)$.
Therefore, in this case we must show that $\sigma^*(\Om(1+\al) )=\al$, which holds when $\al=\nt(\xi')$ for some $\xi'>\xi$.
Since $\nt$ is surjective outside of zero, this means that $\al\neq \nt(\xi')$ if $\xi'<\xi$.
Thus we will prove the latter.

First note that $\al$ cannot be a successor, so $\al\neq\nt(\xi')$ if $\xi'$ is countable.
Moreover, $1+\al=\al=\om^{\om^\al}$ and $\om^{\om^\al}>\om^{\om^{\al'}}(y'+\be'+x')$ whenever $\al',\be'< \al$.
In view of the induction hypothesis, this rules out $1+\al=\nt(\xi')$ if $\xi>\xi'=\Om(1+\al')+\be'$.
Thus indeed $ 1+\al \neq \nt (\xi')$ for any $\xi'<\xi$ and $\sigma^*(\Om(1+\al) )=\al$.
In either case, the claim holds.

\item ($\be\in\rm Succ$). 
Once again, we immediately have $ \xi \in {\rm JUMP} (1+{\sf Ord})$, and
\[\nt ^*(\xi) = \nt(\Om(1+\al)+\be-1) \stackrel{\text{\sc ih}} = \om^{\om^\al} (y+\be+x-1),\]
since the values of $x,y$ are unchanged.

\item ($\be\in\rm Lim$). We claim that $\be\neq \nt(\xi')$ for any $\xi'<\xi$.
Any $\xi'<\xi$ is either countable, so $\nt(\xi')$ is a successor and $\be\neq \nt(\xi')$, or else it is of the form $\Om(1+\al')+\be'<\Om(1+\al)+\be$.
In this case $\al'\leq \al$ and if $\be'\geq \be$ it follows that $\nt(\xi')>\be$, while if $\be'<\be $,
\[ \nt(\Om(1+\al')+\be') \leq \nt(\Om(1+\al) +\be')  \stackrel{\text{\sc ih}}\leq  \om^{\om^{\al}}(1+\be'+1) < \om^{\om^{\al}}\be = \be.\]
It follows that $\be\neq \nt(\xi')$ for any $\xi'<\xi$.
Given that $\be\in \nt(\ve_{\Om+1})\cap (1+{\sf Ord})$, $\be = \nt(\xi')  $ for some $\xi'>\xi$, and $ \xi \in {\rm JUMP} (1+{\sf Ord})$ as claimed.
It moreover follows using the induction hypothesis that
\[\nt^*(\xi) = \be =\om^{\om^\al}\be = \nt(\Om(1+\al)+\be-1). \]
This covers all cases and concludes the proof of Claim \ref{claimJump}.\qedhere
\end{CasesA}
\end{proof}

With this claim at hand, we consider cases according to the shape of $\al$.

\begin{Cases}
\item ($\al = 0$).
In this case, we have that
\begin{align*}
\cfs{\tv\zeta}m_\vartheta & = \cfs{\om^{\om^0} (y+\be+x)}m_\vartheta\\
& =  \cfs{\big ( \om  (y+\be+x-1) + \om \big )}m_\vartheta\\
& = \om  (y+\be+x-1) +   m.
\end{align*}
Meanwhile,
\[\cfs\zeta {m+1}_\nt = \nt ( \cfs\zeta m_\nt ) =  \cfs\zeta m_\nt + 1.\]
By a subsidiary induction on $m$, we prove that 
\[\cfs\zeta {m }_\nt\geq  \om  (y+\be+x-1) +  m.\]
For $m=0$, we have that
\[\cfs\zeta {0}_\nt = \nt^*(\zeta) = \om  (y+\be+x-1)+0.\]
For the inductive step, we merely note that $\cfs\zeta {m+1}_\nt > \cfs\zeta {m }_\nt$, hence 
\[\cfs\zeta {m+1}_\nt \geq \cfs\zeta {m }_\nt+1\stackrel{\text{\sc ih}}\geq \om (y+\be+x-1)+m+1,\]
as needed.

\item ($\al\in \rm Lim$).
In this case, $1+\al=\al$.
We claim that $\cfs{ \al } {m+1}_\nt \geq 1+\cfs\al  m_\vartheta$.
If $\cfs{ \al } {m}_\vartheta $ is infinite then this follows from the induction hypothesis, since 
$\cfs{ \al } {m+1}_\nt \geq   \cfs\al  m_\vartheta = 1+ \cfs\al  m_\vartheta$.
If $\cfs\al  m_\vartheta\leq 1$, we note that since fundamental sequences are strictly increasing, 
$\cfs{ \al } {m+1}_\nt \geq m+1\geq 2$.
Finally, if $1<\cfs\al  m_\vartheta <\om$, Lemma \ref{lemmFinVal} yields $\al=\om$, so that $\cfs\al  m_\vartheta  = m$.
But once again we use $\cfs{ \al } {m+1}_\nt \geq m+1$ to obtain the desired inequality.

Then,
\begin{align*}\cfs\zeta {m+1}_\nt& = \cfs{\nt(\Om \al +\be ) }{m+1} _\nt
 =  \nt   \big( \Om\cfs{ \al } {m+1}_\nt + \nt^*(\xi) \big  ) \\
 &  \geq  \nt  \big   (\Om(1+\cfs\al  m_\vartheta) + \om^{\om^\al} (y+\be+x-1) \big   )\\
 & \stackrel{\text{\sc ih}}  =   \om^{\om^{\cfs \al m_\vartheta}} \cdot  \big ( y' + \om^{\om^\al} (y+\be+x-1  ) + x' \big )\\
\end{align*}
for  $  y' =y_{\cfs\al m_\vartheta}$ and $x'=x^{\om^{\om^\al} (y+\be+x-1  )}_{\cfs\al m}$.
Now, note that
\[\om^{\om^{\cfs \al m_\vartheta}} \cdot \om^{\om^\al} (y+\be+x-1  ) = \om^{\om^\al} (y+\be+x-1  ),\]
so $x'=1$.
Hence,
\begin{align*}\cfs\zeta {m+1}_\nt& \geq  \om^{\om^{\cfs \al m_\vartheta}} \cdot  \big ( y' + \om^{\om^\al} (y+\be+x-1  ) + x' \big )\\
&\geq \om^{\om^{\cfs \al m_\vartheta}} \cdot  \big (  \om^{\om^\al} (y+\be+x-1) +1 \big )  \\
&  =     \om^{\om^\al} (y+\be+x-1 )   + \om^{\om^{\cfs \al m_\vartheta}} .
\end{align*}

Meanwhile,
\begin{align*}
\cfs{\tv \zeta}m_\vartheta & = \cfs{\big (\om^{\om^\al}(y+\be+x) \big )}m_\vartheta \\
&  \leq     \om^{\om^\al} (y+\be+x-1 )   + \om^{\om^{\cfs \al m_\vartheta}} 
\end{align*}
(where the last inequality could be strict if $\al=\om^\al$ but $\cfs\al m_\vartheta<\om^{\cfs \al m_\vartheta}$).
The claim follows.

\item ($\al =\eta+1 \in \rm Succ$).
In this case, we have that
\begin{align*}
\cfs{\tv\zeta}m_\vartheta & = \cfs{\om^{\om^\al} (y+\be+x)}m_\vartheta\\
& =  \cfs{\big ( \om^{\om^\al} (y+\be+x-1) + \om^{ \om^{ \eta +1}} \big )}m_\vartheta\\
& = \om^{\om^\al} (y+\be+x-1) + \om^{\om^\eta \cdot m},
\end{align*}
and also
\[\cfs\zeta {m+1}_\nt = \nt (\Om(1+\eta) + \cfs\zeta m_\nt ) = \om^{\om^\eta } (y'+\cfs\zeta m_\nt+x')\]
for suitable $x',y'$.
By a subsidiary induction on $m$, we prove that 
\[\cfs\zeta {m+1}_\nt\geq  \om^{\om^\al} (y+\be+x-1) + \om^{\om^\eta \cdot m}.\]
For $m=0$, first we note that by Claim~\ref{claimJump}, $\cfs\zeta {0}_\nt = \nt^*(\zeta) = \om^{\om^\al} (y+\be+x-1)$, hence since $\cfs\zeta { 1}_\nt > \cfs\zeta {0}_\nt$ we see that
\[\cfs\zeta { 1}_\nt \geq   \om^{\om^\al} (y+\be+x-1)+1 = \om^{\om^\al} (y+\be+x-1) + \om^{\om^\eta \cdot 0}. \]
For the inductive step, we see that
\begin{align*}
\cfs\zeta {m+2}_\nt & =  \om^{\om^\eta } (y'+\cfs\zeta {m+1}_\nt+x')\\
&\stackrel{\text{\sc ih}}\geq \om^{\om^\eta } (y'+ \om^{\om^\al} (y+\be+x-1) + \om^{\om^\eta \cdot m} +x')\\
& \geq   \om^{\om^\al} (y+\be+x-1) + \om^{\om^\eta \cdot (m+1)},
\end{align*}
as needed.
\end{Cases}

\end{Cases}

\item ($\xi\geq \Om^2$).
In this case, we have that $\tv\zeta = \vartheta(\Om+\xi' )$.
Note that by Lemma~\ref{lemEpsilon}, $\xi\geq \Om $ implies that $\vartheta(\xi) = \om^{\vartheta(\xi)}$, meaning that we must prove that \eqref{eqSide} holds.

Write $\xi = \Om^2+\ga $ and further write $\ga=\al+\be$ in such a way that $\check \xi = \Om^2+\al $, so that $\be<\Om$ is possibly zero.
Observe that all infinite coefficients of $\xi $ are also coefficients of $\ga$.
Thus, either $(\Om^2+\ga)^*=(\Om +\ga)^*$ or both are finite, and similarly $\tau (\Om^2+\ga) =\tau (\Om +\ga)$  if they are infinite.

Let $v=1$ if the $\Om$-normal form of $\xi$ begins with a term $\Om^2k$ with $k\in (0,\om)$, and otherwise let $v=0$.
Similarly, let $w=1$ if the $\Om$-normal form of $\Om+\ga$ begins with a term $\Om k$ with $k\in (0,\om)$, and otherwise let $v=0$.
Note that $\Om^2+\ga=\Om^2v+\ga $ and similarly $\Om +\ga=\Om w+\ga $.
The crucial observation here is that  if $\ga>0$, then by Lemma~\ref{lemmFSSum}, we have that for all $\chi$, $\fs{(\Om^2v+\ga)}\chi =\Om^2v+\fs{\ga}\chi $, and similarly  $\fs{(\Om w+\ga)}\chi =\Om w+\fs{\ga}\chi $, with the same observation holding for $\al$ in place of $\ga$.
In particular, if $\ga>0$, then $\fs{(\Om^2v+\ga)} 1^*< (\Om^2v+\ga) ^*$ iff $\fs{(\Om w+\ga)} 1^*< (\Om w+\ga) ^*$, except possibly in cases where the maximal coefficients are finite.

\begin{claim}
$\nt^*(\Om^2+\ga)= \vt^*(\Om +\ga)$.
\end{claim}

\begin{proof}[Proof of claim]
If $\nt^*(\Om^2+\ga) > 0$ then by the definition of $\nt^*$, $\nt^*(\Om^2+\ga) = \nt (\Om^2+ \tilde\ga) $ for some $\tilde \ga$ with $\tilde \ga+1\geq \ga$.
Then, the induction hypothesis (for $\nt (\Om^2+ \tilde\ga)<\zeta$) yields $\nt (\Om^2+ \tilde\ga) = \vt (\Om + \tilde\ga)$.
We have that either $\nt^*(\Om^2+\ga)=\tau( \Om^2+\ga )$ or $\nt^*(\Om^2+\ga)= \nt (\Om^2+\ga-1)$ when $\ga$ is a successor; in either case, as we have noted that the two share maximal coefficients and terminal parts, it can be checked that $\nt^*(\Om^2+\ga)=\vt^*(\Om +\ga)$, as claimed.

The argument is analogous if $\vartheta ^*(\Om +\ga) > 0$: $\vartheta ^*(\Om +\ga)=\vartheta{(\Om+\tilde\ga)}$ for some $\tilde\ga$ with $\tilde\ga+1\geq \ga$, and by induction we have that $ \nt(\Om^2+\tilde\ga ) = \vartheta{(\Om+\tilde\ga)}$.
Reasoning as above, it follows that $\nt^*(\Om^2+  \ga) = \vt^*(\Om +  \ga) $.

Otherwise, $\nt^*(\Om^2+\ga) = 0 = \vt^*(\Om +  \ga)$, so equality holds in all cases.
\end{proof}

Next we consider two sub-cases.
\begin{Cases}

\item ($\check \xi <\Om^2+\Om$).
Recall that $\check \xi=\Om^2+\al$.
Then, $\al<\Om$ and $\al$ is either zero, a successor, or $\al = \nt(\Om^2+\tilde\xi)$ for some $\tilde\xi>\al$.
Let $\theta = \nt^*(\xi)  $ and let $z=1$ if $\theta>0$, otherwise $z=0$.

By a simple computation using Lemma~\ref{lemmThetaExp}, $\cfs{\vartheta(\Om +\ga)} m_\vartheta = \om_m( \theta+ z )$.
Recall that we need to establish \eqref{eqSide}, for which it suffices to show that $\cfs\zeta {m +1}_\nt \geq  \om_{m +1}(\theta +1 )$ for all $m> 0$.
We first consider the additional case of $m=0$.
We compute $\cfs\zeta {0 } _\nt = \theta$, so that $\cfs\zeta { 1 } _\nt = \sigma(
\Om\theta) = \om^{\om^\theta}(y_\theta+1)$ (since $x^0_\theta=1$).
If $\theta=0 $ then  $\om^{\om^\theta}> \theta+1 $, and otherwise by Lemma~\ref{lemEpsilon} we see that $\theta=\om^\theta$, so $y_\theta=1$ and $\om^{\om^\theta}(y_\theta+1) >\theta+1 =\om_0(\theta+1)$.
Thus for the inductive step we may uniformly assume that $m\geq 0$ and $\cfs\zeta {m +1}_\nt \geq  \om_{m  }(\theta +1 )$ to obtain
\begin{align*}
\cfs \zeta {m+2} _\nt & = \nt(\Om \cfs\zeta {m+1}_\nt   ) =\om^{\om^{\cfs\zeta {m+1}_\nt }}\\
& \stackrel{\text{\sc ih}} \geq\om^{\om^{\om_{m }(\theta +1 ) }} = \om_{m+2}(\theta +1).
\end{align*}
We conclude that $\cfs \zeta {m+1} _\nt\geq\om^{\cfs{\tv\zeta}m_\vartheta} $, as needed.

\item ($\check \xi \geq \Om^2+\Om$).
Recall that $\theta:=\vartheta^*( \Om w+\ga) = \nt^*( \Om^2v+\ga )$ and $\check \xi = \Om^2+\al$ and consider two sub-cases.
\begin{Cases}

\item ($ \tau<\Om$).
We have that
\begin{align*}
\cfs\zeta{m+1}_\nt & = \nt(\Om^2v+\fs {\al} {\cfs\tau {m+1}_\nt} +\theta)\\
 & \stackrel{\text{\sc ih} } \geq \nt(\Om^2v+\fs {\al} {\cfs\tau m_\vartheta} +\theta)\\
  & \stackrel{\text{\sc ih} } = \vartheta(\Om w +\fs {\al} {\cfs\tau m_\vartheta} +\theta)\\
   &  =\om^{ \vartheta(\Om w +\fs {\al} {\cfs\tau m_\vartheta} +\theta)}= \om^{\cfs{\tv\zeta}m_{\vartheta}}.
\end{align*}

\item ($ \tau=\Om$).
We prove by a subsidiary inducion on $m$ that
\[\cfs\zeta{m+1}_\nt \geq   \cfs{\tv\zeta}{m+1}_{\vartheta} = \om^{\cfs{\tv\zeta}{m+1}_{\vartheta} }.\]
We first observe that $\cfs\zeta{0}_\nt = \theta=   \cfs{\tv\zeta}{0}_{\vartheta}$, so we may assume uniformly that $\cfs\zeta{m }_\nt \geq   \cfs{\tv\zeta}{m }_{\vartheta}$.

Then,
\begin{align*}
\cfs\zeta{m+1}_\nt & = \nt(\Om^2 v +\fs \al {\cfs\zeta{m }_\nt })
 \stackrel{\text{\sc ih}(m)} \geq  \nt(\Om^2 v +\fs \al { \cfs{\tv\zeta}{m }_{\vartheta}
 })\\
  & \stackrel{\text{\sc ih}(\zeta)} =  \vartheta (\Om w +\fs \al { \cfs{\tv\zeta}{m }_{\vartheta}
 })
   = \om^{ \vartheta(\Om w +\fs \al { \cfs{\tv\zeta}{m }_{\vartheta}
 })}\\
 & = \om^{\cfs{\tv\zeta}{m+1 }_{\vartheta}}.
\end{align*}
\end{Cases}
\end{Cases}
\end{Cases}
This covers all cases and concludes the proof.
\end{proof}

\section{The Bachmann property}\label{secBachmannP}
In this section we show that our systems of fundamental sequences enjoys the Bachmann property.
For the proof by induction to work, we need to prove a somewhat more general property.
For this, we extend the definition of $\cfs\cdot\cdot$ to set $\cfs\xi n = \fs \xi 0$ if $\tau(\xi) = \Om$; this is merely a technical artifice to avoid treating the case $\tau(\xi) = \Om$ separately.

\begin{theorem}[Bachmann property]\label{theoBach}
Let $\mathbb X$ be either $1+\sf Ord$ or $\mathbb P$ and let $\gt=\gt_\mathbb X$.
Let $\al,\ga \in \constr$ and $x>0$ be such that $\tau(\al)<\Om$ and $\cfs\al x<\ga<\al $.
Then, $\cfs\al x\leq \nfs\ga 1$.
\end{theorem}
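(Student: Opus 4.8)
The plan is to prove the statement by induction on $\al$, with a secondary induction on $x$ where needed, following the structure of the definition of the Buchholz system. The main tool will be Lemma~\ref{lemmBackwards}, which describes the values $\cfs{\gt(\xi)}n$ as $\gt(\tilde\xi)$ for suitable $\tilde\xi<\xi$ with good control on $\mc{\tilde\xi}$; combined with Lemma~\ref{lemmThetaOrd} (to translate inequalities between $\gt$-values into inequalities between arguments and their maximal coefficients) and Lemma~\ref{lemmFundProp}\eqref{itFundPropMajor} (to push a bound on $\mc\xi$ through a fundamental sequence $\fs\lambda\theta$), this should let us reduce the Bachmann inequality for $\al$ to a Bachmann-type inequality for a smaller ordinal, or to a direct computation. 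First I would dispose of the cases where $\al$ is not of the form $\gt(\xi)$ with $\gt(\xi)\in\mathbb X'$: if $\al\notin\mathbb X$ (so $\mathbb X=\mathbb P$ and $\al=\om^{\ga_0}+\de$ in Cantor normal form) the claim reduces, via $\cfs\al x=\om^{\ga_0}+\cfs\de x$ and the Cantor-normal-form behaviour of both fundamental sequences, to the case of $\de$; if $\al\in\mathbb X\setminus\mathbb X'$ then $\al$ is isolated in $\mathbb X$ and $\cfs\al x$ is either $0$ or a successor's predecessor, handled directly.

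So the heart of the matter is $\al=\gt(\xi)\in\mathbb X'$. Here I would split according to the clauses of the Buchholz system, i.e.\ according to whether $\tau(\check\xi)<\Om$ or $\tau(\check\xi)=\Om$, and within the first clause whether $\gt^*(\xi)>0$. In each case $\cfs\al x = \gt(\tilde\xi)$ for an explicit $\tilde\xi<\xi$ (from Lemma~\ref{lemmBackwards}), and the hypothesis $\cfs\al x<\ga<\al$ together with Lemma~\ref{lemmThetaOrd} tells us $\gt(\tilde\xi)<\ga<\gt(\xi)$ and hence $\mc{\tilde\xi}\le\tilde\xi^*<\ga$ while $\ga^*<\gt(\xi)$. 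Using Corollary~\ref{corOmTower} we can try to write $\ga=\gt(\eta)$ for some $\eta<\xi$ (this requires $\ga\in\mathbb X$; if $\ga\notin\mathbb X$ one first passes to the largest element of $\mathbb X$ below $\ga$, or argues that $\cfs\ga 1\ge\cfs\al x$ directly from additive-indecomposability of $\gt(\tilde\xi)$, much as in Lemma~\ref{lemmBackwardsExt}). The goal is then to show $\gt(\tilde\xi)\le\cfs{\gt(\eta)}1 = \gt(\tilde\eta)$, where $\tilde\eta<\eta$ is again supplied by Lemma~\ref{lemmBackwards} applied at $\eta$; by Lemma~\ref{lemmThetaOrd} this will follow once we show $\tilde\xi^*<\gt(\tilde\eta)$, i.e.\ once we locate $\tilde\xi$ below $\tilde\eta$ with its maximal coefficient small enough. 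This is where the induction hypothesis enters: applying the theorem at the ordinal $\tau(\check\xi)$ (or at $\mc\xi$, using case~\eqref{itBackTwo} of Lemma~\ref{lemmBackwards}) gives $\cfs{\tau}x\le\cfs{\tau'}1$ for the relevant truncations, and feeding this into the monotonicity of $\fs\cdot\cdot$ in its subscript (Lemma~\ref{lemmFundProp}, item~2, and Lemma~\ref{lemmBoundMC}) yields $\tilde\xi\le\tilde\eta$ or at least $\mc{\tilde\xi}\le\mc{\tilde\eta}$, which suffices.

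The case $\tau(\check\xi)=\Om$ is the delicate one and I expect it to be the main obstacle. There $\cfs\al x=\gt^{(x)}(\xi)$ is defined by the inner recursion $\gt^{(0)}(\xi)=\gt^*(\xi)$, $\gt^{(i+1)}(\xi)=\gt(\fs{\check\xi}{\gt^{(i)}(\xi)})$, and similarly $\cfs\ga 1$ (when $\ga=\gt(\eta)$ with $\tau(\check\eta)=\Om$) unwinds one step to $\gt(\fs{\check\eta}{\gt^*(\eta)})$. One must show the whole tower $\gt^{(x)}(\xi)$ stays below $\gt(\fs{\check\eta}{\gt^*(\eta)})$, which I would do by first proving, by the secondary induction on $i$, that every $\gt^{(i)}(\xi)$ is a $\gt$-candidate-compatible value below $\ga$ — precisely, that $\gt^{(i)}(\xi)<\ga$ and $(\fs{\check\xi}{\gt^{(i)}(\xi)})^*<\ga$ — and then invoking Lemma~\ref{lemmThetaOrd} and the definition of $\gt(\eta)$ as a candidate to jump from $\fs{\check\xi}{\gt^{(i)}(\xi)}<\eta$ (with small maximal coefficient) to $\gt^{(i+1)}(\xi)<\gt(\eta)=\ga$. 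Getting $\fs{\check\xi}{\gt^{(i)}(\xi)}<\eta$ will again need Lemma~\ref{lemmFundProp}\eqref{itFundPropMajor} together with a bound $\mc{\check\xi}<\gt(\eta)$ obtained from $\xi^*<\gt(\xi)$ and the hypothesis $\ga<\gt(\xi)$; the subtlety is keeping $\gt^*(\xi)$ and $\gt^*(\eta)$ aligned, for which one uses that $\check\xi,\check\eta$ share the same $\Om$-leading part when $\ga$ and $\al$ have related normal forms, together with Lemma~\ref{lemThetBetPri}. Once the tower is bounded by $\ga$, the final step $\gt^{(x)}(\xi)\le\cfs\ga 1$ is obtained by one more application of Lemma~\ref{lemmThetaOrd} together with the $i=0$ base case $\gt^*(\xi)\le\gt^*(\eta)$ or $\gt^*(\xi)<\ga$, so that $\fs{\check\xi}{\gt^{(x-1)}(\xi)}$ lies below $\fs{\check\eta}{\gt^*(\eta)}$ with controlled maximal coefficient.
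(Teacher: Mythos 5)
There is a genuine gap, and it lies in your choice of induction measure together with a silent restriction to countable $\al$. Your case split ($\al\notin\mathbb X$, $\al\in\mathbb X\setminus\mathbb X'$, $\al=\gt(\xi)\in\mathbb X'$) only covers $\al<\Om$, but the theorem is stated for all $\al\in\constr$ with $\tau(\al)<\Om$, and the case $\al\geq\Om$ is not a dispensable side case: it is the engine that drives the induction. In the hard configuration $\al=\gt(\xi)$, $\ga=\gt(\zeta)$, both in $\mathbb X'$, with $\fs{\check\xi}{\tau_x}<\check\zeta<\check\xi$, the inequality one ultimately needs is $\fs{\check\xi}{\tau_x}\leq\cfs{\check\zeta}1$, i.e.\ precisely a Bachmann inequality between fundamental-sequence values of the \emph{uncountable} ordinals $\check\xi$ (or rather $\fs{\check\xi}{\pi}$) and $\check\zeta$, both of which exceed $\al$. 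Plain induction on the ordinal $\al$, even with a secondary induction on $x$, cannot supply this instance, since the ordinal at which you need the theorem is larger than the one you are proving it for. The paper resolves this by proving the statement for all $\al\in\constr$ with $\tau(\al)<\Om$ and inducting on the lexicographic tuple $(x,\al^*,\al,\ga^*,\ga)$: when $\tau(\check\xi)<\Om$ the new instance has the same $x$ but strictly smaller maximal coefficient ($\check\xi^*<\gt(\xi)=\al^*$), and when $\tau(\check\xi)=\Om$ the index drops from $x$ to $x-1$, which also forces a strengthened statement covering $x=0$ in that situation (and an extension of $\cfs\cdot\cdot$ to ordinals with $\tau=\Om$). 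Your proposed substitute---applying the theorem at $\tau(\check\xi)$ or at $\mc\xi$ and then feeding the result through Lemma~\ref{lemmFundProp}(2) and Lemma~\ref{lemmBoundMC}---does not close this gap: those lemmas compare $\fs\la\theta$ with $\fs\la\eta$ for the \emph{same} base $\la$, whereas here one must compare fundamental-sequence values of the two different bases $\check\xi$ and $\check\zeta$, whose $\Om$-normal forms can be structurally unrelated apart from $\check\zeta<\check\xi$.

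A secondary (smaller) misdiagnosis: in the $\tau(\check\xi)=\Om$ case you propose to show by induction on $i$ that the tower $\gt^{(i)}(\xi)$ stays below $\ga$; this is automatic, since the tower is increasing with limit-like behaviour up to $\cfs\al x$ and $\cfs\al x<\ga$ is the hypothesis. The actual difficulty is getting below $\cfs\ga 1$, and your final step (``$\fs{\check\xi}{\gt^{(x-1)}(\xi)}$ lies below $\fs{\check\eta}{\gt^*(\eta)}$ with controlled maximal coefficient'') is exactly the missing uncountable-level Bachmann inequality described above, so the argument is circular at the crux. The parts of your plan concerning $\al\notin\mathbb X$, $\al\in\mathbb X\setminus\mathbb X'$, $\ga\notin\mathbb X$ (via additive indecomposability, as in Lemma~\ref{lemmBackwardsExt}) and the use of Lemmas~\ref{lemmBackwards} and~\ref{lemmThetaOrd} to reduce everything to an inequality between arguments plus a coefficient bound are sound and agree with the paper; what must be repaired is the induction scheme and the inclusion of the $\al\geq\Om$ case.
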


\begin{proof}
For our induction to go through, we need to prove a slight extension: namely, the theorem should also hold for $x=0$ whenever $\tau(\al)=\gt(\xi)$ with $\tau(\check \xi)=\Om$.
We proceed by induction on $(x,\al^*,\al,\ga^*,\ga)$, ordered lexicographically, to show that under these extended conditions, if $\cfs\al x<\ga<\al $, then $\cfs\al x\leq \cfs\ga 1$.

Clearly $\al\neq 0$ and $\al$ cannot be a successor, while the claim follows trivially if $\ga =\delta+1$ is a successor, as in this case $\nfs \ga 1 = \delta \geq \cfs \al x$.
We may also assume that $\cfs\al x> 0$, for otherwise the claim is trivial.
We consider other cases below.
\begin{Cases}
\item ($\al<\Om$).
Note that we have $\tau(\ga) =\ga <\al <\Om$.
This case is sub-divided as follows.
\begin{Cases}
\item ($\al\notin\mathbb X$).
This is only possible if $\mathbb X=\mathbb P$, in which case $\al = \om^\eta+\xi$ in Cantor normal form with $\xi>0$.
Then,
\[\om^\eta +\cfs \xi x =  \cfs \al x <\ga <   \al  = \om^\eta + \xi .\]
This can only happen if $\ga=\om^\eta+\delta$ with $\cfs \xi x < \delta < \xi$, which by the induction hypothesis yields $\cfs\xi x \leq\cfs\de 1$ and hence $\cfs \al x \leq \cfs\ga 1 $.

\item ($\al\in \mathbb X\setminus \mathbb X'$).
Again, this is only possible if $\mathbb X=\mathbb P$ (since  $\al$ cannot be a successor), and by Proposition \ref{propNotFix}, $\al=\vartheta(\xi)$ with $\xi\in {\rm JUMP}(\mathbb P)\cap \Om$.
In this case, we may assume that $x$ is chosen so that $\cfs \al x <\ga\leq \cfs\al{x+1}$ (otherwise, replace $x$ by a larger value).
Then,
\[\vartheta^*(\xi) \cdot x =\cfs \al x <\ga \leq \cfs\al{x+1} =\vartheta^*(\xi)\cdot(x+1).\]
This implies that $\ga = \vartheta^*(\xi) \cdot x   +\de$ with $0<\de <\vartheta^*(\xi)$.
By definition of $\vartheta^*$ we see that $\vartheta^*(\xi) = \vartheta(\eta)$ for some $\eta$, hence it is additively indecomposable and $  \vartheta^*(\xi) \cdot x +\de$ is in Cantor normal form, which implies that $\cfs \ga 1 =  \vartheta^*(\xi) \cdot x +\cfs\de 1 \geq \vartheta^*(\xi) \cdot x$, as needed.

\item ($\al \in \mathbb X'$).
Then, $\al=\gt(\xi)$ for some $\xi<\ve_{\Om+1}$, and either $\xi\geq \Om$ or $\xi \in {\rm Lim} \setminus {\rm JUMP}(\mathbb X)$.
Let $\tau = \tau(\check \xi)$.
In order to unify some cases below, if $\tau<\Om$, define $\tau_n = \cfs\tau n$ and $\theta =\gt^*(\xi)$.
If $\tau =\Om $, then $\tau_n = \gt^{(n)}(\xi)$ and $\theta=0$.
Then, $\cfs\al x = \gt\big (\fs{\check \xi}{\tau_n}+\theta \big)$.
Consider the following sub-cases.

\begin{Cases}

\item ($\ga\notin\mathbb X$).
This case is only possible if $\mathbb X=\mathbb P$.
Write $\ga=\om^\eta+\de$ with $\de>0$ in Cantor normal form.
Since $\al\in\mathbb P'$, $\cfs\al x \in \mathbb P$, hence $\cfs\al x<\ga$ yields $\cfs\al  x\leq \om^\eta \leq \cfs \ga 1$.

In subsequent cases, we have that $\ga\in\mathbb X$ and can write $\ga=\gt(\zeta)$.

\item ($\ga \in \mathbb X\setminus \mathbb X'$).
Since we are assuming that $\ga$ is not a successor, we must have $\mathbb X=\mathbb P$ and $\cfs \ga n = \vartheta^*( \zeta)\cdot n$ for all $n$.
Since $\cfs\ga n\to \ga$, we can find $n$ so that $\cfs\al x <  \vartheta^*( \zeta) \cdot n$, but $\cfs\al x$ is additively indecomposable so $\cfs\al x\leq \vartheta^*( \zeta) =\cfs \ga 1 $.

\item ($\ga\in \mathbb X'$).
First note that $\zeta<\xi$, for otherwise $\gt(\zeta)<\gt(\xi)$ and Lemma \ref{lemmBackwardsExt} would yield $\ga \leq \cfs\al x $ (this includes the extended case where $x=0$ and $\tau(\check \xi)=\Om$).
We also cannot have $\zeta+1=\xi$, since in this case $\cfs\al x \geq \gt^*(\xi)=\gt(\zeta)$, so $\zeta+1<\xi$.
Similarly, if $\fs{\check \xi}{\tau_x}+\theta >\zeta $ then $\gt \big (\fs{\check \xi}{\tau_x}+\theta  \big ) \leq \cfs \ga 1$, so we may assume that $\fs{\check \xi}{\tau_x}+\theta<\zeta $.

We claim moreover that $ \zeta < \check \xi$.
If this were not the case, we would have $\zeta=\check \xi +\delta$ and $\xi=\check \xi +\beta$ for some $\delta <\beta$, hence $\beta\neq 0$ and $ \gt^*(\xi)\neq 0$.
By definition of $\gt^*$, we have that $\gt^*(\xi) =\gt(\tilde\xi)$ for some $\tilde\xi$ with either $\tilde\xi+1 = \xi$, or $\tilde\xi>\xi$ and $\gt (\tilde \xi) = \be > \zeta^*  $. In either case, $  \gt (\tilde \xi) \geq \gt (\zeta)$, hence $\cfs\al 1 > \gt^*(\xi) =\gt(\tilde \xi) \geq \gt (\zeta)$, contrary to our assumption.

Since $\zeta<\check \xi$, it follows that $\check \zeta<\check \xi$; since $\check \xi\neq 0$, it must be a limit, so we also have that $\zeta+1< \check \xi$.
With this in mind, consider the following cases.

\begin{Cases}

\item ($\check \zeta = \fs{\check\xi} {\tau_x}$).
Then, $\zeta = \fs{\check\xi} {\tau_x} +\delta$ for some $\delta$, and since $\gt(\zeta)>\gt\big ( \fs{\check\xi} {\tau_x} +\theta \big )$, we obtain $\delta>\theta$.
We claim that $\gt^*(\zeta)>\cfs \al x$.
Note that $\zeta\neq \check \zeta$, so $\delta$ is either a successor or of the form $\gt(\zeta')$ for some $\zeta'>\zeta$.
If $\delta =\delta'+1$, we have that $ \gt^*(\zeta) = \gt(\fs{\check\xi} {\tau_x} +\delta')\geq \gt(\fs{\check\xi} {\tau_x} +\theta)$.
If $\delta = \gt(\zeta')$ for some $\zeta'>\zeta$, then also $\zeta'> \fs{ \check\xi} {\tau_x}  +\theta$, and moreover $\delta> \check \zeta^* = \fs{ \check\xi} {\tau_x}^*$ implies that
\[\big (\fs{ \check\xi} {\tau_x}  +\theta \big )^* < \delta = \gt (\zeta'),\]
so that $\gt^*(\zeta) =\gt(\zeta') >  \gt  ( \fs{\check\xi} {\tau_x} +\theta   ) = \cfs \al x$, as claimed.
We conclude that
\[\cfs\ga 1 > \gt^*(\zeta) \geq \cfs\al x. \]

\item ($\check \zeta > \fs{\check\xi} {\tau_x}$).
In this case, by the assumption that $\ga\in\mathbb X'$ we can write $\cfs \ga 1 = \gt(\fs{\check\zeta}\rho+\delta )$ for suitable $\rho,\delta<\Om$ with $\fs{\check\zeta}\rho+\delta <\zeta$.
 
We first claim that $\gt^*(\xi) < \cfs\ga 1$.
If $\gt^*(\xi)=0$ this is obvious, otherwise we show that $\gt^*(\xi) =\gt(\xi')$ for some $\xi'>\zeta$.
This suffices, as $\gt^*(\xi) \leq \cfs\al x <\gt(\zeta)$, so Lemma \ref{lemmBackwardsExt} yields $\gt^*(\xi)  <\cfs \ga 1$.

To find such a $\xi'$, if $\xi$ is a successor, let $\xi' $ be its predecessor.
We have observed that $\zeta+1< \check \xi$, so  $\xi'>\zeta$.
Otherwise, $\xi\in \rm Lim$ and $\gt^*(\xi) \neq 0$.
From $\gt^*(\xi)\neq 0$ we obtain $\gt^*(\xi) = \gt(\xi')$ for some $\xi'>\xi>\zeta$ by definition of $\gt^*(\xi)$, as required.

In case that $\tau  = \Om$ and $x=0$, we already obtain $\cfs \al 0 = \gt^*(\xi) < \cfs \ga 1$, so we may henceforth assume $x>0$.
In other cases, define $\pi$ to be $\tau(\check \xi)$ if $\tau(\check \xi)<\Om$, and $\pi=\al$ otherwise.
We claim that $\zeta< \fs {\check \xi}{\pi}$.
If $\tau(\check \xi)<\Om$, this follows simply because $\check \xi = \fs {\check \xi}{\pi}>\zeta$, and if $\tau(\check \xi)<\Om$, we note that if $\zeta\geq \fs {\check \xi}{\pi}$, then by Lemma \ref{lemmBoundMC}, $\fs {\check \xi}{\pi} < \zeta < \check \xi $ implies that $\zeta^*\geq \pi=\al$, and hence $\gt(\zeta)>\al$, a contradiction.

Since $\check \zeta\leq \zeta$, we thus have that $\fs{\check \xi}{\tau_x } <\check \zeta < \fs {\check \xi}{\pi}$.
It is not hard to check that $\fs{\check \xi}{\tau_x } = \cfs{ \fs {\check \xi}{\pi}} y$ for suitable $y$; namely, $y=x$ if $\pi=\tau<\Om$ (as $\tau_n$ was defined to be $\cfs \tau n$), and $y=x-1$ if $\pi=\al$ and $\tau=\Om$, again by our definition of $\tau_n$.

We wish to use our induction hypothesis to obtain $\fs{\check \xi}{\tau_x } = \cfs {\fs{\check \xi}{\pi}} y \leq \nfs{{\check\zeta}} 1$ from  $\cfs{ \fs {\check \xi}{\pi}} y <\check \zeta < \fs {\check \xi}{\pi}$.
Let us check that indeed the hypothesis is available according to our lexicographic ordering of the variables.
If $\tau<\Om$, $\pi=\tau<\alpha  $, and since $\al=\al^* > \fs{\check \xi}{\tau }^*$, we apply the induction hypothesis to the second variable in our lexicographic ordering.
If $\tau=\Om$, then $y<x$ and we apply induction to the first variable in the ordering (note that here we may apply the extended case for $y=0$).
Thus we indeed have that $\fs{\check \xi}{\tau_x }   \leq \nfs\zeta 1$.

Next we check that $\cfs{\check\zeta} 1 \leq \fs{\check \zeta} \rho $.
If $ \tau(\check \zeta) =\Om$, then we have defined $\cfs{\check\zeta} n = \fs{\check\zeta} 0 $ for all $n$, so this is clear.
Otherwise, we have from the definition of the fundamental sequences that $\cfs\gamma 1 = \gt(\cfs {\check \zeta} 1 + \gt^*(\zeta))$, so that $\cfs{\check\zeta} 1 = \fs{\check\zeta} {\rho}$.
We thus obtain
\[\fs{\check \xi}{\tau_x } \leq \cfs{\check\zeta} 1 \leq \fs{\check \zeta} \rho  <\fs{\check \xi}\pi,\]
which yields $\fs{\check \xi}{\tau_x }^* \leq \fs{\check\zeta} \rho ^*$ by Lemma \ref{lemmBoundMC}.
Moreover, we have proven that $\gt^*(\xi) < \cfs\ga 1 = \gt(\fs{\check \zeta}\rho+\delta)$, which since $\theta\leq \gt^*(\xi)  $ gives us $\big (\fs{\check \xi}{\tau_x}+\theta \big)^* < \gt(\fs{\check \zeta}\rho+\delta)$.
It remains to prove that $ \fs{\check \xi}{\tau_x}+\theta  \leq   \fs{\check \zeta}\rho+\delta $ to obtain $\gt \big (\fs{\check \xi}{\tau_x}+\theta \big ) \leq \gt \big (\fs{\check \zeta}\rho+\delta\big )$.

If $\theta=0$ or $ \fs{\check \xi}{\tau_x} <  \fs{\check \zeta}\rho$, this is immediate.
Otherwise, $\fs{\check \xi}{\tau_x} ^* < \theta = \gt^*(\xi) =\gt(\xi')$ for some $\xi'>\zeta$, but we have seen that $\gt^*(\xi) <\cfs \ga 1 =  \gt \big (\fs{\check \zeta}\rho+\delta\big ) $.
This can only occur if $ \big (\fs{\check \zeta}\rho+\delta\big )^* \geq \gt(\xi')$; but $\fs{\check \zeta}\rho^*= \fs{\check \xi}{\tau_x} ^* < \gt(\xi')$, so we conclude that $\delta\geq \gt(\xi')= \theta$, yielding the desired result.
\end{Cases}

\end{Cases}

\end{Cases}

\item ($\al>\Om$).
Write $\al = \Om^\eta \be +\la$ in $\Om$-normal and consider several sub-cases.
\begin{Cases}
\item ($\la>0$).
Then $\cfs\al x= \Om^\eta\be + \cfs \la x$, and $\cfs \al x <\ga\leq \al = \Om^\eta \be+\la $ yields $\ga = \Om^\eta \be +\zeta$ for some $\zeta$ with $\cfs \la x <\zeta\leq \la $.
The induction hypothesis implies that $\cfs \la x \leq \nfs\zeta 1 $, hence
\[\cfs \al x = \Om^\eta\be+\cfs \la x \leq \Om^\eta\be+\nfs \zeta 1 =\nfs \ga 1. \]

\item ($\la=0$).
We consider sub-cases according to the shape of $\be$.

\begin{Cases}
\item ($\be=\xi+1$).
Since we are assuming that $\tau(\al)<\Om$, we must have that $\eta \in \rm Lim$.
Then, $\cfs \al x= \Om^{\eta}\cdot \xi + \Om^{\cfs \eta x} < \ga< \al =\Om^{\eta} \cdot (\xi+1) $.
Therefore, we can write
\[\ga=\Om^{\eta}\cdot \xi +\Om^{\chi}\rho + \delta \]
in $\Om$-normal form with $\Om^{\chi}\rho + \delta$ also in $\Om$-normal form, where $\rho>0$, $  \cfs\eta x\leq \chi <\eta$, and either $  \cfs\eta x < \chi$, $\rho>1$, or $\de>0$.
Consider the following sub-cases.
\begin{Cases}
\item ($\de>0$).
Then,  $\nfs\ga 1 =  \Om^{\eta}\cdot \xi +\Om^{\chi}\rho +\cfs\de 1 \geq \cfs \al x$, as required.\footnote{In this case, it is possible to have $\tau(\delta)=\Om$, and we recall that $\nfs \de 1 := \fs\de 0$.
Note that the proof itself is unaffected, as we only need $\nfs \de 1\geq 0$, which always holds. A similar comment applies to some of the subsequent cases.}

\item ($\rho > 1$).
In this case, $\nfs\ga 1 =  \Om^{\eta}\cdot \xi +\Om^{\chi}\cfs \rho 1  \geq \cfs \al x$, where the inequality follows from $\cfs \rho 1 = \rho-1\geq 1$ when $\rho$ is a successor and from $\cfs \rho 1 > \cfs \rho 0 \geq 0  $ when $\rho$ is a limit.

\item 
($  \cfs\eta x < \chi$, $\rho=1$ and $\de=0$).
In this case, $\nfs \ga 1 = \Om^{\eta}\cdot \xi +\Om^{\nfs \chi 1} $.
Since $  \cfs\eta x < \chi <\eta $, the induction hypothesis yields $\cfs \eta x\leq \nfs\chi 1$, which implies that $\cfs \al x\leq \nfs \ga 1$.

\end{Cases}

\item
($\be\in \rm Lim$).
In this case,
\[\cfs \al x = \Om^{\eta}\cdot (\cfs\be x)<\ga< \al =\Om^{\eta}\cdot \be.\]
Write $\ga= \Om^\chi\rho+\delta$ in $\Om$-normal form.
Then, $\Om^{\eta}\cdot (\cfs\be x) < \ga < \Om^{\eta}\cdot \be $ yields $\chi = \eta$ and $\rho\geq \cfs \be x$.
Consider the following sub-cases.
\begin{Cases}
\item ($\de>0$).
In this case, $\Om^{\eta}\cdot (\cfs\be x)  \leq \Om^{\eta}\cdot \rho + \nfs \de 1 = \nfs\ga 1$.

\item ($\de=0$).
Then,  $\Om^{\eta}\cdot (\cfs\be x) < \Om^\eta \rho < \Om^{\eta}\cdot \beta$ implies that $\cfs \be x<\rho\leq  \be$, so that by the induction hypothesis $\cfs\be x \leq \cfs \rho 1$, which implies that $\Om^{\eta}\cdot (\cfs\be x) \leq  \Om^\eta \cfs\rho 1 \leq \cfs \ga 1$, as required.
\end{Cases}
\end{Cases}
\end{Cases}
\end{Cases}
This covers all cases and concludes the proof.
\end{proof}

    \section{Notation systems and norms}\label{secNorms}
    
    The $\gt_\mathbb X$ functions are meant to play a role in ordinal notation systems among other functions which represent the `predicative' part of the system.
Given a set of functions $\mathcal F$ such that each $f\in \mathcal F$ is of the form $f\colon {\sf Ord}^n\to{\sf Ord}$ for some $n$, an {\em $\mathcal F$-term} is a formal expression built inductively from the elements of $\mathcal F$ along with some set of variables: formally, we fix a countable set $V$ of variables and each $x \in V$ is an $\mathcal F$-term, as is the formal expression $f(t_1,\ldots,t_n)$ if $f\in\mathcal F$ and each $t_i$ is an $\mathcal F$-term.
We write $t(\vec x)$ to indicate that $\vec x$ is the tuple of variables appearing in $t$, and if $\vec \xi$ is a tuple of ordinals of the same arity then $|t(\vec\xi)|$ (or simply $t(\vec\xi)$ if this does not lead to confusion) is the value obtained by evaluating $t$ with each variable $x_i$ interpreted as $\xi_i$.

\begin{definition}
A family of functions $\mathcal F$ is {\em $\mathbb X$-complete} if for every ordinal $\zeta $ there is a term $t(\vec x)$ and a tuple $\vec\xi$ of elements of $\mathbb X$ such that $\zeta=|t(\vec \xi)| $.
\end{definition}

As an example, $\mathcal F=\{0 \}$ (with $0$ being regarded as a function of arity zero) is complete for $1+{\sf Ord} $, while $\{0,+\}$ is complete for $\mathbb P$.
Similarly, $\{0,+,\om^x\}$ is complete for the class of $\ve$-numbers, etc.

Given a family of $\mathbb X$-complete functions $\mathcal F$, $\mathcal F^\Om_\mathbb X$ is the family obtained by adding $\gt_\mathbb X$ to $\mathcal F$ as well as the function $\Om^xy+z$.
When $\mathbb X$ is clear, we may write simply $\mathcal F^\Om$. 

\begin{theorem}\label{theoNotation}
If $\mathcal F$ is $\mathbb X$-complete, then for every $\xi<\gt_\mathbb X(\ve_{\Om+1})$, there is a closed $\mathcal F^\Om$-term $t$ such that $\xi = |t|$.
\end{theorem}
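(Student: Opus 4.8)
The plan is to prove Theorem~\ref{theoNotation} by induction on $\xi<\gt_\mathbb X(\ve_{\Om+1})$, leveraging the fact that $\gt_\mathbb X\colon\constr\to\gt_\mathbb X(\ve_{\Om+1})$ is a bijection (as noted after Corollary~\ref{corOmTower}) together with the $\mathbb X$-completeness of $\mathcal F$. More precisely, I would strengthen the statement to a joint induction: for every $\xi<\ve_{\Om+1}$ there is a closed $\mathcal F^\Om$-term $t$ with $|t|=\xi$, and simultaneously, for every $\zeta<\gt_\mathbb X(\ve_{\Om+1})$ there is a closed $\mathcal F^\Om$-term with value $\zeta$. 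The two halves feed each other: countable ordinals are reached through $\gt_\mathbb X$ applied to terms for smaller ordinals in $\constr$, while the $\Om$-normal-form decomposition reduces an arbitrary $\xi<\ve_{\Om+1}$ to countable data plus strictly smaller exponents.

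The key steps, in order, would be as follows. First, handle $\zeta<\Om$: by Corollary~\ref{corOmTower}, if $\zeta\in\gt_\mathbb X(\ve_{\Om+1})\cap\mathbb X$ then $\zeta=\gt_\mathbb X(\eta)$ for some $\eta<\ve_{\Om+1}$, and since $\eta^*<\zeta\leq\gt_\mathbb X(\ve_{\Om+1})$ we have $\eta\in\constr$; moreover $\eta<\ve_{\Om+1}$ so by the induction hypothesis (on the $\ve_{\Om+1}$-half, applied to the strictly smaller ordinal $\eta$—one checks $\eta<\zeta$ is not automatic, but $\gt_\mathbb X$ is injective and the recursion is well-founded because $\eta$ was obtained as an argument) there is a term $s$ with $|s|=\eta$, whence $\gt_\mathbb X(s)$ is the desired term. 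For $\zeta\in\gt_\mathbb X(\ve_{\Om+1})\setminus\mathbb X$, use $\mathbb X$-completeness of $\mathcal F$ directly: there is an $\mathcal F$-term $t(\vec x)$ and a tuple $\vec\xi$ from $\mathbb X$ with $\zeta=t(\vec\xi)$; each $\xi_i\in\mathbb X$ with $\xi_i\leq\zeta<\gt_\mathbb X(\ve_{\Om+1})$, hence $\xi_i^*\leq\xi_i<\gt_\mathbb X(\ve_{\Om+1})$ so $\xi_i\in\constr\cap\mathbb X$ and by the already-treated case $\xi_i$ is $\gt_\mathbb X(\cdot)$ of a smaller term; substituting yields a closed $\mathcal F^\Om$-term for $\zeta$. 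Second, for $\Om\leq\xi<\ve_{\Om+1}$, write $\xi=\Om^\al\be+\ga$ in $\Om$-normal form, so that $\al<\xi$, $\be<\Om$, $\ga<\Om^\al\leq\xi$; by the induction hypothesis we get terms for $\al$, $\ga$ (both strictly smaller) and for the countable ordinal $\be$ (handled in the first step), and then the function $\Om^xy+z\in\mathcal F^\Om$ assembles these into a term for $\xi$.

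The main obstacle I anticipate is making the induction well-founded and correctly interleaved. The difficulty is that in the countable case, $\zeta=\gt_\mathbb X(\eta)$ with $\eta$ potentially far larger than $\zeta$ (indeed typically $\eta\geq\Om$), so one cannot literally induct on the ordinal being represented; one must instead induct along the relation ``$\eta$ is the $\gt_\mathbb X$-preimage of $\zeta$, and $\zeta$ was produced as a coefficient-bound during the $\Om$-normal-form analysis of $\eta'$.'' The clean way to manage this is to do the induction on $\xi<\ve_{\Om+1}$ for the ``every ordinal below $\ve_{\Om+1}$ is a closed $\mathcal F^\Om$-term'' statement, where the relevant order is just $<$ on $\ve_{\Om+1}$, and to \emph{derive} the countable statement as the special case; the subtlety is verifying that whenever we invoke $\gt_\mathbb X(\eta)$ for a countable target $\zeta$, the preimage $\eta$ is strictly below whatever ordinal we are currently analyzing, which holds because $\eta$ appears as a proper subterm/argument in the $\Om$-normal form decomposition and $\Om$-normal forms strictly decrease the ordinal at each recursive step (as recorded in Section~\ref{secOrd}: if $\xi=\Om^\al\be+\ga<\ve_{\Om+1}$ in $\Om$-normal form then $\al<\xi$). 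Handling the base case $\zeta=\min\mathbb X=\gt_\mathbb X(0)$ and the membership bookkeeping $\xi_i\in\constr$ are routine once the recursion scheme is pinned down.
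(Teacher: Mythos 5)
There is a genuine gap, and it sits exactly where you flagged the ``main obstacle'': the induction you describe is not well-founded, and the strengthening you propose is false. The claim ``for every $\xi<\ve_{\Om+1}$ there is a closed $\mathcal F^\Om$-term with value $\xi$'' cannot hold for cardinality reasons: there are only countably many closed terms but uncountably many ordinals below $\Om$, let alone below $\ve_{\Om+1}$ (at best one can assert it for $\xi\in\constr$, which is essentially the theorem itself). More importantly, your justification of well-foundedness --- that the $\gt_\mathbb X$-preimage $\eta$ of a countable coefficient ``is strictly below whatever ordinal we are currently analyzing'' because it ``appears as a proper subterm/argument in the $\Om$-normal form decomposition'' --- is incorrect. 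Only the coefficient $\zeta=\gt_\mathbb X(\eta)$ appears in the $\Om$-normal form; its preimage $\eta$ can be vastly larger than the ordinal under analysis. For instance, $\zeta=\gt_\mathbb X(\Om_5)$ can occur as a coefficient of $\xi=\Om\cdot\zeta<\Om^2<\Om_5$, so the recursive call from $\xi$ to $\eta=\Om_5$ increases the measure. The fact that exponents strictly decrease ($\al<\xi$ when $\xi=\Om^\al\be+\ga$) does nothing for the coefficients, which is precisely where the blow-up happens.

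The repair is the observation you already wrote down but did not exploit: $\eta^*<\zeta$, i.e.\ $\zeta^*<\gt_\mathbb X(\zeta)$ by the very definition of $\gt_\mathbb X$. This lets you induct on the \emph{countable value} $\xi<\gt_\mathbb X(\ve_{\Om+1})$ directly, which is the paper's route: given $\xi$, reduce via $\mathbb X$-completeness of $\mathcal F$ (if $\xi\notin\mathbb X$) to finitely many $\xi_i\in\mathbb X$ with $\xi_i\le\xi$; write each such $\xi_i=\gt_\mathbb X(\zeta_i)$ by Corollary \ref{corOmTower}; then $\zeta_i^*<\xi_i\le\xi$, so every coefficient of $\zeta_i$ is a countable ordinal strictly below $\xi$, to which the induction hypothesis applies. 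Assemble a term for $\zeta_i$ from the terms for its coefficients using $\Om^xy+z$, apply $\gt_\mathbb X$, and substitute into $t(\vec x)$. No statement quantified over all of $\ve_{\Om+1}$ is ever needed, and the recursion decreases the countable value at every step. Your treatment of the decomposable case via $\mathbb X$-completeness is fine (indeed more explicit than the paper's), but as written the induction scheme carrying it does not terminate.
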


\begin{proof}
Proceed by induction on $\xi$.
By Corollary \ref{corOmTower}, there is $\zeta<\ve_{\Om+1}$ with $\xi=\gt_\mathbb X(\zeta)$.
Since $\zeta^*<\xi$, we can use the induction hypothesis to find a term $t_\al$ for each coefficient $\al$ occurring in $\zeta$, hence a term $t_\zeta$ with $\zeta=|t_\zeta|$.
We then have that $\gt_\mathbb X(t_\zeta)$ is the desired term for $\xi$.
\end{proof}

\begin{corollary}\label{corNotation}
Every ordinal $\xi<\vartheta(\ve_{\Om+1})$ can be written in terms of $0,\sigma$, and $\Om^xy+z$, or in terms of $0,+,\vartheta $, and $\Om^xy+z$
\end{corollary}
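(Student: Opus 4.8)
The plan is to derive Corollary \ref{corNotation} as a direct instantiation of Theorem \ref{theoNotation} for the two specific clubs $\mathbb X$ under consideration. First I would recall that for $\mathbb X = 1+{\sf Ord}$ the family $\mathcal F = \{0\}$ is $\mathbb X$-complete, since every ordinal $\zeta$ is trivially $1+\zeta$ for $\zeta\in 1+{\sf Ord}$ when we treat $0$ as an arity-zero function and observe that $1+{\sf Ord} = {\sf Ord}\setminus\{0\}$ together with the fact that every nonzero ordinal is of the form $1+\xi$; more carefully, $\mathbb X$-completeness only requires that each $\zeta$ be the value of a term on a tuple of elements of $\mathbb X$, and the constant $0$ is not in $\mathbb X$, so one should instead note that $\mathcal F=\{0\}$ is complete in the required sense because $0 = |0|$ with the empty tuple, and every positive ordinal lies in $1+{\sf Ord}$ directly (the variable-term $x$ evaluated at $\xi\in\mathbb X$ gives $\xi$). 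Then $\mathcal F^\Om_\mathbb X$ adds $\gt_{1+{\sf Ord}} = \nt$ and $\Om^xy+z$, and Theorem \ref{theoNotation} gives that every $\xi<\nt(\ve_{\Om+1})$ is $|t|$ for a closed $\mathcal F^\Om$-term, i.e.\ a term built from $0$, $\nt$, and $\Om^xy+z$. Since $\nt(\ve_{\Om+1}) = \vartheta(\ve_{\Om+1})$ — which follows from Theorem \ref{theoLeqTV}(1), as $\tv\cdot$ maps $\vartheta(\ve_{\Om+1})$ onto itself and $\nt(\ve_{\Om+1}) = \sup_n\nt(\Om_n) = \sup_n\tv{\nt(\Om_n)}$ together with the analogous identity for $\vartheta$ — this yields the first alternative in the statement.

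Next I would do the same for $\mathbb X = \mathbb P$. Here $\mathcal F = \{0,+\}$ is $\mathbb P$-complete: every ordinal is a finite sum of principal numbers (its Cantor normal form), and each summand lies in $\mathbb P$. Then $\mathcal F^\Om_{\mathbb P}$ adds $\gt_{\mathbb P} = \vartheta$ and $\Om^xy+z$, so Theorem \ref{theoNotation} gives that every $\xi<\vartheta(\ve_{\Om+1})$ is the value of a closed term built from $0$, $+$, $\vartheta$, and $\Om^xy+z$. This is exactly the second alternative.

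The two halves together are precisely the statement of the corollary, so I would simply write: ``Apply Theorem \ref{theoNotation} with $\mathbb X = 1+{\sf Ord}$ and $\mathcal F = \{0\}$, respectively with $\mathbb X = \mathbb P$ and $\mathcal F = \{0,+\}$, noting that these families are $\mathbb X$-complete and that $\nt = \gt_{1+{\sf Ord}}$, $\vartheta = \gt_{\mathbb P}$; for the first case use Theorem \ref{theoLeqTV} to identify $\nt(\ve_{\Om+1})$ with $\vartheta(\ve_{\Om+1})$.'' I do not expect any serious obstacle; the only point requiring a moment's care is confirming $\mathbb X$-completeness of $\{0\}$ for $1+{\sf Ord}$ and of $\{0,+\}$ for $\mathbb P$ against the precise definition (the tuple $\vec\xi$ must consist of elements of $\mathbb X$, and one uses that the bare variable $x$ is itself a term), and verifying that the equality $\nt(\ve_{\Om+1})=\vartheta(\ve_{\Om+1})$ is genuinely available from the earlier results — which it is, since $\tv\cdot$ restricted to $\vartheta(\ve_{\Om+1})$ is the identity and is built to match up the $\Om_n$-towers for the two functions.
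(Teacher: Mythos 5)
Your proposal is correct and matches the paper's intended argument: the corollary is exactly Theorem \ref{theoNotation} instantiated with $\mathbb X=1+{\sf Ord}$, $\mathcal F=\{0\}$ and with $\mathbb X=\mathbb P$, $\mathcal F=\{0,+\}$, whose $\mathbb X$-completeness is noted in the text. Your extra check that $\nt(\ve_{\Om+1})=\vartheta(\ve_{\Om+1})$ (via Lemma \ref{lemmLeqTV} and Theorem \ref{theoLeqTV}) is a point the paper leaves implicit, and you resolve it correctly.
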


Notation systems may be used to naturally assign norms to ordinals.

\begin{definition}
If $\mathcal F$ is a family of functions, we define the norm $\|t\|_\mathcal F$ of an $\mathcal F$-term $t$ inductively by setting $\|f(t_1,\ldots,t_n)\|_\mathcal F = 1+  \|t_1\|_\mathcal F +\ldots +\|t_1\|_\mathcal F$.
Similarly, we define the norm of an ordinal $\al$ by letting $ \|\al\|_\mathcal F$ be the least value of $\|t\|_\mathcal F$ such that $|t|=\al$, and $ \|\al\|_\mathcal F =\infty$ if there is no such value.
\end{definition}

We often write $\|\cdot\|$ instead of $\|\cdot \|_\mathcal F$ when $\mathcal F$ is clear from context, and write $\|\cdot \|_\nt$, $\|\cdot \|_\vartheta$ when $\mathcal F$ is either $\{0\}^\Om_{1+\sf Ord}$ or $\{0,+\}^\Om_{\mathbb P}$, respectively.
The following is easy to check and we will need it to show that $\big(\vartheta(\ve_{\Om+1}) ,\cfs\cdot\cdot_\vartheta \big )$ is a regular Cantorian Bachmann system.

\begin{lemma}\label{lemCantorTwo}
Let $\vartheta(\ve_{\Om+1}) > \al_1\geq\ldots\geq \al_n $ and define
\[a = \max\{n,\|\al_1\|_\vartheta,\ldots, \|\al_n\|_\vartheta\}.\]
Then,
$ a \leq  \| \om^{\al_1} +\ldots +\om^{\al_n}\|_\vartheta = O (a^2)$.
\end{lemma}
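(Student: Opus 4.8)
The plan is to establish the two-sided estimate separately, with the lower bound essentially trivial and the upper bound by induction on the term structure. For the lower bound $a \leq \|\om^{\al_1}+\ldots+\om^{\al_n}\|_\vartheta$: any closed $\{0,+\}^\Om_{\mathbb P}$-term $t$ evaluating to $\om^{\al_1}+\ldots+\om^{\al_n}$ must, by inspection of how the admissible functions $0$, $+$, $\Om^xy+z$, $\vartheta$ interact with Cantor normal form, contain a subterm evaluating to each $\om^{\al_i}$, hence a subterm evaluating to each $\al_i$; thus $\|t\|_\vartheta \geq \|\al_i\|_\vartheta$ for each $i$, and also $\|t\|_\vartheta \geq n$ since one needs at least $n-1$ applications of $+$ (plus the summands) to build an $n$-term sum. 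Taking the least such $t$ gives $\|\om^{\al_1}+\ldots+\om^{\al_n}\|_\vartheta \geq a$.

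\textbf{The upper bound} is the substantive part. First I would handle a single power: I claim $\|\om^\al\|_\vartheta = O(\|\al\|_\vartheta)$, in fact $\|\om^\al\|_\vartheta \leq \|\al\|_\vartheta + c$ for a small constant. The point is that $\om^\al$ need not be built using an exponentiation function (which is not in our signature), but we can express it via $\vartheta$ and $\Om^xy+z$: indeed $\om^\al = \vartheta(\Om \cdot \tilde\alpha + \ldots)$-type identities, or more directly one uses the translation $\tv\cdot$ and Corollary~\ref{corNotation}; since every $\xi < \vartheta(\ve_{\Om+1})$ is already a value of a term in $0,+,\vartheta,\Om^xy+z$, and $\om^\al < \vartheta(\ve_{\Om+1})$ whenever $\al < \vartheta(\ve_{\Om+1})$, we get a term for $\om^\al$ from the proof of Theorem~\ref{theoNotation}; chasing that proof shows the term size is bounded by $\|\al\|_\vartheta$ plus a bounded overhead, since $\om^\al$ is either additively decomposed (reducing to smaller powers) or equals $\vartheta(\zeta)$ with $\zeta^* $ built from coefficients of $\al$. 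Then $\|\om^{\al_1}+\ldots+\om^{\al_n}\|_\vartheta \leq (n-1) + \sum_i \|\om^{\al_i}\|_\vartheta \leq (n-1) + \sum_i(\|\al_i\|_\vartheta + c) \leq (n-1) + n(a+c) = O(a^2)$, using $n \leq a$ and $\|\al_i\|_\vartheta \leq a$.

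\textbf{The main obstacle} I anticipate is the claim that $\|\om^\al\|_\vartheta = O(\|\al\|_\vartheta)$, since the signature lacks an exponentiation operation; one must argue that $\om^\al$ always has a compact representation using $\vartheta$ and the auxiliary function $\Om^xy+z$. Concretely, if $\al = \om^{\ga_1}+\ldots+\om^{\ga_k}$ in Cantor normal form with $\ga_1 \geq \ldots \geq \ga_k$, then $\om^\al$ is additively indecomposable, and one should show it lies in the range of $\vartheta$ restricted to arguments whose coefficients are among those needed for the $\ga_j$; alternatively, proceed by a secondary induction establishing that the term built in Theorem~\ref{theoNotation} for $\om^\al$ has size linear in $\|\al\|_\vartheta$. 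Once this linear-size lemma for single powers is in hand, assembling the sum and reading off $O(a^2)$ is routine arithmetic: the quadratic comes precisely from summing $n \leq a$ terms each of norm $\leq a$ plus $n-1 \leq a$ copies of the $+$ symbol.
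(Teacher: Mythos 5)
Your overall architecture (lower bound from subterm considerations, upper bound from a linear estimate $\|\omega^{\alpha}\|_\vartheta=O(\|\alpha\|_\vartheta)$ plus $n-1$ additions, giving the quadratic) is the natural one, but both halves have a genuine gap at exactly the point where the work lies, and the gap is the same in both cases: the relation between $\alpha$ and the $\vartheta$-preimage of $\omega^{\alpha}$. For the lower bound, your inference ``a subterm evaluating to $\omega^{\alpha_i}$, hence a subterm evaluating to $\alpha_i$'' is false. Since $\omega^{\alpha_i}$ is additively indecomposable, the relevant subterm is (up to degenerate additions of $0$) of the form $\vartheta(s)$, where $s$ denotes the unique $\zeta$ with $\vartheta(\zeta)=\omega^{\alpha_i}$ --- and $\zeta$ need not be $\alpha_i$. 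Computing $\vartheta$ on countable arguments, one finds $\vartheta(\beta)=\omega^{\beta}$ except when $\beta$ lies within $\omega$ of an $\varepsilon$-number, where the values shift by one; e.g.\ $\omega^{\varepsilon_0+1}=\vartheta(\varepsilon_0)=\vartheta(\vartheta(\Omega))$, whose term contains no subterm denoting $\varepsilon_0+1$ and is in fact \emph{shorter} than any term for $\varepsilon_0+1$ under the paper's norm. So the inequality $\|\omega^{\alpha_i}\|_\vartheta\geq\|\alpha_i\|_\vartheta$ cannot be obtained by your subterm argument; the cases where $\alpha_i$ sits just above an $\varepsilon$-number must be confronted explicitly (and they show the estimate is delicate at the level of additive constants, so any correct proof has to track exactly how preimages relate to exponents rather than argue ``by inspection'').

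For the upper bound, the claim $\|\omega^{\alpha}\|_\vartheta\leq\|\alpha\|_\vartheta+c$ is precisely what you flag as the main obstacle, and your proposed justification --- ``chasing the proof of Theorem~\ref{theoNotation}'' --- does not deliver it: that proof produces \emph{some} term for each ordinal by passing to coefficients of an arbitrary preimage, with no accounting of term size, and nothing in it relates the term it builds for $\omega^{\alpha}$ to a minimal term for $\alpha$. What is actually needed is the explicit description of $\vartheta$ on countable arguments just mentioned: $\omega^{\alpha}=\vartheta(\alpha)$ in the generic case, $\omega^{\alpha}=\alpha$ when $\alpha$ is an $\varepsilon$-number, and $\omega^{\varepsilon+n}=\vartheta(\varepsilon+(n-1))$ for $\varepsilon$-numbers $\varepsilon$ and $0<n<\omega$, together with an argument that in the exceptional cases a term for the shifted argument has norm bounded by that of $\alpha$ plus a constant. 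With that lemma in hand your assembly of the sum and the $O(a^2)$ count is fine; without it, both inequalities of the statement remain unproved. (For reference, the paper itself offers no proof --- it asserts the lemma is ``easy to check'' --- so the case analysis around $\varepsilon$-numbers is exactly what a complete write-up would have to supply.)
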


Finally, we check that our fundamental sequences are a normed system, in the following sense.

\begin{theorem}\label{theoReg}
Let $\mathbb X$ be either $1+\sf Ord$ or $\mathbb P$ and $\mathcal F $ be either $ \{0 \}$ or $ \{0,+\}$, respectively.
Let $\|\cdot\| = \|\cdot\|_{\mathcal F^\Om}$.
Then, if $\zeta<\xi \in \constr$, $\xi$ is a limit with $\tau(\xi)<\Om$, and $\|\zeta\| \leq n   $, it follows that $\zeta \leq \cfs\xi{n }$.
\end{theorem}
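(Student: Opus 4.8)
The plan is to prove the statement by induction on $\xi$, following the case structure dictated by the normal form of $\xi$ and mirroring the proof of Theorem~\ref{theoBach}. The theorem we want says: if $\|\zeta\|\le n$ and $\zeta<\xi$ with $\xi$ a limit and $\tau(\xi)<\Om$, then $\zeta\le\cfs\xi n$. First I would handle the easy structural cases. If $\xi>\Om$, write $\xi=\Om^\eta\be+\la$ in $\Om$-normal form; if $\la>0$ then $\cfs\xi n=\Om^\eta\be+\cfs\la n$ and $\zeta<\xi$ splits as $\zeta=\Om^\eta\be+\zeta'$ with $\zeta'<\la$ (using additive decomposition and the fact that the norm of $\zeta'$ is no larger than $\|\zeta\|$, since $\Om^xy+z$ is available), so we apply the induction hypothesis to $\la$. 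If $\la=0$, the analysis splits on whether $\be$ is a successor or a limit, exactly as in Theorem~\ref{theoBach}'s corresponding cases, using Lemma~\ref{lemmBoundMC} and monotonicity (Lemma~\ref{lemmFundProp}) to locate $\zeta$ below the appropriate fundamental-sequence value.

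The substantive case is $\xi<\Om$, so $\xi\in\mathbb X'$ by Proposition~\ref{propNotFix} (it cannot be a successor or in $\rm JUMP$), and we write $\xi=\gt(\eta)$. Here the key tool is Lemma~\ref{lemmBackwardsExt}: given $\zeta<\gt(\eta)$, I want to show $\zeta\le\cfs{\gt(\eta)}n$. If $\zeta=\gt(\omega^{\gamma_1}+\cdots)$ or a sum of such, I first reduce to the principal (additively indecomposable) case since $\cfs\xi n\in\mathbb X'$ is itself additively indecomposable and $\zeta<\cfs\xi n$ would follow from the leading summand being $\le\cfs\xi n$ (using that $\mathcal F$ is $\{0\}$ or $\{0,+\}$). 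Then write $\zeta=\gt(\zeta')$ for some $\zeta'$. The crucial sub-case is when $\zeta'\ge\eta$: then $\gt(\zeta')\ge\gt(\eta)=\xi$ would contradict $\zeta<\xi$ unless $\zeta'^*<\gt(\eta)$, i.e. $\zeta'^*<\xi$; but by Lemma~\ref{lemmThetaOrd}, $\gt(\zeta')<\gt(\eta)$ forces $\zeta'^*<\gt(\eta)$, and since $\eta=\gt^{-1}$-stuff this is where we invoke that $\|\zeta\|\le n$ to bound the relevant coefficient — the norm controls how large $\zeta'^*$ can be, hence via the induction hypothesis applied to $\eta^*<\xi$ (strictly smaller ordinal) we get $\zeta'^*\le\cfs{\eta^*}n$, and then Lemma~\ref{lemmBackwardsExt} (whose hypothesis "$\xi\ne\gt(\chi)$ for $\chi>\zeta'$" must be checked, using that $\eta$ is not of FIX type or arranging $\zeta'<\eta$) delivers $\gt(\zeta')\le\cfs\xi n$.

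The main obstacle I expect is the careful bookkeeping linking the \emph{norm} $\|\zeta\|$ to the \emph{coefficient} $\mc\zeta$ (or $\zeta'^*$) in a way compatible with the induction. Specifically, one needs a lemma (perhaps implicit or to be stated) saying that if $\|\zeta\|\le n$ then every coefficient $\theta\in\coeffs\zeta$ satisfies $\|\theta\|<n$, so that the induction hypothesis applies at the smaller ordinal $\theta$ and smaller norm bound. Combined with Lemma~\ref{lemmBackwardsExt}'s "skip-over" property, this should close the gap: the norm bound on $\zeta$ guarantees its top coefficient is small enough to be majorized by $\cfs{\xi^*}n$, and then the recursive structure of Buchholz systems pushes this up to $\cfs\xi n\ge\zeta$. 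The rest is routine case-checking parallel to Theorems~\ref{theoBach} and~\ref{theoLeqTV}, distinguishing $\mathbb X=1+{\sf Ord}$ from $\mathbb X=\mathbb P$ only where the Cantor-normal-form clause for $\vartheta$ intervenes.
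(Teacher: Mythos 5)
Your plan has the right general flavor (case split on the normal form of $\xi$, Lemma \ref{lemmBackwardsExt} for the ``skip-over'' situations, norms controlling coefficients), but the inductive engine you specify does not support the steps that carry the real weight. You propose a plain induction on $\xi$. In the central case $\xi=\gt(\eta)\in\mathbb X'$ with $\zeta=\gt(\zeta')$ and $\zeta'<\eta$, the argument needs two kinds of appeals that plain induction on $\xi$ cannot provide: (i) bounds at the \emph{same} $\xi$ with a smaller norm, namely $\zeta'^*\leq\cfs\xi{n-1}$ (and $\zeta'^*\leq\cfs\xi{n-2}$ when $\zeta'\geq\Om$), which then combine with Lemma \ref{lemmBoundMC} to give $\zeta'<\fs{\check\eta}{\cfs\xi{n-1}}$ in the case $\tau(\check\eta)=\Om$, where $\cfs\xi n=\gt^{(n)}(\eta)$; and (ii) an appeal at $\check\eta$ (to get $\zeta'<\cfs{\check\eta}n$ when $\tau(\check\eta)<\Om$), where $\check\eta$ is typically $\geq\Om>\xi$, i.e.\ a \emph{larger} ordinal than $\xi$. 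The paper resolves this by inducting on the lexicographic tuple $(\xi^*,\xi,\zeta^*,\zeta)$ (note $\check\eta^*\leq\eta^*<\gt(\eta)=\xi^*$), in the same spirit as Theorem \ref{theoBach}; your plan never sets up such a measure. Moreover, the specific induction step you do describe --- applying the hypothesis ``to $\eta^*<\xi$'' to conclude $\zeta'^*\leq\cfs{\eta^*}n$ --- is neither available (there is no reason that $\zeta'^*<\eta^*$, nor that $\eta^*$ is a limit with $\tau(\eta^*)<\Om$) nor the bound the proof needs. Finally, the sub-case you single out as crucial, $\zeta'\geq\eta$, is actually the easy one: it follows from Lemma \ref{lemmBackwardsExt} alone, since $\gt$ is injective on $\constr$ so no $\chi>\zeta'$ has $\gt(\chi)=\xi$, and no norm is required. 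The genuinely hard territory is $\zeta'<\eta$, including the sub-case $\zeta'\geq\check\eta$, which requires the analysis of $\gt^*(\eta)$; your plan relegates exactly this to ``routine case-checking''.

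A second concrete gap: your reduction ``$\xi<\Om$, so $\xi\in\mathbb X'$ by Proposition \ref{propNotFix}'' is false for $\mathbb X=\mathbb P$. A countable limit $\xi$ may fail to be in $\mathbb P$ at all (e.g.\ $\xi=\om^2+\om$), and it may lie in $\mathbb P\setminus\mathbb P'$ (e.g.\ $\xi=\om=\vartheta(1)$): the proposition concerns whether the \emph{argument} of $\gt$ lies in ${\rm JUMP}(\mathbb X)$, not $\xi$ itself. The first omission is routine, but the second is not: there $\cfs\xi n=\vartheta^*(\al)\cdot n$, and the proof must use $\|\zeta\|\leq n$ to control the number of summands of $\zeta$ (write $\zeta=\om^\ga m+\de$ in Cantor normal form, bound $\om^\ga m$ by $\vartheta^*(\xi)\cdot(n-1)$ via the smaller norm, then use additive indecomposability of $\vartheta^*(\xi)$). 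Nothing in your sketch, which only reduces $\zeta$ to the indecomposable case, covers this multiplicative clause of the fundamental sequences.
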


\begin{proof}
By induction on $(\xi^*,\xi,\zeta^*,\zeta)$, ordered lexicographically. Note that $\zeta<\xi$ implies that $\xi\neq 0$ and the claim is trivial for $\zeta=0$, so we assume otherwise.
Note also that norms are positive, so $n>0$.
Consider several cases according to the shape of $\xi$.

\begin{Cases}

\item ($\xi<\Om $).
This case sub-divides into further sub-cases.

\begin{Cases}

\item ($\xi\notin \mathbb X$).
Then, $\mathbb X=\mathbb P$ and $\xi$ is additively decomposable.
Write $\xi = \om^\al+\be$ and $\zeta =  \om^\ga+\de $ in Cantor normal form.
If $\ga<\al$, then it is readily checked that $\zeta< \cfs\xi 0 $, so we may assume that $\ga=\al$.
But then, the induction hypothesis yields $\be<\cfs\de n$, so that $\zeta<\cfs\xi n$.

\item ($\xi\in\mathbb X \setminus  \mathbb X'$).
Then, $\mathbb X=\mathbb P$ and $\xi=\gt(\al)$, with $\al<\Om$ and $\al\in {\rm JUMP}(\mathbb P)$.
In this case, $\cfs \xi n = \vartheta^*(\al)\cdot n$.
Consider the following cases.
\begin{Cases}

\item ($\zeta \in \mathbb P$).
Write $\zeta=\vartheta(\beta)$.
From $\zeta<\xi$ we obtain $\zeta< \vartheta^*(\al)\cdot m$ for some $m$, but $\zeta$ is additively indecomposable, so $\zeta\leq \vartheta^*(\al) =\cfs\xi 1$.

\item ($\zeta\notin\mathbb P$).
Write $\zeta=\om^\ga m +\de$ in Cantor normal form with $\de<\om^\ga$.
We have that $  \|\zeta\| \geq \|\om^\ga m\|+1$, so that $\|\om^\ga m\|\leq n-1$ and the induction hypothesis yields $\om^\ga m \leq \vartheta ^*(\xi) \cdot (n-1)$; since $\om^\ga$ is additively indecomposable, this implies that $\om^\ga \leq \vartheta ^*(\xi) $.
If $\om^\ga < \vartheta ^*(\xi) $, then also $\zeta < \vartheta^*(\xi) $, since $\vartheta ^*(\xi)$ is additively indecomposable. Otherwise, we have that $\de < \om^\ga = \vartheta^*(\xi)$ and $\zeta =\om^\ga m+\de < \vartheta^*(\xi) \cdot (n-1) + \vartheta^*(\xi)  = \cfs\xi n$.
\end{Cases}

\item ($\xi=\gt(\al) \in \mathbb X'$).
In this case, we may assume that $\zeta = \gt(\beta)$: this is always the case for $\mathbb X=1+\sf Ord$, and for $\mathbb X=\mathbb P$, we have that $\cfs\xi n$ is additively indecomposable, hence $\ga+\de< \cfs\xi n$ if and only if $\ga<\cfs \xi n$ and $\de <\cfs \xi n$, so we may assume that $\zeta$ is additively indecomposable as well.
With this in mind, consider the following cases.
\begin{Cases}

\item ($\zeta = \gt(\beta)$ with $\be<\al$).
We divide into further sub-cases.
\begin{Cases}

\item ($\beta\geq \check \al$).
We have that $\be = \check\al+\de$ and $\al=\check \al+ \theta$ with $\de<\theta \leq \gt^*(\al)$.
Then, $\de< \gt^*(\al)$, hence $ \gt^*(\al) \neq 0$ and $ \gt^*(\al) = \gt(\tilde\al)$ for some $\tilde\al$ with $\tilde\al+1\geq \al$, from which it follows that $\tilde\al \geq \be$.
We moreover see that $\be^* \leq \al^* \leq \gt(\tilde\al)$, and if $\be^* = \al^* $, then $\theta \leq \check \al^*$ (otherwise, $\al^* = \theta>\max\{\check \al^*,\de\}=\be^*$), which by definition of $\gt^*$ means that $\al$ must be a successor and thus $\al^* < \gt(\tilde\al)$.
In any case $\be^* < \gt(\tilde\al)$, which together with $\be\leq \tilde \al$ implies that $\gt(\be) \leq \gt^*( \al) $.
In either case,  $\gt(\be) \leq \gt^*( \al) <\cfs\al n$.

\item ($\beta< \check \al$).
Note that the induction hypothesis yields $\be^* \leq \cfs \xi {n-1}$.
Define $\tau:=\tau(\check \al)$ and consider the following cases.

\begin{Cases}

\item ($\tau < \Om $).
Then, $\cfs\xi n= \gt \big ( \cfs{\check \al}n + \gt^*(\al ) \big )$ and $\beta <\cfs{\check \al} n $ by the induction hypothesis.
Since
$\beta^* < \cfs \xi n$,
\[\zeta=\gt(\beta)< \gt \big ( \cfs{\check \al}n + \gt ^*(\al) \big ) = \cfs\xi n.\]

\item ($\tau=\Om$).
If $\be<\Om$, the induction hypothesis implies that $\beta \leq \cfs\xi {n-1} \leq \fs{\check\al}{\cfs\xi {n-1} } ^* $, hence
\[\zeta=\gt(\beta)<\gt(\fs{\check\al}{\cfs\xi {n-1} }) =\cfs\xi n.\]
Otherwise, $\|\be^*\|<\|\be\|< \| \zeta \| $, so $\|\be^*\|\leq \|\zeta\|-2$.
The induction hypothesis yields $\beta^*\leq \cfs\xi {n-2} < \cfs\xi {n-1}$, and by Lemma \ref{lemmBoundMC}, $\beta < \fs{\check\al} {\cfs\xi{n-1}} $.
It follows that
\[\zeta=\gt(\beta) < \gt\big ( \fs{\check\al} {\cfs\xi{n-1}}\big ) = \cfs\xi n.\]

\end{Cases}
\end{Cases}

\item ($\zeta =\gt(\be)$ with $\be>\al$).
By Lemma \ref{lemmBackwardsExt}, $\zeta\leq \cfs\xi 1 \leq \cfs \xi n$.

\end{Cases}
\end{Cases}

\item ($\xi\geq \Om$).
Write $\zeta= \Om^{\zeta_0}\zeta_1 + \zeta_2$ and $\xi= \Om^{\xi_0}\xi_1+\xi_2$ in $\Om$-normal form, let $\tau=\tau(\xi)$, and consider the following cases.
\begin{Cases}
\item ($\xi_2>0$).
Then if $\Om^{\zeta_0}\zeta_1 < \Om^{\xi_0}\xi_1$, we have $\zeta<\fs\xi{\cfs\tau n} =\cfs\xi n$ regardless of $\cfs\tau n$, so we assume $\Om^{\zeta_0}\zeta_1 = \Om^{\xi_0}\xi_1$.
But then we may apply the induction hypothesis, since $\zeta< \xi $ yields $\zeta_2< \xi_2 $ and thus $\zeta_2 \leq\cfs{\xi_2}{n}$, so that $\zeta< \Om^{\xi_0}\xi_1+\cfs{\xi_2}{n} = \cfs\xi{ n}$.

\item ($\xi_2 = 0$).
We divide into the following sub-cases.
\begin{Cases}

\item ($\xi_1 \in \rm Lim$).
Then, $\fs\xi{\delta} = \Om^{\xi_0}\delta$ for all $\delta$.
If $\zeta_0<\xi_0$, we automatically have $\zeta< \Om^{\xi_0}\cfs\tau n  =\cfs\xi n$.
So, assume that $\zeta_0=\xi_0$.
In this case, we have that $\zeta_1< \tau$, so that the induction hypothesis and the fact that $\|\zeta_1\|<n$ yield $\zeta_1\leq \cfs\tau {n-1}$, hence $\zeta_1+1 \leq \cfs\tau n$.
Since $\zeta_2<\Om^{\zeta_0}$, we have that
\[ \zeta=  \Om^{\zeta_0}\zeta_1 + \zeta_2 < \Om^{\zeta_0}(\zeta_1 + 1) \leq \Om^{\xi_0}\cfs\tau n = \fs\xi{\cfs\tau n} =\cfs\xi n .\]

\item ($\xi_1 =\eta+1$).
Note that this includes the case where $\xi_1=1$.
Since $\tau(\xi)<\Om$, we must have $\xi_0\in \rm Lim$.
In this case, $\fs\xi{\delta} = \Om^{\xi_0}\eta + \Om^{\fs{\xi_0}\delta}$ for all $\delta$.
Consider the following sub-cases.

\begin{Cases}

\item ($\zeta_0< {\xi_0} $).
In this case, the induction hypothesis yields $\zeta_0< \cfs{\xi_0}{  n}$, which in turn yields
$ \zeta < \Om^{\cfs{\xi_0}{ n}} \leq \cfs\xi{  n}$, since $\Om^{\cfs{\xi_0}{  n}}$ is additively indecomposable.

\item ($\zeta_0 \geq   {\xi_0} $).
Note that $\zeta_0 >  {\xi_0} $ implies that $\zeta>\xi$, so we must have $\zeta_0 =  {\xi_0} $, and similarly, $\eta>0 $.
We consider further sub-cases.
\begin{Cases}

\item ($\zeta_1<\eta$).
Then, $\zeta_2< \Om^{\zeta_0}$ yields
\[\zeta<\Om^{\zeta_0} (\zeta_1+1) \leq  \Om^{\xi_0}\eta <\cfs\xi{ n}.\]

\item ($\zeta_1 = \eta$).
Since $\zeta_2 < \Om^{\xi_0} $, the induction hypothesis yields $\zeta_2 <  \cfs{\Om^{\xi_0}}{  n} =  \Om^{\cfs{\xi_0}{  n }}$.
We conclude that
\[\zeta= \Om^{\xi_0}\eta+\zeta_2 < \Om^{\xi_0}\eta + \Om^{\cfs{\xi_0}{ n}} = \cfs \xi n.\]

\end{Cases}

\end{Cases}

\end{Cases}

\end{Cases}

\end{Cases}
We have now considered all cases, and conclude the proof.
\end{proof}

We are now ready to state our main technical result.

\begin{theorem}\label{theoMain}\
\begin{enumerate}

\item $(\vartheta(\ve_{\Om+1}),\cfs\cdot\cdot_\vartheta,\|\cdot\|_\vartheta)$ is a regular Cantorian Bachmann system.

\item $(\vartheta(\ve_{\Om+1}),\cfs\cdot\cdot_\nt,\|\cdot\|_\nt)$ is a regular Bachmann system.

\end{enumerate}

\end{theorem}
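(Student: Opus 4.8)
The plan is to read off each clause in the definition of a regular (Cantorian) Bachmann system from the results already established, so that the only real work lies in checking that their hypotheses apply to $\Lambda=\vartheta(\ve_{\Om+1})$ and to $\Lambda=\nt(\ve_{\Om+1})$.

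First I would record two routine facts: every $\la<\gt(\ve_{\Om+1})$ lies in $\constr$ (as $\la<\Om$ gives $\la^*=\la<\gt(\ve_{\Om+1})$), and every $\la<\Om$ satisfies $\tau(\la)<\Om$ (for a limit $\tau(\la)=\la$, for a successor $\tau(\la)=1$). Hence the Buchholz systems $\cfs\cdot\cdot_\vartheta$ and $\cfs\cdot\cdot_\nt$ do restrict to maps $\Lambda\times\mathbb N\to\Lambda$, and the fundamental-sequence axioms (B1) are precisely the content of Theorem \ref{theoIsFun}, instantiated with $\mathbb X=\mathbb P$ and with $\mathbb X=1+\sf Ord$ respectively. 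The Bachmann property (B2) is then Theorem \ref{theoBach}, whose hypothesis $\tau(\al)<\Om$ is automatic here; the case where $\la$ is a successor is vacuous, and reconciling the literal ``$x<\om$'' of (B2) with the ``$x>0$'' of Theorem \ref{theoBach} is the harmless reindexing pointed out in the Remark following the definition of regular Bachmann systems, the remaining smallest-index instances being handled by the extended statement proved inside Theorem \ref{theoBach}.

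Next I would deal with the normed clauses. Since $\{0\}^\Om_{1+\sf Ord}$ and $\{0,+\}^\Om_{\mathbb P}$ are finite families of functions, for each $n$ there are only finitely many closed terms of norm at most $n$, so $\|\cdot\|_\nt$ and $\|\cdot\|_\vartheta$ are norms in the abstract sense, and they are finite on all of $\Lambda$ by Theorem \ref{theoNotation} (equivalently Corollary \ref{corNotation}). Regularity, $\eta<\la\Rightarrow\eta\leq\cfs\la{\|\eta\|}$, is Theorem \ref{theoReg}: it applies whenever $\la<\Lambda$ is a limit, since then $\tau(\la)<\Om$ and $\la\in\constr$, and is trivial when $\la$ is a successor. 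At this point item 2 is established in full, since the $\nt$ system is not claimed to be Cantorian (and is not, by the remark following Lemma \ref{lemCantorOne}).

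For item 1 it remains to verify the Cantorian conditions. Parts (a) and (b) are exactly Lemma \ref{lemCantorOne}. For part (c), Lemma \ref{lemCantorTwo} yields $a\leq\|\om^{\al_1}+\ldots+\om^{\al_n}\|_\vartheta=O(a^2)$, so $h$ may be taken to be an explicit quadratic; the remaining inequalities $m<\|\om^m\|_\vartheta\leq h(m)$ follow from $\vartheta(m)=\om^m$ together with the bound $m<\|m\|_\vartheta$, which in turn comes from the observation that no finite ordinal exceeding $1$ is a $\vartheta$-value or additively decomposable below itself, so its cheapest term is $\vartheta(0)+\ldots+\vartheta(0)$. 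I do not expect any genuine obstacle; the one point that needs a little care is confirming that the statement of (B2) is indeed subsumed by Theorem \ref{theoBach} at the base index and that no ordinal below $\Lambda$ is excluded from the domain $\{\xi\in\constr:\tau(\xi)<\Om\}$ on which the Buchholz systems were defined.
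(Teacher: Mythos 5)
Your proposal is correct and follows essentially the same route as the paper, which likewise obtains Theorem \ref{theoMain} by assembling Theorem \ref{theoIsFun} (resp.\ the definition of the Buchholz systems) for (B1), Theorem \ref{theoBach} for the Bachmann property, Theorem \ref{theoReg} for regularity, and Lemmas \ref{lemCantorOne} and \ref{lemCantorTwo} for the Cantorian clauses. Your extra checks --- that every $\la<\gt(\ve_{\Om+1})$ lies in $\constr$ with $\tau(\la)<\Om$, that the norms are finite and genuine norms, and the reindexing of (B2) --- are exactly the routine hypothesis verifications the paper leaves implicit.
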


\begin{proof}
All required properties have been proven in the text (Lemmas \ref{lemCantorOne} and \ref{lemCantorTwo}, Theorem \ref{theoBach}, Theorem \ref{theoReg}).
\end{proof}

\section{The Hardy hierarchy}\label{sectHardy}

If $\Lambda$ is a countable ordinal, a Hardy hierarchy for $\Lambda$ is a family of functions $(H_\lambda)_{\lambda<\Lambda}$ on the natural numbers, where $H_\lambda(n)$ is intended to be increasing both on $n$ and, asymptotically, on $\lambda$.
As mentioned in the introduction, these functions are useful for majorizing the provably total computable functions of a theory $T$; for example, every provably total computable function of Peano arithmetic is eventually bounded by $H_{\ve_0}$.
However, this hierarchy is not determined by the order-type of $\ve_0$ alone: it requires a `natural' notation system, as well as fundamental sequences, for $\ve_0$.
More generally, Hardy hierarchies are defined with respect to systems of fundamental sequences.

\begin{definition}
Let $(\Lambda,\cfs\cdot\cdot)$ be a system of fundamental sequences.
We define the {\em Hardy hierarchy based on $\Lambda$} as the function
\[H_\cdot(\cdot)\colon \Lambda \times\mathbb N\to \mathbb N\]
defined recursively by
\begin{enumerate}

\item $H_0(n) = n$,

\item $H_{\al+1}(n) = H_{\al}(n+1)$, and

\item $H_{\al}(n) = H_{\cfs\al n}(n )$ for $\al\in \rm Lim$.

\end{enumerate}
\end{definition}

For $\gt \in \{\nt,\vartheta\}$, we denote by $H^{\gt}$ the Hardy functions based on the system $\big (\vt(\ve_{\Om+1}),\cfs\cdot\cdot_\gt\big )$.
The following is proven for any regular Bachmann system in \cite{BCW}, hence by Theorem \ref{theoMain}, it holds for $H^\nt$ and $H^\vartheta$.

\begin{theorem}
For $\gt \in \{\nt,\vartheta\}$:
\begin{enumerate}

\item If $n<m$ and $\al<\vartheta(\ve_{\Om+1})$, then $H^\gt_\al(n) < H^\gt_\al(m)$.

\item If $\al<\be$ and $n\geq \|\al\|$, then $H^\gt_\al(n) \leq H^\gt_\be(n)$.

\end{enumerate}
\end{theorem}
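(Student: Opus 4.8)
The plan is to reduce the statement to the general theory of Hardy hierarchies over regular Bachmann systems rather than to argue from scratch. By Theorem~\ref{theoMain}, both $(\vartheta(\ve_{\Om+1}),\cfs\cdot\cdot_\vartheta,\|\cdot\|_\vartheta)$ and $(\vartheta(\ve_{\Om+1}),\cfs\cdot\cdot_\nt,\|\cdot\|_\nt)$ are regular Bachmann systems. Strict monotonicity of the Hardy functions in the numerical argument, and their monotonicity in the index under the norm hypothesis, are established for an arbitrary regular Bachmann system in \cite{BCW}; instantiating that result with $\gt\in\{\nt,\vartheta\}$ yields both items simultaneously, so the proof is a one-line appeal to Theorem~\ref{theoMain} together with \cite{BCW}.

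For completeness I would recall the shape of the direct argument, which is a simultaneous transfinite induction on the index ordinal establishing (i) $n<m\imp H_\al(n)<H_\al(m)$ and (ii) $\al<\be$ together with $n\geq\|\al\|$ (and $n\leq m$) implies $H_\al(n)\leq H_\be(m)$. The base and successor cases of (i) are immediate from $H_0(n)=n$ and $H_{\al+1}(n)=H_\al(n+1)$ using the induction hypothesis on the smaller ordinal. The limit case of (i) unfolds $H_\al(n)=H_{\cfs\al n}(n)$ and $H_\al(m)=H_{\cfs\al m}(m)$; since $\cfs\al n<\al$, the hypothesis gives $H_{\cfs\al n}(n)<H_{\cfs\al n}(m)$, and then (ii) applied to $\cfs\al n\leq\cfs\al m$ — whose applicability needs $m\geq\|\cfs\al n\|$, which is where the norm enters — upgrades this to $H_{\cfs\al n}(m)\leq H_{\cfs\al m}(m)=H_\al(m)$. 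For (ii), the decisive step is the limit case of $\be$: one descends from $\be$ along $(\cfs\be k)_k$ until the index first exceeds $\al$, and at that point the Bachmann property ($\cfs\be x<\eta<\be\imp\cfs\be x\leq\cfs\eta 1$), combined with the regularity inequality $\eta\leq\cfs\al{\|\eta\|}$, controls how the two fundamental-sequence paths interleave and lets the induction close.

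The main obstacle — and the reason (i) and (ii) cannot be proved in isolation — is precisely the limit case in which the index ordinal and the numerical argument both change, where one must bridge $H_{\cfs\al n}(n)$ and $H_{\cfs\be m}(m)$. Carrying the norm condition $n\geq\|\al\|$ through the recursion and invoking the Bachmann property at the right moment is the only delicate point. Since the excerpt has already carried out this argument abstractly in \cite{BCW} and verified its hypotheses in Theorem~\ref{theoMain}, here it is enough to quote those two results.
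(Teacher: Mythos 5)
Your proposal takes essentially the same route as the paper: the paper's entire proof of this theorem is the observation that the two monotonicity properties hold for any regular Bachmann system by \cite{BCW}, and that Theorem~\ref{theoMain} certifies both $(\vartheta(\ve_{\Om+1}),\cfs\cdot\cdot_\vartheta,\|\cdot\|_\vartheta)$ and $(\vartheta(\ve_{\Om+1}),\cfs\cdot\cdot_\nt,\|\cdot\|_\nt)$ as such systems. Your supplementary sketch of the simultaneous induction (with the Bachmann property and the norm condition entering in the limit case) is a reasonable account of what happens inside \cite{BCW}, but it is not needed here, just as the paper does not reproduce it.
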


There are various other regularity properties that hold for the Hardy hierarchies of all (Cantorian) regular Bachmann systems, for which we refer the reader to \cite{BCW}.
We also obtain the following comparison of the two hierarchies.

\begin{lemma}\label{lemmHardyCompare}
If $\xi<\vartheta(\ve_{\Om+1})$ and $0<n<m$, then $H_\xi^\vartheta(n) < H_\xi^\nt(m)$.
\end{lemma}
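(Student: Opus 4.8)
The plan is to prove the stronger pointwise claim that for every $\xi<\vartheta(\ve_{\Om+1})$ and every $n>0$ we have $\cfs\xi n_\nt \geq \cfs{\tv\xi} n_\vartheta$ at the level of fundamental sequences, combined with the key ordinal-theoretic fact established in Theorem \ref{theoLeqTV}: namely $\xi = \tv\xi$ and $\cfs{\tv\xi}m_\vartheta \leq \cfs\xi{m+1}_\nt$. From the latter we already know the two notation systems represent the same ordinals, so it makes sense to compare $H^\vartheta_\xi$ and $H^\nt_\xi$ directly. I would prove the statement by induction on $\xi$, with an inner induction on the pair $(n,m)$ used via the successor and limit clauses of the Hardy recursion.

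First I would handle the base case $\xi=0$: here $H^\vartheta_0(n)=n<m=H^\nt_0(m)$ since $n<m$, using only $0=\tv 0$. For the successor case $\xi=\eta+1$, both systems have $\cfs\xi\cdot = \eta$ (and $\tv{\eta+1}$ is again a successor with predecessor $\tv\eta$, by the normal-form analysis in the definition of $\tv\cdot$ together with $\xi=\tv\xi$), so $H^\vartheta_{\eta+1}(n) = H^\vartheta_\eta(n+1)$ and $H^\nt_{\eta+1}(m)=H^\nt_\eta(m+1)$, and the inductive hypothesis applied to $\eta<\xi$ with $n+1<m+1$ finishes the step. The heart of the argument is the limit case: for $\xi\in\rm Lim$ we have $H^\vartheta_\xi(n) = H^\vartheta_{\cfs\xi n_\vartheta}(n)$ and $H^\nt_\xi(m) = H^\nt_{\cfs\xi m_\nt}(m)$. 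Here I would use Theorem \ref{theoLeqTV}: since $\xi=\tv\xi$ and $0<n$, we get $\cfs\xi n_\vartheta = \cfs{\tv\xi}n_\vartheta \leq \cfs\xi{n+1}_\nt$. Because $n<m$, monotonicity of the $\nt$-fundamental sequences (Theorem \ref{theoIsFun}, property (B1)) gives $\cfs\xi{n+1}_\nt \leq \cfs\xi m_\nt$, so $\cfs\xi n_\vartheta \leq \cfs\xi m_\nt =: \mu < \xi$.

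Now there are two possibilities. If $\cfs\xi n_\vartheta = \mu$, then both Hardy values reduce to ordinal $\mu<\xi$ evaluated at $n$ and $m$ respectively, and the induction hypothesis applied to $\mu$ (noting $\mu = \tv\mu$ by Theorem \ref{theoLeqTV}) with $n<m$ gives $H^\vartheta_\mu(n) < H^\nt_\mu(m)$, as desired. If $\cfs\xi n_\vartheta < \mu$, then I would invoke the regular Bachmann / monotonicity properties of the Hardy hierarchy recorded just above this lemma (in particular the fact that $H^\vartheta_\al(n)\leq H^\vartheta_\be(k)$ whenever $\al<\be$ and $n\geq\|\al\|_\vartheta$, together with the comparison between the two hierarchies at the same ordinal which follows from $\xi=\tv\xi$). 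The slightly delicate point is to ensure the side condition $n\geq \|\cfs\xi n_\vartheta\|_\vartheta$ needed to climb from $\cfs\xi n_\vartheta$ up to $\mu$; this is exactly the regularity property (condition (B3)) supplied by Theorem \ref{theoMain}, since $\cfs\xi n_\vartheta < \xi$ and, after unwinding the definitions, its $\vartheta$-norm is at most $n$. With that in hand, $H^\vartheta_\xi(n) = H^\vartheta_{\cfs\xi n_\vartheta}(n) \leq H^\vartheta_\mu(n) < H^\nt_\mu(m) = H^\nt_\xi(m)$, where the middle strict inequality is the induction hypothesis for $\mu<\xi$ at arguments $n<m$.

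The main obstacle I anticipate is the bookkeeping around norms in the second sub-case: verifying that $\|\cfs\xi n_\vartheta\|_\vartheta \leq n$ so that the Hardy monotonicity lemma applies, and making sure the strictness is not lost when we pass through the intermediate ordinal $\mu$. A cleaner route, which I would try first, is to avoid the norm bookkeeping entirely by proving the stronger auxiliary statement $\cfs\xi n_\vartheta \leq \cfs\xi n_\nt$ for all $n>0$ directly (again by induction on $\xi$, using $\xi = \tv\xi$ and unwinding the Buchholz-system clauses for $\cfs\cdot\cdot_\vartheta$ versus $\cfs\cdot\cdot_\nt$), since then $\cfs\xi n_\vartheta \leq \cfs\xi n_\nt \leq \cfs\xi m_\nt$ and in the limit case one always lands on the \emph{same} reduction target only after one more unfolding, and the induction goes through transparently with just the raw Hardy recursion and property (B1). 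Either way the substantive input is Theorem \ref{theoLeqTV}; the rest is an induction on $\xi$ matching up the two Hardy recursions.
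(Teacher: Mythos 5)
Your overall skeleton (induction on $\xi$, Theorem \ref{theoLeqTV} to get $\cfs\xi n_\vartheta \leq \cfs\xi{n+1}_\nt \leq \cfs\xi m_\nt =: \mu$, then a case split on whether $\cfs\xi n_\vartheta = \mu$ or $\cfs\xi n_\vartheta < \mu$) is the same as the paper's, but the way you discharge the strict sub-case has a genuine gap. You need $H^\vartheta_{\cfs\xi n_\vartheta}(n) \leq H^\vartheta_\mu(n)$, and you propose to get it from the norm-based monotonicity of the Hardy hierarchy, claiming that the regularity condition of Theorem \ref{theoMain} gives $\|\cfs\xi n_\vartheta\|_\vartheta \leq n$. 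That is backwards: regularity says that $\eta<\la$ implies $\eta \leq \cfs\la{\|\eta\|}$, i.e.\ it bounds an ordinal by a fundamental-sequence value at its own norm; it gives no upper bound on the norm of $\cfs\la n$, and the bound you assert is simply false. Already for $\xi=\om^\om$ and $n=1$ we have $\cfs{\om^\om}1_\vartheta = \vartheta(1)=\om$, whose $\vartheta$-norm is $3>1$; in general $\|\cfs\xi n_\vartheta\|_\vartheta$ grows with $\|\xi\|_\vartheta$ and with $n$. The inequality you want is true, but its correct source is the Bachmann property, not norms. This is exactly how the paper proceeds: it strengthens the induction with the side condition ``if $\cfs\al n_\vartheta < \be \leq \al$ then $H^\vartheta_\al(n) < H^\nt_\be(m)$'', and verifies it by combining Theorem \ref{theoBach} (Bachmann property, giving $\cfs\al n_\vartheta \leq \cfs\be 1_\vartheta \leq \cfs\be n_\vartheta$) with Theorem \ref{theoLeqTV} (giving $\cfs\be n_\vartheta \leq \cfs\be m_\nt < \be$) and recursing; no norm bookkeeping is involved or available.

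Your proposed ``cleaner route'' does not repair this, because its auxiliary claim $\cfs\xi n_\vartheta \leq \cfs\xi n_\nt$ is false. For example, $\om^\om = \vartheta(\om) = \nt(\Om\cdot 2)$, and one computes $\cfs{\om^\om}n_\vartheta = \vartheta(n) = \om^n$, whereas $\cfs{\om^\om}n_\nt = \nt^{(n)}(\Om\cdot 2) = \om^{n-1}$ for $n\geq 1$ (the sequence $0,\om,\om^2,\dots$). So the index shift in Theorem \ref{theoLeqTV}, comparing $\cfs\cdot m_\vartheta$ with $\cfs\cdot{m+1}_\nt$, is unavoidable, and it is also the reason the lemma is stated with the strict hypothesis $n<m$ rather than $n\leq m$. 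In short: the substantive input you are missing is Theorem \ref{theoBach}, which the paper uses (through the side condition) precisely where you appeal to the norm estimate.
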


\begin{proof}
We prove the more general claim that if $n<m$ and $\cfs\xi n_\vartheta < \zeta \leq \xi$, then $H_\xi^\vartheta(n)<H_\zeta^\nt(m) $.
Proceed by induction on $\xi$ with a secondary induction on $\zeta$.
First assume that $\zeta=\xi$.
If $\xi=0$ the claim is immediate, and if $\xi $ is a successor then
\[H_\xi^\nt(m) = H^\nt_{\xi-1}(m+1) \stackrel{\text{\sc ih}} > H^\vartheta_{\xi-1}(n+1) = H^\vartheta_{\xi}(n) . \]
If $\xi \in \rm Lim$, we have by Theorem \ref{theoLeqTV} that $ \cfs\xi m_\nt \geq \cfs\xi n_\vartheta$.
If $ \cfs\xi m_\nt = \cfs\xi n_\vartheta$ we apply the main induction hypothesis to $\cfs\xi n_\vartheta$ and otherwise we apply the secondary induction hypothesis to  $\cfs\xi m_\nt  $ to also conclude that $H^\nt_{\cfs\xi m_\nt}(m)  > H^\vartheta_{\cfs \xi n_\vartheta }(n)$.

Otherwise, $\cfs\xi n_\vartheta < \zeta < \xi $.
In this case we must have $\xi\in\rm Lim$.
By the Bachmann property for $\vartheta$ and Theorem \ref{theoLeqTV}, we obtain
\[\cfs\xi n_\vartheta \leq \cfs\zeta n_\vartheta \leq \cfs\zeta m_\nt<\zeta \leq \xi.\]
Let $x=0$ if $\zeta\in \rm Lim$ and $x=1$ otherwise.
We may apply the secondary induction hypothesis to $\cfs\zeta m_\nt$ to conclude that $H^\nt_{\cfs\zeta m_\nt}(m+x)  > H^\vartheta_{\cfs \xi n}(n) $, and therefore
\[H_\zeta^\nt(m) = H^\nt_{\cfs\zeta m_\nt}(m+x)  > H^\vartheta_{\cfs \xi n}(n) = H^\vartheta_{\xi}(n).\qedhere\]
\end{proof}

As an easy corollary, we obtain that $H_\xi^\vartheta(n) < H_{\xi+1}^\nt(n)$ for all $\xi$ and $n>0$.
This version will be useful below.
Since $\big (\vartheta(\ve_{\Om+1}),\cfs\cdot\cdot_\vartheta \big )$ is a regular Cantorian Bachmann system, we can use $H^\vartheta$ to bound functions defined by recursion on $ \vartheta(\ve_{\Om+1})$.

\begin{definition}
Let $(\Lambda,\cfs\cdot\cdot,\|\cdot\|)$ be a normed system.
A {\em primitive recursive presentation of $\Lambda$} is a pair $(\mathcal A,\prec)$ such that $\mathcal A\subseteq\mathbb N$, both $\mathcal A$ and ${\prec}\subseteq \mathcal A\times \mathcal A $ are primitive recursively decidable, and there is a bijection $\ulcorner\cdot\urcorner \colon \Lambda\to \mathcal A$ such that
\begin{enumerate}
\item $\eta<\la$ if and only if $\ulcorner \eta\urcorner \prec \ulcorner \la\urcorner$,

\item if $\|\eta\|<\|\la\|$, then $\ulcorner \eta\urcorner < \ulcorner \la \urcorner $, and

\item there is a primitive recursive function $h$ with $\ulcorner\la\urcorner \leq h(\|\lambda\|)$ for all $\lambda<\Lambda$.
\end{enumerate}
\end{definition}

Note that the second item refers to the ordering on the natural numbers, {\em not} the ordering $\prec$ on $\mathcal A$; in other words, ordinals with larger norms have larger codes.
For example, for $\Lambda=\vartheta(\ve_{\Om+1})$, we can let $\ulcorner \la\urcorner$ be the G\"odel code of a term denoting $\la$ as given by Corollary \ref{corNotation}.
If we represent terms as strings using suitable base $k$, then we have that $\ulcorner\la\urcorner\leq k^{ \|\la\|}$.
The relation $\prec$ can be computed primitive recursively from Lemma \ref{lemmThetaOrd} and properties of Cantor normal forms.

The following class of functions is defined by~Cichon et al.~\cite{BCW}.
These functions are also similar (but not identical) to the {\em descent recursive} functions of Friedman and Sheard~\cite{FriedmanSheard}.

\begin{definition}
Given a well order $(\mathcal A,{\prec})$ with $\mathcal A\subseteq \mathbb N$, we define the set of functions $\mathcal R (\mathcal A, {\prec})$ as those functions $f\colon \mathbb N^n\to \mathbb N$ of the form
\[f(\vec x) = \min \{y: g(\vec x,y ) \preccurlyeq g(\vec x,y+1)   \},\]
where $g\colon \mathbb N^{n+1}\to \mathcal A$ is primitive recursive and such that there exists $a_* \in \mathcal A$ such that for all $\vec x\in \mathbb N^n$, $g(\vec x,0)\preccurlyeq a_*$.
\end{definition}

\begin{theorem}\label{theoH}
Let $\gt \in \{\vartheta,\nt\}$ and $(\mathcal B,\prec)$ be a primitive recursive presentation of $ \big( \vartheta(\ve_{\Om+1}),\cfs \cdot\cdot_\gt,\|\cdot\|_\gt \big )$.
Then, for every $f\in \mathcal R (\mathcal B,\prec)$, there is $\al < \vartheta(\ve_{\Om+1})$ such that $f(\vec x) < H^\Theta_\al(\max\vec x) $ for all $\vec x$.
\end{theorem}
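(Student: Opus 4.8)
\emph{Proof idea.} The plan is to reduce the statement to a descent-bounding lemma for regular Bachmann systems and then apply Theorem~\ref{theoMain}. Fix $f\in\mathcal R(\mathcal B,\prec)$, say $f(\vec x)=\min\{i: g(\vec x,i+1)\ge g(\vec x,i)\}$ with $g\colon\mathbb N^{n+1}\to\mathcal B$ primitive recursive and $g(\vec x,0)\le a_*$ for a fixed $a_*\in\mathcal B$. Identifying each element of $\mathcal B$ with the ordinal it codes, let $\be_*<\vartheta(\ve_{\Om+1})$ be the ordinal coded by $a_*$ and let $\al^{\vec x}_i$ be the ordinal coded by $g(\vec x,i)$. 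By the definition of $f$, the finite sequence $\al^{\vec x}_0>\al^{\vec x}_1>\cdots>\al^{\vec x}_{f(\vec x)}$ is strictly descending and $\al^{\vec x}_0\le\be_*$; hence $f(\vec x)$ lies strictly below the length of this descent, and it suffices to bound such lengths by a Hardy function.

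Next I would control the norms along the descent. Using that $g$ is primitive recursive together with the properties of the presentation $(\mathcal B,\prec)$ --- in particular, the norm-monotonicity clause makes the code-to-norm function monotone and, with the bound $h$, primitive recursive --- one obtains a fixed monotone primitive recursive $p$ with $\|\al^{\vec x}_i\|\le p(\max\vec x,\,i)$ for all $\vec x$ and $i$.

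The heart of the proof is then the descent lemma: in any regular Bachmann system $(\Lambda,\cfs\cdot\cdot,\|\cdot\|)$, a strictly descending sequence $\al_0>\al_1>\cdots>\al_K$ with $\al_0\le\be_*$ and $\|\al_i\|\le p(N,i)$ satisfies $K<H_{\be_*+\om^m}(N+c)$ for a finite $m$ and a constant $c$ depending only on $p$. I would prove this by transfinite induction along $\Lambda$. The successor and trivial cases are routine; in the limit case, regularity yields $\al_1\le\cfs{\al_0}{\|\al_1\|}$, the clause $H_{\al_0}(k)=H_{\cfs{\al_0}k}(k)$ allows restarting the Hardy computation at $\cfs{\al_0}k$, and the Bachmann property guarantees that this reduced ordinal is a legitimate new starting point for the remaining descent. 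The primitive recursive growth of $p$ is absorbed into the single finite jump $\om^m$ by the standard observation that primitive recursive functions lie below $\{H_\al:\al<\om^\om\}$. This step rests on Theorem~\ref{theoMain}, which guarantees that $\big(\vartheta(\ve_{\Om+1}),\cfs\cdot\cdot_\gt,\|\cdot\|_\gt\big)$ is a regular Bachmann system for both $\gt=\vartheta$ and $\gt=\nt$; otherwise it is an instance of the general machinery of \cite{BCW}.

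Combining the two previous steps gives $f(\vec x)<H_{\be_*+\om^m}(\max\vec x+c)$, and the elementary identity $H_{\de+c}(k)=H_\de(k+c)$ for finite $c$ yields $f(\vec x)<H_{\la}(\max\vec x)$ with $\la:=\be_*+\om^m+c$. Since $\vartheta(\ve_{\Om+1})$ is a principal number above $\om^\om$ --- it is the supremum of the increasing sequence of principal numbers $\vartheta(\Om_n)$, and $\vartheta(\Om)>\om^\om$ --- we get $\la<\vartheta(\ve_{\Om+1})$, as required. The main obstacle is the descent lemma, and within it the speed-up bookkeeping: converting the primitive recursive norm growth $p$ into one finite Hardy jump while keeping the transfinite induction and the interplay between regularity and the Bachmann property in the limit case under control.
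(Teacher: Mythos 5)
Your reduction of the theorem to a descent-bounding lemma is in essence a re-derivation of \cite[Theorem 2]{BCW}, and for $\gt=\vartheta$ it matches the paper's route, which simply cites that theorem together with Theorem~\ref{theoMain}. The gap is your claim that the descent lemma holds in \emph{any} regular Bachmann system, which is what lets you treat $\gt=\nt$ uniformly. The crucial ``absorption'' step --- converting the primitive recursive norm growth $p$ into a single jump $\om^m$ and bounding the descent by $H_{\be_*+\om^m}$ --- implicitly uses that $H$ decomposes over Cantor normal form sums (so that $H_{\be_*+\om^m}$ dominates the composition of $H_{\be_*}$ with the overhead) and that the Hardy functions indexed below $\om^\om$ majorize the primitive recursive functions in the standard way. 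Both of these rest on the Cantorian clauses $\al+\cfs\be n=\cfs{(\al+\be)}n$ and $\om^m\cdot n=\cfs{\om^{m+1}}n$ together with the norm clause, i.e.\ on Lemmas~\ref{lemCantorOne} and~\ref{lemCantorTwo}. These hold for $\cfs\cdot\cdot_\vartheta$, but the paper explicitly remarks that Lemma~\ref{lemCantorOne} fails for $\nt$, and \cite[Theorem 2]{BCW} is stated only for regular \emph{Cantorian} Bachmann systems; Theorem~\ref{theoMain} gives mere regularity for the $\nt$ system. So for $\gt=\nt$ neither your descent lemma as stated nor your fallback appeal to ``the general machinery of \cite{BCW}'' is covered by anything proved or cited, and this is exactly the case where extra work is needed.

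The paper closes this gap differently: it proves the statement for $\gt=\vartheta$ by the citation above, and then transfers it to $\gt=\nt$ via Lemma~\ref{lemmHardyCompare}, which yields that each $H^\vartheta_\la$ is majorized by $H^\nt_{\la+1}$, so a bound $f(\vec x)<H^\vartheta_\la(\max \vec x)$ immediately gives $f(\vec x)<H^\nt_{\la+1}(\max\vec x)$. To repair your argument you would either have to insert such a comparison step, or actually prove your descent lemma without the Cantorian identities --- a genuinely new claim that cannot be waved at \cite{BCW}. The remaining ingredients of your sketch (reading off the descending sequence of coded ordinals from $g$, using regularity $\al_{i+1}\le\cfs{\al_i}{\|\al_{i+1}\|}$ together with the Bachmann property in the limit case, and noting that $\vartheta(\ve_{\Om+1})$ is closed under the operations forming $\la$) are in line with how such bounds are proved and pose no problem.
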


\begin{proof}
This is shown to hold for all regular Cantorian Bachmann systems in \cite[Theorem 2]{BCW}, hence by Theorem \ref{theoMain}, it is true of $\gt =\vartheta$.
Lemma \ref{lemmHardyCompare} implies that every function $H^\vartheta_\la$ is bounded by $H^\nt_{\la+1}$, so the theorem holds for $\gt=\nt$ as well.
\end{proof}

\section*{Concluding remarks}\label{secConc}

We have shown that Buchholz's system of fundamental sequences for $\vartheta$, as well as its variant, $\nt$, enjoy some elemental properties useful in proving that the associated Hardy hierarchy is well-behaved.
Although we have not focused on formal theories in this work, Theorem \ref{theoH} yields independence results for many theories of Bachmann-Howard strength, incluing the theory ${\sf ID}_1$ of non-iterated inductive definitions; it is known that all provably total computable functions are defined by recursion along $\vartheta(\ve_{\Om+1})$~\cite{Eguchi}, hence they are majorized by both $H^\vartheta_{ \vartheta(\ve_{\Om+1})}$ and $H^\nt_{ \vartheta(\ve_{\Om+1})}$.
Conversely, any computable function that grows faster than this is {\em not} provably total.
This provides a strategy for establishing the independence of $\Pi^0_2$ statements from such theories, and indeed the authors have used it to develop a Goodstein process independent of ${\sf ID}_1$~\cite{GoodsteinFast}.
We trust that this work will be useful in establishing many new independence results in this spirit.

\subsection*{Acknowledgements}

The authors wish to thank the anonymous referees for their useful comments, including various errors found in a previous version of this article.
This work was partially supported by the FWO-FWF Lead Agency Grant G030620N.

\bibliographystyle{plain}
\bibliography{biblio}

\end{document}